\numberwithin{equation}{section}
\numberwithin{equation}{subsection}
\theoremstyle{plain}
\newtheorem{theorem}[equation]{Theorem}
\newtheorem{lemma}[equation]{Lemma}
\newtheorem{proposition}[equation]{Proposition}
\newtheorem{corollary}[equation]{Corollary}
\theoremstyle{definition}
\newtheorem{example}[equation]{Example}
\newtheorem{remark}[equation]{Remark}
\newtheorem{definition}[equation]{Definition}
\newtheorem{bekezdes}[equation]{}
\numberwithin{equation}{section}
\numberwithin{equation}{subsection}
\DeclareMathOperator{\Hom}{{\rm Hom}}
\DeclareMathOperator{\rank}{{\rm rank}}
\newcommand{\et}{{\mathcal T}}
\newcommand{\bH}{{\mathbb H}}
\def\C{\mathbb C}
\def\Q{\mathbb Q}
\def\R{\mathbb R}
\def\Z{\mathbb Z}
\def\bS{\mathbb S}
\newcommand{\cale}{{\mathcal E}}
\newcommand{\calt}{{\mathcal T}}
\newcommand{\calj}{{\mathcal J}}
\newcommand{\ocalj}{\overline{{\mathcal J}}}\newcommand{\scalj}{{\mathcal J}^*}
\newcommand{\calQ}{{\mathcal Q}}
\newcommand{\calF}{{\mathcal F}}
\newcommand{\calC}{{\mathcal C}}
\newcommand{\calS}{{\mathcal S}}\newcommand{\cals}{{\bf s}}\newcommand{\calst}{{\bf st}}
\newcommand{\frs}{{\frak s}}\newcommand{\frI}{{\frak I}}
\newcommand{\ocalS}{\overline{{\mathcal S}}}
\newcommand{\calH}{\mathcal{H}}
\newcommand{\calP}{\mathcal{P}}
\newcommand{\K}{k_{can}}
\newcommand{\labelpar}{\label}
\newcommand{\m}{\mathfrak{m}}
\newcommand{\vasi}{\mathbf{i}}
\newcommand{\vast}{\mathbf{t}}
\newcommand{\lk}{l'_{[k]}}
\newcommand{\ii}{({\bf i},\overline{I})}
\newcommand{\iij}{({\bf i},\overline{J})}
\newcommand{\bt}{{\bf t}}
\newcommand{\frsw}{\mathfrak{sw}}
\title{Reduction theorem for lattice cohomology}
\author{Tam\'as L\'aszl\'o}
\address{Central European University, Budapest and A. R\'enyi Institute of Mathematics, 1053 Budapest,
Re\'altanoda u. 13-15,  Hungary.}
\email{laszlo.tamas@renyi.mta.hu}
\thanks{The first author is supported by the PhD program of the CEU, Budapest and by the `Lend\"ulet' and ERC program `LTDBud' at R\'enyi
Institute. The second author is partially supported by OTKA Grant 100796.}
\author{Andr\'as N\'emethi}
\address{A. R\'enyi Institute of Mathematics, 1053 Budapest, Re\'altanoda u. 13-15, Hungary.}
\email{nemethi.andras@renyi.mta.hu}
\keywords{normal surface singularities, links of singularities,
plumbing graphs, $\Q$--homology spheres, lattice cohomology,
Heegaard--Floer homology, Seiberg--Witten invariant, Poincar\'e series}
\subjclass[2010]{Primary. 32S05, 32S25, 32S50, 57M27,
Secondary. 14Bxx,  32Sxx, 57R57, 55N35}
\date{}
\begin{document}

\maketitle


\pagestyle{myheadings} \markboth{{\normalsize
T. L\'aszl\'o and A. N\'emethi}}{ {\normalsize Reduction theorem for lattice cohomology}}

\begin{abstract}
The lattice cohomology of a plumbed 3--manifold $M$ associated with a connected negative definite plumbing graph is an important tool in the study of topological properties of $M$ and in the
comparison of the topological properties with analytic ones, whenever $M$ is realized as complex analytic
singularity link. By definition, its computation is based on the (Riemann--Roch)
weights of the lattice points of
$\Z^s$, where $s$ is the number of vertices of the plumbing graph.
The present article reduces the rank of this lattice to the number of `bad' vertices of the graph.
(Usually the geometry/topology of $M$ is codified exactly by these `bad' vertices
via surgery or other constructions. Their number  measures how far is the plumbing graph from a rational one,
or, how far is $M$ from an $L$--space.)

The effect of the reduction appears also at the level of certain  multivariable
(topological Poincar\'e) series
 as well. Since from these series one can also read the   Seiberg--Witten invariants,
 the Reduction Theorem provides new  formulae for these invariants too.

The reduction also implies the vanishing $\bH^q=0$ of the lattice cohomology for $q\geq
\nu$, where $\nu$ is the number of `bad' vertices. (This bound is sharp.)
\end{abstract}

\section{Introduction}\labelpar{s:1}

\subsection{}
Let $M$ be a plumbed 3--manifold given by a connected
negative definite plumbing graph. It is well--known that $M$ can be considered as
the  link of a normal
surface singularity as well. In this article we will assume that $M$ is a rational homology sphere.

The second author  in \cite{OSZINV,Nlat} associated with such an $M$ (and any
fixed $spin^c$--structure $\frs$ of $M$)  a graded $\Z[U]$--module
$\bH^*(M,\frs)$, called the {\it lattice cohomology} of $M$. The construction was strongly influenced
by the {\it Artin--Laufer program} of normal surface singularities
(targeting topological characterization of certain
analytic invariants), cf. \cite{SWI,OSZINV,Nlat},
 and by the work of Ozsv\'ath and Szab\'o on  {\it Heegaard--Floer theory}, especially \cite{OSzP}
(see also their long list of papers in the subject).

The lattice cohomology  is purely combinatorial.  Conjecturally it
contains all the information about the Heegaard--Floer homology of $M$,  cf. \cite{Nlat}.
 (The conjecture  was verified for several families, cf. \cite{OSZINV,NR,OSSz1,OSSz3}.)
Recently  Ozsv\'ath,  Stipsicz and Szab\'o in \cite{OSSz1} established  a
spectral sequence starting form the lattice cohomology and converging to the Heegaard--Floer
homology. Moreover, they considered the relative version (for knots in $M$) as well \cite{OSSz2,OSSz3}.
A different version of the relative lattice cohomology
associated with local plane curve singularities was identified
with the motivic Poincar\'e series of such germs  \cite{GN}.

Furthermore, in \cite{NSW} the second author proved
that the normalized Euler characteristic of the lattice cohomology
(similarly as of the
Heegaard--Floer homology) coincides with the normalized Seiberg--Witten invariant of the link $M$. This provides
a new combinatorial formula for the Seiberg--Witten invariants.

From the analytic point of view,
the ranks of the lattice cohomology modules and their Euler characteristic have subtle connection with
certain analytic invariants of analytic realizations of $M$ as singularity links
 \cite{OSZINV,Nlat,NSW}. At Euler characteristic level,  Nicolaescu and the second author
 predicted  the coincidence of the equivariant geometric
genus with the Seiberg--Witten invariants of the link (under certain restrictions on the singularity type).
This was proposed
as an extension of the Casson Invariant Conjecture of Neumann--Wahl \cite{NW},
formulated for germs with  integral homology sphere links.
The conjectured identities  were verified for important families of singularities, e.g. for
splice quotient singularities \cite{NO,BN}. The connections continue
at cohomology level as well.  For example,
the existence of the nontrivial higher lattice cohomologies explain conceptually
the failure in the pathological cases of the above `Seiberg--Witten invariant conjecture',
see \cite{SWI,SWII,SWIII,NO} and
\cite{LMN} for counterexamples.
For further details the reader is invited to
consult \cite{OSZINV,Nlat}.
\subsection{}
Usually, the explicit  computation of the lattice cohomology is very hard.
A priori, it is based on the computation of the weights of all lattice points (of a certain $\Z^s$)
and on the description of those `regions', where the weights are less than $N$
for any integer $N$.
The rank of the lattice which appears
in the construction is very  `large': it is the number of vertices of the corresponding
plumbing/resolution graph $G$ of $M$. (The weight  is provided by a Riemann--Roch formula.)

In order to decrease the computational complexity and also to establish the conceptual properties
of the lattice cohomology, one develops the theory in two directions. First, one finds (surgery)
exact sequences (proper to any cohomology theory), see e.g. \cite{Nexseq}.
Or, one tries to decrease the rank of the lattice and simplify the
graded cohomological complexes in such a way that the new presentation
contains essentially no superfluous data, focusing
exactly on the geometry of the 3--manifold. This is what we propose in the present article.

The main result is the  {\it Reduction Theorem} (for the precise form see  \ref{red}),
which reduces the rank of the lattice to $\nu$, the
number of `{\it bad}' vertices of the plumbing graph $G$.
(For the definition of `bad' vertices see \ref{ss:INTRNEW} in this introduction or \ref{ss:bv}.)
This number is definitely much smaller than the total number of vertices
(usually it is even smaller than the number of nodes of the graph). It
provides a `filtration' of  negative definite
plumbing graphs/manifolds,
 which measures how far the graph stays from a rational graph. (For more details see
 \ref{ss:INTRNEW}.)

\subsection{}\label{ss:INTRNEW} Let us explain the role of the Reduction Theorem by the following parallelism.
The following problem is very natural and important: for any CW complex $X$ find a
(minimal) sub--complex $K$ such that $K\subset X$ is a homotopy equivalence.
A modified cohomological version is the following. Fix a cohomology theory $H^*$, and let
$X$ as before. Then find a (minimal) sub-complex $i:K\hookrightarrow S$ such that $i^*:H^*(X)
\to H^*(K)$ is an ismorphism. Definitely, this procedure demands the understanding of the intrinsic properties of $X$.

In our case, we consider  the lattice cohomology $\bH^*$ which associates to
any lattice and weight function $(L,w)$ the module $\bH^*(L,w)$. The pair
$(L,w)$ will be  associated with a
plumbed 3--manifold $M$ (constructed from the graph whose intersection lattice is $L$)
and with a fixed $spin^c$--structure of $M$.
Our Reduction Theorem  finds a (minimal and functorial)
weighted sublattice $(\overline{L},\overline{w})$ with the same cohomology. Doing this we necessarily
find the essential geometric properties of the lattice from the point of view of $\bH^*$.
Here is the precise statement.

$\overline{L}$ is the lattice generated by the `bad' vertices.
Their definition is the following. A graph has no bad vertices if it is rational
(cf. \ref{ss:bv}, this property of singularity links can be compared with the property of
being an $L$-space in the sense of Heegaard Floer theory, cf. \cite{OSZINV}).
Otherwise, if one has to decrease the Euler number of (at most) $\nu$ vertices of the graph
to get a rational graph, we say that these vertices are the `bad' ones.
We will write $\overline{L}_{\geq 0}$ for the first quadrant of $\overline{L}$.
For any fixed $spin^c$--structure $\frs$, and for any
lattice point $\vasi\in\overline{L}_{\geq 0}$ we determine a
 very special universal point $x(\vasi)$ in $L$ (cf. \ref{bek:XI})
 and we set $\overline{w}(\vasi):=w(x(\vasi))$. Then the lattice cohomology of the
 pair $(M,\frs)$, $\bH^*(L,w)$, can be recovered by the isomorphism

 \vspace{2mm}

\noindent {\bf Reduction Theorem:} $\bH^*(L,w)=\bH^*(\overline{L}_{\geq 0},\overline{w})$.

\subsection{}\label{ss:INTR3}

We wish to emphasize again that the reduction to `bad' vertices is not just a technical procedure.
Usually, the key information about the structure of the 3--manifold is coded by them.
Let us support  this statement by some examples.

A  star--shaped graph (the plumbing graph of a Seifert 3--manifold)
has at most one bad vertex, namely the central one.
In this case, the sequence $w(x(i))$ ($i\in \Z_{\geq 0}$) can be determined from
the Seifert invariants, and these weights  are closely related
with Pinkham's computation in \cite{P} of the geometric genus
and of the Poincar\'e series of weighted homogeneous singularities
(the natural analytic realizations of Seifert manifolds as singularity links), see e.g. \cite{SWII,OSZINV}
or Example \ref{ex:Seifert} here.
As a consequence, the geometric genus coincides with the normalized Seiberg--Witten invariant
of the link.
In fact, the output of the Reduction Theorem at the level of  series (cf. \ref{in:zeta})
is exactly the Poincar\'e series associated with the analytic $\C^*$--action.

Another example: let $K$ be the connected sum of $\nu$ irreducible algebraic knots $\{K_i\}_{i=1}^\nu$
of $ S^3$. Consider the surgery 3--manifold $M=S^3_{-d}(K)$ ($d\in \Z_{> 0}$).
Then the minimal number of bad vertices is exactly $\nu$, and the weights $w(x(\vasi))$
are determined from the
semigroups of the knot components $K_i$ (see e.g.  \cite{NR},
where the Reduction Theorem  was already applied).

Even the `naive  case of all nodes' has strong consequences in certain  situations.
(If the graph is minimal good, then decreasing  the Euler numbers of all the nodes we get a minimal rational
graph, hence the set of nodes can be regarded as a set of bad vertices.)
Now, if  we consider the graph/link of a hypersurface singularity
with non-degenerate Newton principal part, then by toric resolution
the nodes correspond to the faces of the
Newton diagram. Hence, this choice of the bad vertices establishes the connection
with the combinatorics of the source toric object, the Newton diagram.

\subsection{}\label{in:zeta}
 The effects of the reduction appear not only at the level of the cohomology modules.
 The lattice cohomology has subtle connections with a certain  multivariable Poincar\'e series
 (defined combinatorially from the graph, which resonates and sometimes equals
the multivariable Poincar\'e series associated with the divisorial filtration indexed by all the divisors
in the resolution, provided by certain analytic realizations) \cite{NPS,NSW,NCL}.
For example, the Seiberg--Witten invariant appears as the `periodic constant' of this series
\cite{NSW,BN,NO}.
(We review these facts in Section~\ref{s:5}.)
The number of variables of this series is again the number of vertices of the plumbing graph.
One of the applications of the Reduction Theorem (and its proof) is that if we eliminate
all the variables except those  corresponding to the `bad' vertices, the new reduced series
still contains all the information about the Seiberg--Witten invariants, see Theorem \ref{thm:redZ}.

The reduction recovers
the vanishing of the reduced lattice cohomology for rational graphs,
proved in \cite[\S 4]{Nlat}. (This corresponds to the
$L$--space property of the link.) More generally, it
implies the vanishing
 $\bH^q(M)=0$ whenever $q\geq \nu$. An alternative
proof of this fact can be found in \cite{Nexseq}, based on
surgery exact sequences. This vanishing is sharp.
 Consider e.g.  the connected sum $K$ of $\nu$ copies of the $(2,3)$--torus knot,
and take the $(-d)$-surgery of the $3$--sphere $S^3$ along $K$, for some $d \in \Z_{>0}$. Then
the minimal number of bad vertices is $\nu$, and
 $\bH^{\nu-1}(S^3_{-d}(K))=\Z$ \cite{NR}.

The second author in \cite{OSZINV,Ng}
associated with $(L,w)$ a set of  graded roots as well (as a refinement of the
0--th order lattice cohomology $\bH^0(L,w)$). Without saying anything more about them,
we note that the proof of the Reduction Theorem guarantees that under the
reduction procedure the roots stay stable as well.

\subsection{}
The organization of the note is the following: Section~\ref{s:3} contains some generalities about the plumbing
graphs and reviews the construction and different interpretations of the lattice cohomology. The next section
defines the `special' cycles $x({\bf i})$, the family of `bad' vertices and provides several technical preliminary results
 about the {\it generalized
Laufer computation sequences}.
(For the original sequences introduced by Laufer, see \cite{Laufer72,Laufer77}.
The present generalizations have their origin
in \cite{OSZINV,Nlat}, where the case of  `almost rational graphs' was treated, i.e.  the $\nu$=1 case.)
At the end of this section we state the Reduction Theorem \ref{red}.
The proof is given in Section 4. It starts with several simplification steps.
The `original' and `reduced' cohomology groups are compared by a projection,
and the isomorphism is guaranteed
by a  Leray type argument, namely   by the fact,  that all the fiber of the projection are
non--empty and contractible. Even the proof of the non--emptiness is  rather hard. The contraction
is done in several steps, and is guided by  high generalizations of properties of computation sequences.
Section 5 contains the corresponding consequences regarding the Poincar\'e series and their
connection with the Seiberg--Witten invariants.

The last section contains a concrete explicit example.

\section{Review of the lattice cohomology}\labelpar{s:3}

\subsection{Generalities about plumbing graphs}\labelpar{be:gene}
We consider  a connected negative definite plumbing graph $G$. It  can be realized as
the resolution graph of some  normal surface singularity $(X,0)$, and the link $M$ of
$(X,0)$ can be considered as the
plumbed 3--manifold associated with $G$. In the sequel we assume that $M$ is a {\it
rational homology sphere},
or, equivalently,  $G$ is a tree and all the genus decorations are zero.
For more details regarding this section, see e.g. \cite{OSZINV,Nsur,Nsurrat,Nlat}.

Let $\widetilde{X}$ be the smooth 4--manifold with boundary $M$ obtained either by
plumbing disc bundles along $G$, or via a  resolution $\pi:\widetilde{X}\to X$ of
$(X,0)$ with resolution graph $G$. Then  $L=H_2(\widetilde{X},\Z)$ is generated by
$\{E_j\}_{j\in {\mathcal J}}$, the cores of the plumbing construction (or the
irreducible components of the exceptional divisor $E:=\pi^{-1}(0)$ of $\pi$).
$L$ is a lattice via the negative definite intersection form $\frI:=\{(E_j,E_i)\}_{j,i}$.
Let $L'$ be the dual lattice $\{l'\in L\otimes \Q\,:\, (l',L)\subseteq  \Z
\}$.  $L'$ is generated by the (anti)dual elements $E_j^*$ defined via
$(E_j^*,E_k)=-\delta_{jk}$ (the negative of the Kronecker symbol). Set $H:=L'/L$. Then $H_1(M,\Z)=H$.
Clearly, the
$E^*_j$ are the columns of $-\frI^{-1}$, and is  known
that
\begin{equation}\label{eq:POS}
\mbox{all the entries of $E^*_j$ are strict positive.}
\end{equation}

If  $l'_k=\sum_jl'_{kj}E_j$ for $k=1,2$, then we write
$\min\{l'_1,l'_2\}:= \sum_j\min\{l'_{1j},l'_{2j}\}E_j$, and  $l'_1\leq l'_2$ if
$l'_{1j}\leq l'_{2j}$ for all $j\in {\mathcal J}$.  Furthermore,
if $l'=\sum_jl'_jE_j$ then we set $|l'|:=\{j\in {\mathcal J}\,:\, l'_j\not=0\}$ for the
{\em support} of $l'$.

The set of {\it characteristic elements} are defined as
$$Char:=\{k\in L': \, (k,x)+(x,x)\in 2\Z \ \mbox{for any $x\in L$}\}.$$
The unique rational cycle $\K\in L'$ which satisfies the system of
{\it adjunction relations} $(\K,E_j)=-(E_j,E_ j)- 2$ for all $j$
is called the  {\em canonical cycle}. Then $Char=\K+2L'$. There is
a natural action of $L$ on $Char$ given by $l*k:=k+2l$; its orbits are
of type $k+2L$. Obviously, $H$ acts freely and transitively on the
set of orbits by $[l']*(k+2L):=k+2l'+2L$.

The first Chern class  realizes an identification
between the $spin^c$--structures $Spin^c(\widetilde{X})$ on
$\widetilde{X}$ and $Char\subseteq L'$.
$Spin^c(\widetilde{X})$ is an $L'$ torsor compatible with the
above action of $L'$ on $Char$.

All the  $spin^c$--structures on
$M$ are obtained by the  restriction $Spin^c(\widetilde{X})\to Spin^c(M)$,
$Spin^c(M)$ is an $H$ torsor, and the actions are compatible with the factorization
$L'\to H$. Hence, one has
an identification of $Spin^c(M)$  with the set of $L$--orbits of
$Char$, and this identification is compatible with the
action of $H$ on both sets. In this way,   any
$spin^c$--structure of $M$ will be represented by an orbit
$[k]:=k+2L\subseteq Char$ (see \cite{GS}).

The \emph{canonical} $spin^c$--structure
corresponds to $[-\K]$.

\subsection{$\Z[U]$--modules.}
\labelpar{ss:3.1} The lattice cohomology has a  graded $\Z[U]$--module structure.
One of its building blocks, $\calt_r^+$, is defined as follows,
 cf. \cite{OSzP,OSZINV}.

Consider the graded $\Z[U]$--module $\Z[U,U^{-1}]$, and   denote by
 $\calt_0^+$ its quotient by the submodule  $U\cdot \Z[U]$.
Its grading is given by  $\deg(U^{-d})=2d$ ($d\geq 0$).
Next, for any graded $\Z[U]$--module $P$ with
$d$--homogeneous elements $P_d$, and  for any  $r\in\Q$,   we
denote by $P[r]$ the same module graded (by $\Q$) in such a way
that $P[r]_{d+r}=P_{d}$. Then set $\calt^+_r:=\calt^+_0[r]$.

\subsection{Lattice cohomology associated with $\Z^s$ and a system
of weights \cite{Nlat}.}\labelpar{ss:lw}
 We fix  a free $\Z$--module, with a {\it fixed basis}
$\{E_j\}_{j=1}^s$, denoted by $\Z^s$. It is also convenient to fix
a total ordering of the index set ${\mathcal J}$, which in the
sequel will be denoted by $\{1,\ldots,s\}$.
Using  the pair $(\Z^s, \{E_j\}_j)$ and a system of weights, in the next paragraphs
 we determine a
cochain complex whose cohomology is our central object.

\bekezdes\labelpar{complex} {\bf The cochain complex.}
$\Z^s\otimes \R$ has a natural cellular decomposition into cubes. The
set of zero--dimensional cubes is provided  by the lattice points
$\Z^s$. Any $l\in \Z^s$ and subset $I\subseteq {\mathcal J}$ of
cardinality $q$  define a $q$--dimensional cube,  denoted by
$(l,I)$ (or only by $\square_q$) which has its
vertices in the lattice points $(l+\sum_{j\in I'}E_j)_{I'}$, where
$I'$ runs over all subsets of $I$. On each such cube we fix an
orientation. This can be determined, e.g.,  by the order
$(E_{j_1},\ldots, E_{j_q})$, where $j_1<\cdots < j_q$, of the
involved base elements $\{E_j\}_{j\in I}$. The set of oriented
$q$--dimensional cubes defined in this way is denoted by $\calQ_q$
($0\leq q\leq s$).

Let $\calC_q$ be the free $\Z$--module generated by oriented cubes
$\square_q\in\calQ_q$. Clearly, for each $\square_q\in \calQ_q$,
the oriented boundary $\partial \square_q$ has the form
$\sum_k\varepsilon_k \, \square_{q-1}^k$ for some
$\varepsilon_k\in \{-1,+1\}$, where the $(q-1)$--cubes $\{\square_{q-1}^k\}_k$
are the  {\em oriented faces} of $\square_q$.
Clearly $\partial\circ\partial=0$, and  the
homology of the chain complex $(\calC_*,\partial)$ is  just the homology of $\R^s$.
A more interesting (co)homology is obtained via   a set of {\em weight
functions}.

\bekezdes\labelpar{weight} {\bf Definition.} A set of functions
$w_q:\calQ_q\to \Z$  ($0\leq q\leq s$) is called a {\em set of
compatible weight functions}  if the following hold:

(a) for any integer $k\in\Z$, the set $w_0^{-1}(\,(-\infty,k]\,)$
is finite;

(b) for any $\square_q\in \calQ_q$ and for any of its faces
$\square_{q-1}\in \calQ_{q-1}$ one has $w_q(\square_q)\geq
w_{q-1}(\square_{q-1})$.

\begin{example}\labelpar{ex:ww}
Assume that some $w_0:\calQ_0\to\Z$  satisfies (a). For any $q\geq 1$ set
\begin{equation*}
w_q(\square_q):=\max\{w_0(v) \,:\, \mbox{$v$ is a vertex of \,$\square_q$}\}.
\end{equation*}
Then $\{w_q\}_q$ is a set of compatible weight functions.
\end{example}

In the presence  of a set of compatible weight functions
$\{w_q\}_q$,  one sets $\calF^q:=\Hom_{\Z}(\calC_q,\et^+_0)$.
Then  $\calF^q$ is a $\Z[U]$--module by
$(p*\phi)(\square_q):=p(\phi(\square_q))$ ($p\in \Z[U]$), and
it has a $\Z$--grading: $\phi\in \calF^q$ is
homogeneous of degree $d\in\Z$ if for each $\square_q\in\calQ_q$
with $\phi(\square_q)\not=0$, $\phi(\square_q)$ is a homogeneous
element of $\et^+_0$ of degree $d-2\cdot w(\square_q)$.
 (In the sequel sometimes we will omit the index $q$ of $w_q$.)

Next, one defines $\delta_w:\calF^q\to \calF^{q+1}$. For
this, fix $\phi\in \calF^q$ and we show how $\delta_w\phi$ acts on
a cube $\square_{q+1}\in \calQ_{q+1}$. First write
$\partial\square_{q+1}=\sum_k\varepsilon_k \square ^k_q$, then
set
$$(\delta_w\phi)(\square_{q+1}):=\sum_k\,\varepsilon_k\,
U^{w(\square_{q+1})-w(\square^k_q)}\, \phi(\square^k_q).$$
Then $\delta_w\circ\delta_w=0$, hence
$(\calF^*,\delta_w)$ is a cochain complex.
Moreover, $(\calF^*,\delta_w)$ has an
augmentation.  Set $\m_w:=\min_{l\in \Z^s}w_0(l)$ and
choose  $l_w\in \Z^s$ such that $w_0(l_w)=\m_w$. Then one defines
the $\Z[U]$--linear map
$\epsilon_w:\et^+_{2\m_w}\longrightarrow \calF^0$ such that
$\epsilon_w (U^{-\m_w-n})(l)$ is the class of $U^{-\m_w+w_0(l)-n}$
in $\et^+_0$ for any $n\in \Z_{\geq 0}$. Then,
 $\epsilon_w$ is injective, and
$\delta_w\circ\epsilon_w=0$.

\bekezdes\labelpar{def12}{\bf Definitions.} The homology of the cochain complex
$(\calF^*,\delta_w)$ is called the {\em lattice cohomology} of the
pair $(\R^s,w)$, and it is denoted by $\bH^*(\R^s,w)$. The
homology of the augmented cochain complex
$$0\longrightarrow\et^+_{2\m_w}\stackrel{\epsilon_w}{\longrightarrow}
\calF^0\stackrel{\delta_w}{\longrightarrow}\calF^1
\stackrel{\delta_w}{\longrightarrow}\ldots$$ is called the {\em
reduced lattice cohomology} of the pair $(\R^s,w)$, and it is
denoted by $\bH_{red}^*(\R^s,w)$.
For any $q\geq 0$, both $\bH^q$ and $\bH_{red}^q$ admit an induced graded
$\Z[U]$--module structure, and one has  graded
$\Z[U]$--module isomorphisms $\bH^q=\bH^q_{red}$ for $q>0$ and
$\bH^0=\et^+_{2\m_w}\oplus\bH^0_{red}$.


\bekezdes\labelpar{mod} {\bf Modification.} Clearly, instead of
all the cubes of $\R^s$ we can consider only those ones which sit
in $[0,\infty)^s$, or only in the `rectangle'
$R:=[0,T_1]\times\cdots\times [0,T_s]$ (for some $T_i\in \Z_{\geq
0}$). In such a case, we write $\bH^*([0,\infty)^s,w)$ or
$\bH^*(R,w)$ for the corresponding lattice cohomologies.

\subsection{The $\bS^*$--realization}\labelpar{HS} A more geometric realization of the
modules $\bH^*$ is the following.
For each $N\in\Z$, define
$S_N=S_N(w)\subseteq\R^s$ as the union of all the cubes $\square_q$
(of any dimension) with $w(\square_q)\leq N$. Clearly, $S_N=\emptyset$,
whenever $N<\m_w$. For any $q\geq0$, set
\begin{equation*}
\bS^q(\R^s,w):=\oplus_{N\geq \m_w}H^q(S_N,\Z).
\end{equation*}
Then $\bS^q$ is  $2\Z$--graded, the $d=2N$--homogeneous elements
 $\bS^q_d$ consists of $H^q(S_N,\Z)$. Also, $\bS^q$ is a $\Z[U]$--module.
 The $U$--action is given by the restriction map
 $r_{N+1}:H^q(S_{N+1},\Z)\longrightarrow H^q(S_N,\Z)$,
 namely, the $N^{th}$--component of $U*(\{\alpha_N\}_N)$ is $r_{N+1}(\alpha_{N+1})$.
 Moreover, for $q=0$, the fixed
 base--point $l_w\in S_{\m_w}$ provides an augmentation $H^0(S_N,\Z)=\Z\oplus\widetilde{H}^0(S_N,\Z)$,
 hence an augmentation of the graded $\Z[U]$--modules
\begin{equation*}
\bS^0=(\oplus_{N\geq \m_w}\Z)\oplus(\oplus_{N\geq \m_w}\widetilde{H}^0(S_N,\Z))=\et^+_{2\m_w}\oplus\bS^0_{red}.
\end{equation*}

\begin{theorem}\labelpar{th:HS}\cite{Nlat}
There exists a graded $\Z[U]$--module isomorphism, compatible with
the augmentations, between $\bH^*(\R^s,w)$ and $\bS^*(\R^s,w)$.
Similar statement is valid for

\noindent $\bH^*([0,\infty)^s,w)$, or for
$\bH^*(\prod_i[0,T_i],w)$.
\end{theorem}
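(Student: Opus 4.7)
The plan is to construct, for each integer $N$, a chain-level isomorphism between the degree-$2N$ part of $(\calF^*,\delta_w)$ and the cellular cochain complex of $S_N$ with $\Z$ coefficients. First, observe that by condition (b), every face of a cube of weight $\le N$ has weight $\le N$, so $S_N$ is a genuine CW subcomplex of $\R^s$ with cubical cells. A homogeneous element $\phi\in\calF^q_{2N}$ assigns to each oriented $q$-cube $\square_q$ a homogeneous element of $\calt^+_0$ of degree $2N-2w(\square_q)$; since $\calt^+_0$ vanishes in negative degrees, this forces $\phi(\square_q)=0$ whenever $w(\square_q)>N$, and otherwise determines $\phi(\square_q)=a_{\square_q}\,U^{-(N-w(\square_q))}$ for a unique $a_{\square_q}\in\Z$. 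The assignment $\phi\mapsto(\square_q\mapsto a_{\square_q})$ then gives a natural $\Z$-module isomorphism $\calF^q_{2N}\cong C^q(S_N;\Z)$.

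Next I would check compatibility of the differential and the module structure. For any face $\square_q^k$ of a cube $\square_{q+1}$ with $w(\square_{q+1})\le N$, the identity $U^{w(\square_{q+1})-w(\square_q^k)}\cdot U^{-(N-w(\square_q^k))}=U^{-(N-w(\square_{q+1}))}$ yields
\[
(\delta_w\phi)(\square_{q+1})=\Big(\sum_k\varepsilon_k\,a_{\square_q^k}\Big)U^{-(N-w(\square_{q+1}))},
\]
while both sides vanish when $w(\square_{q+1})>N$. Since the signs $\varepsilon_k$ are by definition those of the cellular boundary $\partial\square_{q+1}$, this is precisely the cellular coboundary on $C^*(S_N;\Z)$. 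Taking cohomology yields $\bH^q(\R^s,w)_{2N}\cong H^q(S_N;\Z)=\bS^q_{2N}$. Multiplication by $U$ lowers degree by $2$ and, on coefficients, kills the $q$-cubes of weight exactly $N$ while leaving those of weight $\le N-1$ unchanged; under the chain-level identification this is exactly the restriction of cochains from $S_N$ to $S_{N-1}$, which on cohomology induces $r_N$ and hence the $U$-action on $\bS^*$. For $q=0$, the augmentation $\epsilon_w$ sends the degree-$2N$ generator of $\calt^+_{2\m_w}$ to the $0$-cochain constantly equal to $1$ on the vertex set of $S_N$; this class is the pull-back of the generator of $H^0(\mathrm{pt})$, and since $l_w\in S_N$ for $N\ge\m_w$ it matches the splitting $H^0(S_N)=\Z\oplus\widetilde{H}^0(S_N)$, identifying $\im\epsilon_w$ with the canonical $\calt^+_{2\m_w}$ summand of $\bS^0$.

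The main obstacle is the careful bookkeeping of signs and exponents: one must verify that the $\varepsilon_k$ in the definition of $\delta_w$ are precisely the cellular boundary signs of $\square_{q+1}$, and that the telescoping of $U$-exponents produces the \emph{same} factor $U^{-(N-w(\square_{q+1}))}$ independent of the face $\square_q^k$. Once these checks are in place, graded $\Z[U]$-module linearity and compatibility with augmentations are immediate from the chain-level construction. The restricted cases $\bH^*([0,\infty)^s,w)$ and $\bH^*(\prod_i[0,T_i],w)$ follow verbatim by replacing $\R^s$ with the corresponding cubical subcomplex; their sublevel sets are still CW subcomplexes to which the same identification $\calF^q_{2N}\cong C^q(S_N;\Z)$ applies.
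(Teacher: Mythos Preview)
Your argument is correct and is precisely the standard proof; the present paper does not supply its own proof but cites \cite{Nlat}, where the theorem originates. The one point you flag as ``the main obstacle'' --- that the signs $\varepsilon_k$ in $\delta_w$ coincide with the cellular boundary signs --- is in fact immediate from the paper's definition (\ref{complex}), where $\partial\square_{q+1}=\sum_k\varepsilon_k\square_q^k$ is declared to be the oriented cellular boundary, so no further verification is needed there.
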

From now on we  denote both  realizations with the same symbol
$\bH^*$, no matter which  one we  use.

\subsection{The lattice cohomology associated with a plumbing graph.}\label{ss:pl}
Let $G$ be a negative definite
plumbing graph as in \ref{be:gene}. Let $s$ be the number of vertices.
Then we can associate to $L=\Z^s$ the free $\Z$--module $\calC_q$ generated by oriented cubes
$\square_q\in\calQ_q$, as in \ref{complex}.

To any $k\in Char$ we associate  weight functions
$\{w_q\}_q$ as follows.
First, we  define $\chi_k:L\to \Z$ by
$$\chi_k(l)=-(l,l+k)/2;$$
and we also write $\m_k:=\min\, \{\, \chi_k(l)\, :\, l\in L\}$.
Then the weight functions are defined as in \ref{ex:ww} via $w_0:=\chi_k$.
The associated lattice cohomologies   will be denoted
by $\bH^*(G,k)$ and  $\bH^*_{red}(G,k)$.

It is proved in \cite{Nlat} that $\bH^*_{red}(G,k)$
is finitely generated over $\Z$.

\begin{remark}\label{re:depk} Although each $k$ provides a different cohomology
module, there are only $|L'/L|$ essentially different ones. Indeed,
 assume that $[k]=[k']$, hence $k'=k+2l$ for some $l\in L$. Then
\begin{equation}\label{eq:k'k}
\chi_{k'}(x-l)=\chi_k(x)-\chi_k(l) \ \ \ \mbox{ for any $x\in L$}.\end{equation}
 Therefore,
the transformation $x\mapsto x':=x-l$ realizes the following
identification:
$$\bH^*(G,k')=\bH^*(G,k)[-2\chi_k(l)].$$
\end{remark}

\subsection{The distinguished representatives $k_r$}\labelpar{ss:distin}
We fix a $spin^c$--structure $[k]$.
Recall, see \ref{be:gene}, that $[k]$ has  the form $k_{can}+2(l'+L)$ for some $l'\in L'$.
Among all the characteristic elements in $[k]$ we will choose a very special one.
Consider the (Lipman, or anti--nef)
cone $$\calS':=\{l'\in L': \, (l',E_v)\leq0 \text{ for any vertex }v\}.$$
\bekezdes {\bf Definition.}\label{def:KR} \cite[(5.4--5.5)]{OSZINV}
We denote by $l'_{[k]}\in L'$ the unique minimal element of $(l'+L)\cap \calS'$ and we call
$k_r:=k_{can}+2l'_{[k]}$ the {\em distinguished representative} of the class $[k]$.

For example, since the minimal element  of  $L\cap \calS'$ is the  zero cycle,
we get  $l'_{[k_{can}]}=0$, and the distinguished representative in $[k_{can}]$
is  the canonical cycle $k_{can}$ itself. In general, $l'_{[k]}\geq 0$.

The classes $k_r$ generalize the canonical cycle for different $spin^c$--structures.
Their importance will be transparent below,  see also \cite{OSZINV,Nlat,Ng} for different applications.
The next properties are proved in
\cite{Nlat}:

\begin{lemma}\labelpar{propF2}

(a) \ $\bH^*(G,k_r)\cong\bH^*([0,\infty)^s,k_r)$ for any $k_r$.

(b) \ The set $\{\bH^*(G,k_r)\}_{[k_r]}$ is independent on
the plumbing representation $G$ of the 3--manifold $M$, hence it associates a
graded $\Z[U]$--module to
any pair $(M,k_r)$ indexed by  $[k_r]\in\, Spin^c(M)$.
\end{lemma}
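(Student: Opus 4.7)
The plan is to prove the two parts independently using the $\bS^*$-realization from Theorem~\ref{th:HS}, which reduces everything to homotopy statements about the sublevel sets $S_N$ of $\chi_{k_r}$.

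For part (a), I would argue that for each $N \geq \m_{k_r}$, the inclusion $S_N \cap [0,\infty)^s \hookrightarrow S_N \subseteq \R^s$ is a homotopy equivalence. The idea is to deformation retract $S_N$ onto its first-octant part by a sequence of lattice moves $l \mapsto l + E_v$ (for vertices $v$ with $l_v < 0$) along which $\chi_{k_r}$ is non-increasing. The basic computation is
\begin{equation*}
\chi_{k_r}(l + E_v) - \chi_{k_r}(l) = -(l, E_v) - \tfrac{1}{2}(E_v, E_v + k_r) = -(l, E_v) + 1 - (l'_{[k]}, E_v),
\end{equation*}
using $(k_{can}, E_v) = -(E_v,E_v) - 2$ and $k_r = k_{can} + 2 l'_{[k]}$. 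Because $l'_{[k]} \in \calS'$, the term $-(l'_{[k]}, E_v)$ is non-negative, and a Laufer-style argument — exploiting that $l'_{[k]}$ is the \emph{minimal} element of its class in the Lipman cone — produces, for any $l \not\geq 0$, a vertex $v$ with $l_v < 0$ for which the above difference is $\leq 0$. Iterating, one builds for each $l \notin [0,\infty)^s$ a finite path of lattice points terminating in $[0,\infty)^s$ along which $\chi_{k_r}$ is non-increasing, and this path extends cube-by-cube to a retraction of $S_N$ onto $S_N \cap [0,\infty)^s$.

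For part (b), I would appeal to Neumann's plumbing calculus: any two connected negative definite plumbing graphs representing the same $M$ are related by a finite sequence of blow-ups/downs of $(-1)$-vertices of degree $\leq 2$. It therefore suffices to prove invariance under a single blow-up $G \leadsto G'$ introducing a new vertex $E_0$ with $(E_0, E_0) = -1$. Writing $L' = L \oplus \Z E_0$ and letting $\pi: L' \to L$ be the geometric projection collapsing $E_0$, one checks (using the adjunction relations) that every class $[k_r']$ on $G'$ corresponds bijectively to a class $[k_r]$ on $G$ in a manner compatible with the distinguished representatives. The key observation is that $\chi_{k_r'}$ restricted to each fiber $\pi^{-1}(l) \cap [0,\infty)^{s+1}$ is a strictly convex one-variable quadratic (since $(E_0, E_0) = -1$), hence attains its minimum at a unique lattice point $l_0(l)$, and the minimum value equals $\chi_{k_r}(l)$ up to a constant independent of $l$. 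Using this, one constructs a fiberwise deformation retraction of $S_N(G') \cap [0,\infty)^{s+1}$ onto a subspace canonically homeomorphic to $S_N(G) \cap [0,\infty)^s$, which yields a graded $\Z[U]$-module isomorphism of the corresponding lattice cohomologies.

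The main obstacle in (a) is verifying, at each step, the existence of a suitable vertex $v$ with $l_v < 0$ making $\chi_{k_r}$ non-increasing; this requires carefully exploiting the \emph{minimality} defining $l'_{[k]}$, not merely its membership in $\calS'$. The main obstacle in (b) is bookkeeping: ensuring that the correspondence of $spin^c$-structures under blow-up sends distinguished representatives to distinguished representatives, so that the fiberwise minimum of $\chi_{k_r'}$ lands on the correct lattice point for every class $[k]$.
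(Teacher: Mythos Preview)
The paper does not supply its own proof of this lemma; it simply records it as ``proved in \cite{Nlat}''. Your sketch is correct and follows the same route as \cite{Nlat}. In fact the present paper reproduces the key step of your part (a) argument later, in the more general context of the Reduction Theorem: compare your Laufer-style search for a vertex $v$ with $l_v<0$ and $(l+l'_{[k]},E_v)\geq 1$ (using the minimality of $l'_{[k]}$ in $\calS'$) with Lemma~\ref{lem:con4}, and your ``cube-by-cube'' retraction with Lemmas~\ref{lem:con1}--\ref{lem:con3}.

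The only place where your write-up is thin is the passage from ``for each $l\not\geq 0$ there exists a good direction $v$'' to an honest deformation retraction of the cubical complex $S_N$. A pointwise computation sequence does not by itself contract cubes. The fix is the one used in Section~4: $S_N$ is finite (condition (a) of \ref{weight}), so enclose it in a rectangle $[a,b]$; apply your existence statement at the current minimal corner $a$ to get $v$ with $a_v<0$ and $\chi_{k_r}(a+E_v)\leq\chi_{k_r}(a)$; then observe that for every $l\geq a$ with $l_v=a_v$ one has $(l-a,E_v)\geq 0$, hence the same inequality $\chi_{k_r}(l+E_v)\leq\chi_{k_r}(l)$ propagates to the whole slab $\{l_v=a_v\}\cap[a,b]$. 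This collapses that slab onto $\{l_v=a_v+1\}$ while keeping all cube-weights non-increasing, and one iterates. This is precisely the mechanism of Lemma~\ref{lem:con3}.

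For part (b), your outline via the plumbing calculus and fiberwise quadratic minimization over the new $(-1)$-coordinate is the standard argument from \cite{Nlat}; the bookkeeping you flag (that distinguished representatives correspond under blow-up) is indeed the point requiring care there, but it goes through.
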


\subsection{Notation.} \ In the sequel we denote $\chi_{k_{can}}$ by $\chi_{can}$. 

\section{The lattice reduction}\labelpar{s:4a}

\subsection{Computation sequences} \label{ss:cs}
The goal of the present section is to show that
the lattice cohomology of the lattice $L$
(or any rectangle of it) can be reduced to a considerably smaller rank lattice
with properly chosen weight functions.  In this subsection
we introduce the needed generalizations  and we state the main theorem.

The idea of the Reduction Theorem is present  already in \cite{OSZINV}.

The new lattice of rank $\nu$ will be associated with a set of `bad' vertices, and
the `new  weights' will be determined via the `old weights' of certain distinguished
`universal  cycles' of $L$ (determined by the bad vertices). The construction and main
 properties of these cycles are closely related with
generalized Laufer--type computational sequences of $L$. (For Laufer's original
computational sequences see e.g. \cite{Laufer72,Laufer77}.) In particular, in several
paragraphs we will analyze properties of these sequences and of these universal cycles.

We start with their definition.

\bekezdes\label{bek:XI}
 {\bf The definition of the lattice points $x(i_1,\ldots,i_\nu)$.} Suppose we have a
 family of {\it distinguished}  vertices $\ocalj:=\{j_k\}_{k=1}^\nu\subseteq \calj$
 (usually chosen by a certain  geometric property).
Then split the set of vertices $\calj$ into the disjoint union $\overline{\calj}\sqcup\calj^*$.  Furthermore,
let $\{m_j(x)\}_j$ denote  the coefficients of a  rational cycle  $x$, that is $x=\sum_{j\in\calj}m_j(x)E_j$.

In order to simplify the notation we set  ${\bf i}:=(i_1,\ldots,i_j,\ldots,i_\nu)\in\Z^\nu$;
for any $j\in \ocalj$ we write $1_j\in \Z^\nu$ for the vector with all entries zero except at place
$j$ where it is 1,  and for     any
 $I\subseteq \ocalj$ we define $1_I=\sum _{j\in I}1_j$. Similarly, for any
 $I\subseteq \calj$ set $E_I=\sum_{j\in I}E_j$.

Then the cycles $x({\bf i})=x(i_1,\ldots,i_\nu)$ are defined via the next Proposition.

\begin{proposition}\labelpar{lemF1} Fix $[k]$ and $\ocalj\subseteq \calj$ as above.
For any ${\bf i}\in(\Z_{\geq 0})^\nu$ there exists a unique cycle
$x({\bf i})\in L$ satisfying the following properties:
\vspace{1mm}
\begin{itemize}
\item[(a)] $m_{j}(x({\bf i}))=i_j$ for any distinguished  vertex $j\in\ocalj$;
\vspace{1mm}
\item[(b)] $(x({\bf i})+l'_{[k]},E_j)\leq0$ for every `non--distinguished vertex' $j\in\calj^*$;
\vspace{1mm}
\item[(c)] $x({\bf i})$ is minimal with the two previous properties.

\end{itemize}
Moreover,
\ (i)  $x(0,\ldots ,0)=0$;
\ (ii)   $x({\bf i})\geq 0$; and
\ (iii)  $x({\bf i})+E_{\overline{I}}
\leq x({\bf i}+1_{\overline{I}})$ for any $\overline{I}\subseteq \ocalj$.
\end{proposition}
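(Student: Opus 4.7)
The plan is to realize $x({\bf i})$ as the unique componentwise minimum of
\begin{equation*}
\calS_{\bf i}:=\bigl\{\,x\in L\,:\,m_j(x)=i_j\text{ for }j\in\ocalj;\ (x+l'_{[k]},E_j)\leq 0\text{ for }j\in\scalj\,\bigr\},
\end{equation*}
and then to verify (i)--(iii) individually. Three ingredients yield existence and uniqueness. \emph{Closure under $\min$:} if $x_1,x_2\in\calS_{\bf i}$ and $j\in\scalj$ with $m_j(\min(x_1,x_2))=m_j(x_1)\leq m_j(x_2)$, then $x_1-\min(x_1,x_2)\geq 0$ has zero coefficient at $j$, so $(E_k,E_j)\geq 0$ for $k\neq j$ yields $(x_1-\min(x_1,x_2),E_j)\geq 0$, whence $(\min(x_1,x_2)+l'_{[k]},E_j)\leq(x_1+l'_{[k]},E_j)\leq 0$. \emph{Bounded below:} for $x=\sum_{j\in\ocalj}i_jE_j+\sum_{j\in\scalj}m_j^*E_j\in\calS_{\bf i}$, condition (b) reads $A^*m^*\leq b$, where $A^*:=\frI|_{\scalj}$ and $b_j:=-(\sum_{k\in\ocalj}i_kE_k+l'_{[k]},E_j)$; since $A^*$ is negative definite with non-negative off-diagonals, $-A^*$ is a Stieltjes matrix with $(-A^*)^{-1}\geq 0$, and monotonicity of this inverse gives $m^*\geq(-A^*)^{-1}(-b)$ componentwise. \emph{Non-emptiness:} the Laufer-type sequence $z_0:=\sum_{j\in\ocalj}i_jE_j$, $z_{n+1}:=z_n+E_{j(n)}$ with any $j(n)\in\scalj$ satisfying $(z_n+l'_{[k]},E_{j(n)})>0$, terminates at an element of $\calS_{\bf i}$ by the standard argument (the increment $\chi_{k_r}(z_{n+1})-\chi_{k_r}(z_n)=1-(z_n+l'_{[k]},E_{j(n)})\leq 0$ combined with the bounded-below nature of $\chi_{k_r}$ on $L$). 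Combining these, $\calS_{\bf i}$ has a unique componentwise minimum, which we take as $x({\bf i})$.

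Property (iii) follows from the same sign computation as $\min$-closure: the cycle $x({\bf i}+1_{\overline I})-E_{\overline I}$ satisfies (a) by construction, and for $j\in\scalj$ and $k\in\overline I\subseteq\ocalj$ we have $k\neq j$ and $(E_k,E_j)\geq 0$, so $(E_{\overline I},E_j)\geq 0$ and
\begin{equation*}
(x({\bf i}+1_{\overline I})-E_{\overline I}+l'_{[k]},E_j)\leq(x({\bf i}+1_{\overline I})+l'_{[k]},E_j)\leq 0,
\end{equation*}
placing $x({\bf i}+1_{\overline I})-E_{\overline I}$ in $\calS_{\bf i}$; the minimality of $x({\bf i})$ then yields $x({\bf i})+E_{\overline I}\leq x({\bf i}+1_{\overline I})$. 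Property (i) is immediate once (ii) is known: $l'_{[k]}\in\calS'$ shows $0\in\calS_{(0,\ldots,0)}$ and hence $x(0,\ldots,0)\leq 0$, while (ii) supplies the reverse inequality.

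The genuine obstacle is (ii), which crucially uses the \emph{minimality} of $l'_{[k]}$ in $(l'_{[k]}+L)\cap\calS'$ (not merely $l'_{[k]}\in\calS'$). I argue by contradiction: suppose $m_{j_0}(x)<0$ at some $j_0\in\scalj$ with $x:=x({\bf i})$, and decompose $x=x^+-l$ with $x^+:=\max(x,0)\geq 0$ and $l:=-\min(x,0)\in L_{\geq 0}\setminus\{0\}$ of disjoint supports; since $m_j(x)=i_j\geq 0$ on $\ocalj$, one has $\supp(l)\subseteq\scalj$. The minimality of $l'_{[k]}$ forces $l'_{[k]}-l\notin\calS'$, so some $j_1$ satisfies $(l,E_{j_1})<(l'_{[k]},E_{j_1})\leq 0$. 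Since $l\geq 0$ and $(E_k,E_{j_1})\geq 0$ for $k\neq j_1$, the only possible source of the negative value of $(l,E_{j_1})$ is the diagonal term $m_{j_1}(l)(E_{j_1},E_{j_1})$, forcing $m_{j_1}(l)>0$, i.e.\ $j_1\in\supp(l)\subseteq\scalj$. Therefore $m_{j_1}(x^+)=0$ and $(x^+,E_{j_1})\geq 0$ (only non-negative cross-terms survive), so
\begin{equation*}
(x+l'_{[k]},E_{j_1})=(x^+,E_{j_1})+\bigl[(l'_{[k]},E_{j_1})-(l,E_{j_1})\bigr]>0,
\end{equation*}
contradicting (b) at $j_1\in\scalj$. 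This completes the proof.
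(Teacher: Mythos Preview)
Your proof is correct and follows the same overall architecture as the paper's: closure of $\calS_{\bf i}$ under componentwise $\min$, non-emptiness, and then (ii) via the minimality of $l'_{[k]}$ in $(l'_{[k]}+L)\cap\calS'$, with (i) and (iii) deduced exactly as in the paper. The one substantive difference is the non-emptiness step: the paper produces an explicit element of $\calS_{\bf i}$ by a scaling construction (it takes $\widetilde{x}$ with $\widetilde{x}+l'_{[k]}\in\calS'$ from \cite[7.3]{OSZINV}, multiplies by a large $a$, and subtracts base vectors over $\ocalj$), whereas you run a Laufer-type increasing sequence from $\sum_{j\in\ocalj}i_jE_j$. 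Your route is more self-contained and, pleasantly, anticipates the computation-sequence machinery of the very next lemma in the paper; the paper's route avoids any termination argument. On that point, your termination justification is slightly loose: ``$\chi_{k_r}$ bounded below'' together with $\chi_{k_r}(z_{n+1})\leq\chi_{k_r}(z_n)$ does not by itself force termination, since the increment can be $0$; what you really use is that the sublevel sets $\chi_{k_r}^{-1}((-\infty,c])$ are \emph{finite} (positive-definite quadratic plus linear) and the $z_n$ are pairwise distinct. Finally, your explicit Stieltjes/monotone-inverse argument for coordinatewise boundedness below of $\calS_{\bf i}$ is a welcome addition --- the paper asserts the existence of a unique minimum from $\min$-closure alone, leaving this point implicit.
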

\begin{proof}
The proof is similar to the proof of  \cite[Lemma 7.6]{OSZINV}, valid
for  $\nu=1$ (or to the existence of the Artin's cycle which corresponds to
  $\nu=0$ and the canonical class).

  First we verify the existence of an element $x\in L$ with (a)--(b). By (the proof of)
\cite[7.3]{OSZINV} there exists
$\widetilde{x}\geq \sum_{j\in\ocalj}E_j$ such that $(\widetilde{x}+l'_{[k]},E_j)\leq 0$
for any $j\in \calj$.  Take some $a\in\Z_{>0}$ sufficiently large so that $(a-1)l'_{[k]}\in L$, and
 $h_j:=m_{j}(a\widetilde{x}+(a-1)l'_{[k]})-i_j\geq 0$ for any $j\in \ocalj$. Since $l'_{[k]}\geq 0$, this is
 possible. Then set $x:=a\widetilde{x}+(a-1)l'_{[k]}-\sum_{j\in \ocalj} h_jE_j$. Clearly $m_{j}(x)=i_j$
 for any $j\in\ocalj$ and $(x+l'_{[k]}, E_i)=a(\widetilde{x}+l'_{[k]},E_i)-\sum_{j\in\ocalj}h_j(E_j,E_i)\leq 0$ for any
 $i\in \calj^*$.

 Next, we verify that there is a unique minimal element with (a)--(b). This follows from the fact that
 if $x_1$ and $x_2$ satisfy (a)--(b), then $x:=\min\{x_1,x_2\}$ does too. Indeed, for any $j\in\calj^*$,
 at least for one index $n\in\{1,2\}$ one has $E_j\not\in|x_n-x|$. Then $(x+l'_{[k]},E_j)=(x_n+l'_{[k]},E_j)-
 (x_n-x,E_j)\leq 0$.

 Finally, we verify (i)--(ii)--(iii).  For (ii)  write $x({\bf i})$ as
 $x_1-x_2$ with $x_1\geq 0$, $x_2\geq 0$, $|x_1|\cap |x_2|=\emptyset$. Fix an index $j\in \calj^*$.
 If $j\not\in |x_1|$ then $(l'_{[k]}-x_2,E_j)\leq (l'_{[k]}-x_2+x_1,E_j)\leq 0$. If
 $j\in |x_1|$ then $(l'_{[k]}-x_2,E_j)\leq (l'_{[k]},E_j)\leq 0$, cf. \ref{def:KR}.
 Moreover, $|x_2|\subset  \calj^*$ implies $(-x_2,E_j)\leq 0$ for any $j\in \ocalj$.
 Hence $l'_{[k]}-x_2\in (l'_{[k]}+L)\cap \calS'$,
 which implies $x_2=0$ by the minimality of $l'_{[k]}$.
 This ends (ii) and shows (i) too.
 For (iii) notice that
 $(x({\bf i}+1_{\overline{I}})+l'_{[k]},E_j)  -(E_{\overline{I}},E_j)\leq 0$ for any $j\in \calj^*$,
 hence the result follows from the minimality property (c) applied for $x({\bf i})$.
 \end{proof}

These cycles  satisfy the following universal property as well.

\begin{lemma}\label{lem:lauf} Fix some ${\bf i}\in(\Z_{\geq 0})^\nu$.
Assume that   $x\in L$ satisfies $m_{j}(x)= m_{j}(x({\bf i}))$ for all $j\in\ocalj$.

If $x\leq x({\bf i})$, then there is a `generalized Laufer computation sequence'
connecting $x$ with $x({\bf i})$. More precisely, one constructs a sequence $\{x_n\}_{n=0}^t$ as follows.
Set $x_0=x$. Assume that $x_n$ is already constructed. If for some $j\in\calj^*$
one has $(x_n+l'_{[k]},E_j)>0$ then take $x_{n+1}=x_n+E_{j(n)}$, where $j(n)$ is such an index. If $x_n$
satisfies \ref{lemF1}(b), then stop and set $t=n$. Then this procedure stops after finite steps and
$x_t$ is exactly $x({\bf i})$.

Moreover, along the
computation sequence  $\chi_{k_r}(x_{n+1})\leq  \chi_{k_r}(x_{n})$ for any $0\leq n<t$.
\end{lemma}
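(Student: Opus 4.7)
The plan is to verify the three assertions of the lemma in order: (i) the procedure is well defined in the sense that $x_n\le x({\bf i})$ throughout, (ii) it terminates at $x({\bf i})$, and (iii) $\chi_{k_r}$ is non-increasing along the sequence.

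First I would prove by induction on $n$ that $x_n\le x({\bf i})$. The base case $x_0=x\le x({\bf i})$ is the hypothesis. For the inductive step, assume $x_n\le x({\bf i})$ and that the procedure chooses some $j(n)\in\scalj$ with $(x_n+\lk,E_{j(n)})>0$. The key claim is that $m_{j(n)}(x_n)<m_{j(n)}(x({\bf i}))$, so that $x_{n+1}=x_n+E_{j(n)}\le x({\bf i})$. Suppose, towards a contradiction, that $m_{j(n)}(x_n)=m_{j(n)}(x({\bf i}))$; then $E_{j(n)}\notin |x({\bf i})-x_n|$. Since $x({\bf i})-x_n\ge 0$ is supported away from $j(n)$ and the off-diagonal entries $(E_i,E_{j(n)})$ with $i\ne j(n)$ are non-negative in any plumbing tree, we obtain $(x({\bf i})-x_n,E_{j(n)})\ge 0$, whence
\[(x({\bf i})+\lk,E_{j(n)})\ge (x_n+\lk,E_{j(n)})>0,\]
contradicting property (b) of $x({\bf i})$ in Proposition \ref{lemF1}. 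Hence $m_{j(n)}(x_n)<m_{j(n)}(x({\bf i}))$, and the inductive step is complete. Consequently $\{x_n\}$ is a strictly increasing sequence of lattice points bounded above by $x({\bf i})$, so it must terminate after finitely many steps at some $x_t$. At termination, the stopping rule gives $(x_t+\lk,E_j)\le 0$ for every $j\in\scalj$, so $x_t$ satisfies (b); moreover the algorithm only modifies coordinates indexed by $\scalj$, so $m_j(x_t)=m_j(x_0)=m_j(x({\bf i}))=i_j$ for every $j\in\ocalj$, so (a) holds as well. The minimality clause (c) then forces $x({\bf i})\le x_t$, and combined with $x_t\le x({\bf i})$ we conclude $x_t=x({\bf i})$.

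For the monotonicity of $\chi_{k_r}$, I would use the identity
\[\chi_{k_r}(y+E_j)-\chi_{k_r}(y)=\chi_{k_r}(E_j)-(y,E_j),\]
which is immediate from $\chi_{k_r}(l)=-(l,l+k_r)/2$. Since $k_r=\K+2\lk$ and $(\K,E_j)=-(E_j,E_j)-2$, a short computation yields $\chi_{k_r}(E_j)=1-(\lk,E_j)$, hence
\[\chi_{k_r}(x_{n+1})-\chi_{k_r}(x_n)=1-(x_n+\lk,E_{j(n)})\le 0,\]
because $(x_n+\lk,E_{j(n)})$ is a positive integer by the choice of $j(n)$. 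The main obstacle in the whole argument is the inductive step in (i): one must show that the procedure never overshoots $x({\bf i})$, and this is precisely where the non-negativity of off-diagonal intersections on a plumbing tree combines with the defining property (b) of $x({\bf i})$ to keep the sequence below the target. Once this is established, termination, identification of the limit, and the $\chi_{k_r}$-monotonicity all follow by the short formal arguments sketched above.
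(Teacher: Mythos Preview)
Your proof is correct and follows essentially the same approach as the paper: both prove by induction that $x_n\le x({\bf i})$ via the same contradiction (if $m_{j(n)}(x_n)=m_{j(n)}(x({\bf i}))$ then $(x_n+\lk,E_{j(n)})\le (x({\bf i})+\lk,E_{j(n)})\le 0$), then invoke the minimality property~(c) to conclude $x_t=x({\bf i})$, and finally observe that $(x_n+\lk,E_{j(n)})>0$ forces $\chi_{k_r}(x_{n+1})\le\chi_{k_r}(x_n)$. Your version simply spells out more of the routine details (termination, the explicit computation of $\chi_{k_r}(E_j)$) that the paper leaves implicit.
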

\begin{proof}
We show by induction that $x_n\leq x({\bf i})$ for any $0\leq n\leq t$; then
 the minimality property $(c)$ of $x({\bf i})$ will finish the argument. For $n=0$ this is clear.
Assume it is true for $x_n$. Then we have to verify
 that  $m_{j(n)}(x_{n})<m_{j(n)}(x({\bf i}))$. Suppose that this is not true, that is
 $m_{j(n)}(x({\bf i})-x_n)=0$. Then $(x_n+l'_{[k]},E_{j(n)})=(x({\bf i})+
 l'_{[k]},E_{j(n)})-(x({\bf i})-x_n,E_{j(n)})\leq0$, a contradiction.

Finally, notice
that $(x_n+l'_{[k]},E_{j(n)})>0$ implies   $\chi_{k_r}(x_{n+1})\leq\chi_{k_r}(x_n)$.
\end{proof}

Note that the generalized computation sequence usually is not unique, one can make several choices for $j(n)$
at each step $n$.

If the choice of the  {\it distinguished vertices} $\ocalj$ is guided by some specific geometric feature,
then  the cycles $x({\bf i})$ will inherit  further properties (see next subsection).

\subsection{Graphs with `bad' vertices}\label{ss:bv}
In \cite{Nlat} is proved that the reduced lattice cohomology of a {\it rational graph}
(see the definition  below) is trivial; in particular, the lattice cohomology measures
 how `non--rational'  the graph
 is. Any graph can be transformed into a
rational graph by decreasing the decorations of  the vertices.
Indeed, if all the Euler decorations of a graph $G$ are sufficiently negative
(e.g. $(E_j,E)\leq 0$ for any $j$), then $G$ is rational.
This shows that the condition in Definition \ref{def:rat} below can be realized.

Recall that a normal surface singularity is rational if its geometric genus is zero.
This vanishing property was  characterized combinatorially by Artin in terms of the graph \cite{Artin62}:
\begin{equation}\label{eq:artin}
\mbox{rationality}\ \Longleftrightarrow \ \ \chi_{can}(l)> 0 \ \ \mbox{for any $l>0$, $l\in L$.}
\end{equation}
\begin{definition}\label{def:rat}
A connected negative  definite graph is {\em rational} if it is
the resolution graph of a rational singularity, that is,  if
it satisfies Artin's criterion (\ref{eq:artin}).

We say that a graph {\it has $\nu$ bad vertices} if one can find a subset
 of vertices  $\{j_k\}_{k=1}^\nu$, called {\it bad vertices},  such that replacing their decorations
 $e_j:=(E_j,E_j)$ by some more negative integers
$e'_j\leq e_j$ we get a rational graph, cf.
\cite{OSZINV,Nlat,Nexseq,OSzP}.
\end{definition}

A possible set of bad vertices can be chosen in many different ways,
it is not determined uniquely even if its minimal with this property. In fact,
usually we will work with {\it non necessarily minimal} sets.

%
In the sequel we fix a (non--necessarily minimal) set $\ocalj$ of {\it bad vertices} (hence, by
decreasing their decorations one gets a rational graph).
Next,  we start to list some additional properties
satisfied by the cycles  $x({\bf i})$ associated with $\ocalj$, provided by this extra `badness'
assumption.
The first is an addendum of Lemma \ref{lem:lauf}.

\begin{lemma}\label{lem:laufb} Fix some ${\bf i}\in(\Z_{\geq 0})^\nu$.
Assume that   $x\in L$ satisfies $m_{j}(x)= m_{j}(x({\bf i}))$ for all $j\in\ocalj$. Then
 $\chi_{k_r}(x)\geq \chi_{k_r}(x({\bf i}))$.
\end{lemma}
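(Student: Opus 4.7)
The strategy is to reduce the general statement to the special case $x\geq x({\bf i})$ by a max--min argument combined with Lemma~\ref{lem:lauf}, and then dispatch that remaining case via the rationality of the graph $G'$ obtained from $G$ by making the decorations of the bad vertices sufficiently negative.

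For the first step I set $y:=\max\{x,x({\bf i})\}$ and $z:=\min\{x,x({\bf i})\}$ componentwise; both cycles still have coefficient $i_j$ at every $j\in\ocalj$. Writing $u:=y-x=x({\bf i})-z\geq 0$ and $v:=y-x({\bf i})=x-z\geq 0$, the supports $|u|$ and $|v|$ are disjoint, so in the plumbing lattice $(u,v)\geq 0$. A direct calculation, using $y+z=x+x({\bf i})$ to cancel all $k_r$--linear terms, gives
\[
\chi_{k_r}(x)+\chi_{k_r}(x({\bf i}))-\chi_{k_r}(y)-\chi_{k_r}(z)=(u,v)\geq 0.
\]
Since $z\leq x({\bf i})$ and the bad--vertex coefficients match, Lemma~\ref{lem:lauf} yields $\chi_{k_r}(z)\geq\chi_{k_r}(x({\bf i}))$, and combining the two inequalities produces $\chi_{k_r}(x)\geq\chi_{k_r}(y)$. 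Hence it is enough to prove the lemma assuming $x\geq x({\bf i})$.

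In that case write $x=x({\bf i})+c$ with $c\geq 0$ and $|c|\subseteq\calj^*$. The identity $\chi_{k_r}(a+b)=\chi_{k_r}(a)+\chi_{k_r}(b)-(a,b)$ together with $\chi_{k_r}(c)=\chi_{can}(c)-(c,l'_{[k]})$ yields
\[
\chi_{k_r}(x)-\chi_{k_r}(x({\bf i}))=\chi_{can}(c)-\bigl(c,\,x({\bf i})+l'_{[k]}\bigr).
\]
Property (b) of Proposition~\ref{lemF1} ensures $(E_j,x({\bf i})+l'_{[k]})\leq 0$ for every $j\in\calj^*$, so the second term is $\leq 0$. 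It therefore suffices to show $\chi_{can}(c)\geq 0$. For this I fix a rationalization $G'$ of $G$, obtained by lowering the Euler numbers at $\ocalj$; since $|c|\subseteq\calj^*$ and the form is modified only at the diagonal entries of the bad vertices, a short check shows $\chi_{can}^{G'}(c)=\chi_{can}^{G}(c)$. Artin's criterion~(\ref{eq:artin}) applied to the rational graph $G'$ then forces $\chi_{can}^{G'}(c)>0$ whenever $c>0$, finishing the proof.

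The main delicate point is the need for two complementary ingredients: Lemma~\ref{lem:lauf} supplies a Laufer--type sequence only when one can move upward toward $x({\bf i})$, so it alone cannot reach a cycle $x$ with arbitrary support; the max--min identity funnels the argument into the two one--sided regimes, and the rationality of $G'$ is precisely what controls the ``upward'' regime along $\calj^*$. Once these two ingredients are lined up, the rest is a bookkeeping computation with $\chi_{k_r}$.
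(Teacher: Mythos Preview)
Your proof is correct and follows essentially the same route as the paper's. Both arguments decompose $x$ relative to $x({\bf i})$ into a ``below'' part and an ``above'' part with disjoint supports in $\calj^*$, use Lemma~\ref{lem:lauf} to handle the downward direction, use rationality (of the subgraph on $\calj^*$, equivalently of your $G'$) to get $\chi_{can}\geq 0$ for the upward direction, and use the disjoint--support inequality $(u,v)\geq 0$ to glue. The only cosmetic difference is that the paper writes $x=x({\bf i})-y_1+y_2$ and passes through $x({\bf i})-y_1$, whereas you package the same computation as a max/min submodularity identity and pass through $y=\max\{x,x({\bf i})\}$; the ingredients and the logic are identical.
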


\begin{proof}
Write $x=x({\bf i})-y_1+y_2$ with $y_1\geq 0$, $y_2\geq 0$, both $y_i$ supported on
$\calj^*$, and $|y_1|\cap|y_2|=\emptyset$. Then $\chi_{k_r}(x)=\chi_{k_r}(x({\bf i})-y_1)
+\chi_{k_r}(y_2)+(y_1,y_2)-(x({\bf i})+l'_{[k]},y_2)$.
Via this identity $\chi_{k_r}(x)
\geq \chi_{k_r}(x({\bf i})-y_1)$.
Indeed, $(y_1,y_2)\geq 0$ by support--argument, $-(x({\bf i})+l'_{[k]},y_2)\geq 0$ by definition of
$x({\bf i})$, and $\chi_{k_r}(y_2)\geq 0$ since $y_2$ is supported on a rational subgraph
(cf. \cite[(6.3)]{OSZINV}). On the other hand, by \ref{lem:lauf},
$\chi_{k_r}(x({\bf i})-y_1)\geq \chi_{k_r}(x({\bf i}))$. \end{proof}

The computation sequence of Lemma \ref{lem:lauf}
is a generalization of Laufer's computation sequence targeting
{\it Artin's fundamental cycle} $z_{min}$, the minimal non--zero cycle of $\calS'\cap L$ \cite{Laufer72}.
In fact, for rational graphs, the algorithm is more precise. For further references we cite it here:

\begin{bekezdes}\labelpar{LC}{\bf Laufer's Criterion of Rationality \cite{Laufer72}.} \
{\it Let $\{z_n\}_{n=0}^T$ be the  computation sequence
(similar as above with $[k]=[k_{can}]$)
connecting $z_0=E_{j}$ (for some $j\in\calj$)
 and the Artin's fundamental cycle $z_T=z_{min}$.
 (This means that $z_{n+1}=z_n+E_{j(n)}$ for some $j(n)$, where
 $(z_n,E_{j(n)})>0$.)
 Then the graph is  {\it rational} if and only if
at every step $0\leq n<T$ one has $(E_{j(n)},z_n)=1$.
 The same statement is true for a sequence connecting $z_0=E_{I}$  with $z_{min}$ for any connected $E_I$.

 (Both statement can be reinterpreted by the identity $\chi_{can}(E_{I})=\chi_{can}(z_{min})=1$.)}
\end{bekezdes}

In some of the applications regarding the cycles $x({\bf i})$
 we do not really need their precise forms,
rather  the values $\chi_{k_r}(x({\bf i}))$. These can be
computed inductively thanks to the following.

\begin{proposition}\labelpar{propF1}
For any  $k_r\in Char$,  ${\bf i}\in(\Z_{\geq 0})^\nu$ and
$j\in \overline{\calj}$ one has
\begin{equation*}
\chi_{k_r}(x({\bf i}+1_j))=
\chi_{k_r}(x({\bf i}))+1-(x({\bf i})+l'_{[k]},E_{j}).
\end{equation*}
Moreover,  $\chi_{k_r}(x(0,\ldots,0))=0$.
\end{proposition}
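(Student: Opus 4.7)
The plan is to prove the recursive identity by comparing $x({\bf i}+1_j)$ with the auxiliary cycle $x({\bf i})+E_j$, which has the same coefficients as $x({\bf i}+1_j)$ on the distinguished vertices $\overline{\calj}$ but admits a direct $\chi_{k_r}$-computation. The initial value $\chi_{k_r}(x(0,\ldots,0))=0$ is immediate from Proposition \ref{lemF1}(i), which gives $x(0,\ldots,0)=0$, together with $\chi_{k_r}(0)=0$.

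The first step is a direct computation. Expanding
\[
\chi_{k_r}(x+E_j)-\chi_{k_r}(x)=-\tfrac12\bigl((E_j,E_j+k_r)+2(x,E_j)\bigr),
\]
and substituting $k_r=k_{can}+2l'_{[k]}$ together with the adjunction identity $(k_{can},E_j)=-(E_j,E_j)-2$, one obtains
\[
\chi_{k_r}(x+E_j)=\chi_{k_r}(x)+1-(x+l'_{[k]},E_j)
\]
for every $x\in L$. Applied with $x=x({\bf i})$, this already yields the right-hand side of the claimed formula; hence the proposition is reduced to the single identity
\[
\chi_{k_r}(x({\bf i}+1_j))=\chi_{k_r}(x({\bf i})+E_j).
\]

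For this, first note that $m_{j'}(x({\bf i})+E_j)=m_{j'}(x({\bf i}+1_j))$ for every distinguished vertex $j'\in\overline{\calj}$ (by inspection), and by Proposition \ref{lemF1}(iii) one has $x({\bf i})+E_j\leq x({\bf i}+1_j)$. Therefore Lemma \ref{lem:lauf} applies with the multi-index ${\bf i}':={\bf i}+1_j$ and initial term $x_0=x({\bf i})+E_j$: it produces a generalized Laufer computation sequence $\{x_n\}_{n=0}^{t}$ with $x_t=x({\bf i}+1_j)$ along which $\chi_{k_r}$ is weakly decreasing. This gives $\chi_{k_r}(x({\bf i}+1_j))\leq\chi_{k_r}(x({\bf i})+E_j)$. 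Conversely, since $x({\bf i})+E_j$ and $x({\bf i}+1_j)$ share the same coefficients on $\overline{\calj}$, Lemma \ref{lem:laufb} applied to ${\bf i}'={\bf i}+1_j$ gives $\chi_{k_r}(x({\bf i})+E_j)\geq\chi_{k_r}(x({\bf i}+1_j))$. Combining the two inequalities yields the desired equality.

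The main conceptual obstacle is recognizing the intermediate cycle $x({\bf i})+E_j$: once it is identified, the rest is essentially a book-keeping argument because Lemmas \ref{lem:lauf} and \ref{lem:laufb} together precisely encode the two opposite $\chi_{k_r}$-inequalities needed to pin down the value. No further ingredient beyond the already-established rationality of the subgraph on $\calj^*$ (used implicitly through Lemma \ref{lem:laufb}) and Laufer's computation sequence is required.
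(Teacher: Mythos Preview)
Your reduction to the identity $\chi_{k_r}(x({\bf i}+1_j))=\chi_{k_r}(x({\bf i})+E_j)$ is exactly the right move, and the computation of $\chi_{k_r}(x+E_j)-\chi_{k_r}(x)$ is fine. However, your argument for this identity does not close: both inequalities you extract point the \emph{same} way. From Lemma~\ref{lem:lauf} you correctly obtain $\chi_{k_r}(x({\bf i}+1_j))\leq\chi_{k_r}(x({\bf i})+E_j)$. But Lemma~\ref{lem:laufb} says that $x({\bf i}')$ is the $\chi_{k_r}$-\emph{minimizer} among cycles with the prescribed $\overline{\calj}$-coefficients, so applying it to ${\bf i}'={\bf i}+1_j$ and $x=x({\bf i})+E_j$ gives again $\chi_{k_r}(x({\bf i})+E_j)\geq\chi_{k_r}(x({\bf i}+1_j))$---the very same inequality. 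You never establish the reverse direction $\chi_{k_r}(x({\bf i})+E_j)\leq\chi_{k_r}(x({\bf i}+1_j))$, and nothing in Lemmas~\ref{lem:lauf} or~\ref{lem:laufb} will give it to you.

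The paper obtains equality not by sandwiching but by proving that along the Laufer sequence $\{x_n\}$ from $x({\bf i})+E_j$ to $x({\bf i}+1_j)$ one has $(x_n+l'_{[k]},E_{j(n)})=1$ \emph{exactly} (not merely $>0$), so $\chi_{k_r}$ is constant along the sequence. The mechanism is to set $z_n:=x_n-x({\bf i})$, observe that this is the beginning of a Laufer sequence from $E_j$ toward the fundamental cycle, note that the relevant intersection numbers are unchanged if one decreases the decorations of the bad vertices (since $j(n)\in\calj^*$), and then invoke Laufer's Criterion of Rationality~\ref{LC} on the resulting rational graph to force $(z_n,E_{j(n)})=1$. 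Together with $(x({\bf i})+l'_{[k]},E_{j(n)})\leq 0$ this pins down $(x_n+l'_{[k]},E_{j(n)})=1$. This is precisely the ``further ingredient'' your last paragraph claims is unnecessary; without it the argument is incomplete.
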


\begin{proof}
We consider  the  computation sequence $\{x_n\}_{n=0}^t$
connecting $x({\bf i})+E_{j}$ and $x({\bf i}+1_j)$ and
we  prove  that $(x_n+l'_{[k]},E_{j(n)})$ is exactly $1$ for any $0\leq n<t$.
Indeed, we take $z_n:=x_n-x({\bf i})$ for $0\leq n\leq t$ and
 one verifies that $\{z_n\}_{n=0}^t$ is the beginning of a
 Laufer sequence $\{z_n\}_{n=0}^T$ (with $t\leq T$)
connecting $E_{j}$ with $z_{min}$ (as in \ref{LC}).
This follows from $(x_n+l'_{[k]},E_{j(n)})>0$ and $(x({\bf i})+l'_{[k]},E_{j(n)})\leq 0$.
Moreover,
the values $(z_n,E_{j(n)})$ will stay unmodified for every $n$ if we replace our graph $G$
with the rational graph $\widetilde{G}$ by decreasing the decorations of the bad vertices.
 Therefore, by Laufer's Criterion \ref{LC}, $(z_n,E_{j(n)})=1$ in
 $\widetilde{G}$, hence consequently in $G$ too. This shows that
\begin{equation*}
1=(x_n-x({\bf i}),E_{j(n)})=(x_n+l'_{[k]},E_{j(n)})-
(x({\bf i})+l'_{[k]},E_{j(n)})
\geq (x_n+l'_{[k]},E_{j(n)}).
\end{equation*} Since $(x_n+l'_{[k]},E_{j(n)})>0$, this number  must equal 1.

This shows   $\chi_{k_r}(x_{n+1})=\chi_{k_r}(x_n)$, or
$\chi_{k_r}(x({\bf i}+1_j))=\chi_{k_r}(x({\bf i})+E_{j})$.
\end{proof}

The next technical result about computation sequences is crucial in the proof of the main result.

\begin{proposition}\label{prop:G}
Fix ${\bf i}\in(\Z_{\geq 0})^\nu$ and a subset $\overline{J}\subseteq \ocalj$.
Let $\cals\iij\subseteq \calj^*$
be the support of $x({\bf i}+1_{\overline{J}})-x({\bf i})-E_{\overline{J}}$.

(I) \ For any subset $\cals'\subseteq \cals\iij$  one can find  a generalized
Laufer computation sequence $\{x_n\}_{n=0}^t$ as in Lemma \ref{lem:lauf} connecting
$x_0=x({\bf i})+E_{\overline{J}} +E_{ \cals'}$ with $x_t=x({\bf i}+1_{\overline{J}})$ with the property that there exists a certain
$t_s$ ($0\leq t_s\leq t$)  such that

(a) $x_{t_s}=x({\bf i})+E_{\overline{J}}+E_{\cals\iij}$, and

 (b)  $\chi_{k_r}(x_n)=\chi_{k_r}(x({\bf i}+1_{\overline{J}}))$  for any $t_s\leq n\leq t$,
or,   $(x_n+l'_{[k]},E_{j(n)})=1$ for $t_s\leq n<t$.

 (II) \ Let $\widetilde{\cals}$ be a subset of $\calj^*$  such that
 \begin{equation}\label{eq:sss}
 \chi_{k_r}(x({\bf i})+E_{\overline{J}\cup\widetilde{\cals}})=
 \chi_{k_r}(x({\bf i}+1_{\overline{J}})).
 \end{equation}
 Then $\widetilde{\cals}\subseteq \cals\iij$. Moreover, there exists a computation
 sequence $\{x_n\}_{n=0}^t$ as in Lemma \ref{lem:lauf} connecting
$x_0=x({\bf i})+E_{\overline{J}}$ with $x_t=x({\bf i})+E_{\overline{J}\cup\widetilde{\cals}}$
such that  $\chi_{k_r}(x_{n+1})\leq \chi_{k_r}(x_n)$ for any $0\leq n<t$.

 (III) \ For any  cycle $l^*>0$ with support $|l^*|\subseteq \calj^*\setminus \cals\iij$, there exists a
 computation sequence  $\{x_n\}_{n=0}^t$  of type  $x_{n+1}=x_n+E_{j(n)}$ (for  $n<t$),
 $x_0=x({\bf i})+E_{\overline{J}\cup\cals\iij}$ and
 $x_t=x({\bf i})+E_{\overline{J}\cup\cals\iij}+l^*$ such that $\chi_{k_r}(x_{n+1})\geq \chi_{k_r}(x_{n})$
 for any $0\leq n< t$ (that is, with $(x_n+l'_{[k]},E_{j(n)})\leq 1$).
\end{proposition}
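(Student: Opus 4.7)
The common strategy across the three parts is to analyze generalized Laufer-type computation sequences in $G$ by pulling key intersection identities back to the auxiliary rational graph $\widetilde{G}$ obtained from $G$ by decreasing the Euler numbers at the bad vertices in $\overline{\calj}$. The crucial invariance is $(z,E_j)^G=(z,E_j)^{\widetilde{G}}$ whenever $j\in\calj^*$, for every $z\in L$, since the changes affect only the diagonal entries of the bad vertices; this allows us to transport Laufer's Criterion \ref{LC} and Artin's positivity of $\chi_{can}$ on positive cycles into our computations. Throughout I write $y:=x({\bf i})+E_{\overline{J}}$, $y^*:=x({\bf i}+1_{\overline{J}})$, $u:=y^*-y-E_{\cals\iij}\geq 0$ (supported on $\cals\iij$), and use freely the identities $\chi_{k_r}(a+b)-\chi_{k_r}(a)=\chi_{k_r}(b)-(a,b)$ and $\chi_{k_r}(b)=\chi_{can}(b)-(b,l'_{[k]})$.

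For Part (I) the pivotal fact is the equality $\chi_{k_r}(y+E_{\cals\iij})=\chi_{k_r}(y^*)$. I would secure it first by running a Laufer sequence from $x({\bf i})$ to $y^*$ that iteratively applies Proposition \ref{propF1}: one adds the bad vertices $E_{j}$ ($j\in\overline{J}$) one at a time, separating them by $\calj^*$-Laufer sub-sequences, and by the argument of Proposition \ref{propF1} (reduced to Laufer's Criterion in $\widetilde{G}$) each $\calj^*$-step satisfies $(x_n+l'_{[k]},E_{j(n)})=1$, preserving $\chi_{k_r}$. One arranges the sub-sequences so that $y+E_{\cals\iij}$ is an intermediate checkpoint; since all subsequent moves are $\calj^*$-Laufer steps, $\chi_{k_r}$ stabilizes at $\chi_{k_r}(y^*)$ at the checkpoint, giving the equality. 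With this equality in hand, the first half — running a Laufer sequence from $y+E_{\cals'}$ to $y+E_{\cals\iij}$ by adding fresh vertices of $\cals\iij$ — is proved step by step by contradiction. At a state $x_n=y+E_{\cals'\cup B}$ with $B\subsetneq\cals\iij\setminus\cals'$, if no $j\in\cals\iij\setminus(\cals'\cup B)$ satisfied $(x_n+l'_{[k]},E_j)>0$, then
\[
\chi_{k_r}(y+E_{\cals\iij})-\chi_{k_r}(x_n)=\chi_{can}\bigl(E_{\cals\iij\setminus(\cals'\cup B)}\bigr)-\bigl(x_n+l'_{[k]},E_{\cals\iij\setminus(\cals'\cup B)}\bigr)\geq 1,
\]
since $\chi_{can}(E_I)$ equals the number of connected components of $I\subseteq\calj^*$ in the tree and is therefore $\geq 1$, while the second summand is $\geq 0$ by assumption; but Lemma \ref{lem:laufb} forces $\chi_{k_r}(x_n)\geq \chi_{k_r}(y^*)=\chi_{k_r}(y+E_{\cals\iij})$, contradiction. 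The tail stabilization (b) follows directly from the equality together with the Laufer non-increase.

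For Part (II) the containment $\widetilde{\cals}\subseteq\cals\iij$ is obtained by inspecting the equality case of Lemma \ref{lem:laufb}. Decomposing $x({\bf i})+E_{\overline{J}\cup\widetilde{\cals}}=y^*-y_1+y_2$ with $y_1,y_2\geq 0$ disjointly supported on $\calj^*$, equality forces each of the three non-negative excess terms in that proof to vanish; in particular $\chi_{k_r}(y_2)=0$, but for $y_2>0$ supported on the rational subgraph $\chi_{k_r}(y_2)=\chi_{can}(y_2)-(y_2,l'_{[k]})\geq 1+0>0$, hence $y_2=0$. Thus $x({\bf i})+E_{\overline{J}\cup\widetilde{\cals}}\leq y^*$, and $E_{\widetilde{\cals}}\leq y^*-y$ forces $\widetilde{\cals}\subseteq\cals\iij$. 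The Laufer sequence from $y$ to $y+E_{\widetilde{\cals}}$ is then built in exactly the same greedy style as in Part (I): at a state $y+E_A$ with $A\subsetneq\widetilde{\cals}$, a fresh $j\in\widetilde{\cals}\setminus A$ with $(y+E_A+l'_{[k]},E_j)>0$ is forced by the identical contradiction, now using the hypothesis $\chi_{k_r}(y+E_{\widetilde{\cals}})=\chi_{k_r}(y^*)$ directly.

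For Part (III) the sequence goes upward by increments in $\calj^*\setminus\cals\iij$ with the reversed monotonicity $(x_n+l'_{[k]},E_{j(n)})\leq 1$. I reduce to a purely combinatorial claim in the sub-lattice of $\calj^*\setminus\cals\iij$: any $l^*>0$ admits a sequence $0=l_0<l_1<\cdots<l_t=l^*$ with $l_{n+1}=l_n+E_{j(n)}$, $j(n)\in|l^*-l_n|$, and $(l_n,E_{j(n)})\leq 1$; this is proved by induction on $\sum_j m_j(l^*)$, choosing at each step a vertex $j\in|l^*|$ with $(l^*,E_j)\leq 0$ (which exists by negative-definiteness applied to $l^*$, since $(l^*,l^*)<0$) and using $(l^*-E_j,E_j)=(l^*,E_j)-E_j^2\leq 1$. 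Setting $x_n:=y+E_{\cals\iij}+l_n$, the identity
\[
(x_n+l'_{[k]},E_{j(n)})=(y^*+l'_{[k]},E_{j(n)})-(u,E_{j(n)})+(l_n,E_{j(n)})
\]
then gives the bound, since $(y^*+l'_{[k]},E_{j(n)})\leq 0$ by \ref{lemF1}(b), $(u,E_{j(n)})\geq 0$ (as $u\geq 0$ is supported on $\cals\iij$ and $j(n)\notin\cals\iij$, so only non-negative adjacency cross-terms appear), and $(l_n,E_{j(n)})\leq 1$ by construction. The hardest step is the bootstrapping equality $\chi_{k_r}(y+E_{\cals\iij})=\chi_{k_r}(y^*)$ in Part (I), because proving it needs a Laufer sequence from $x({\bf i})$ to $y^*$ routed through $y+E_{\cals\iij}$, and the routing in turn uses the equality; I would break the circularity by induction on $|\overline{J}|$, with Proposition \ref{propF1} as the base case $|\overline{J}|=1$ and the conclusion of Part (I) for smaller $\overline{J}$ controlling the $\calj^*$-Laufer sub-sequences interleaved between the bad-vertex additions.
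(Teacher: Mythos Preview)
Your overall architecture is sound and close to the paper's, but there is a genuine gap in Part~(III) and an unnecessary detour in Part~(I).

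\textbf{Part (III): the gap.} Your ``purely combinatorial claim'' asserts that for any $l^*>0$ one can peel off a vertex $j\in|l^*|$ with $(l^*-E_j,E_j)\leq 1$, and you justify the choice of $j$ by negative definiteness: pick $j$ with $(l^*,E_j)\leq 0$, then $(l^*-E_j,E_j)=(l^*,E_j)-E_j^2\leq 1$. This last inequality is false unless $E_j^2=-1$. For instance, on a string of three $(-2)$-curves $E_1-E_2-E_3$ with $l^*=E_1+2E_2+E_3$, one has $(l^*,E_1)=0\leq 0$ but $(l^*-E_1,E_1)=2$. Negative definiteness only guarantees $(l^*,E_j)\leq 0$ for \emph{some} $j$, not $(l^*,E_j)\leq E_j^2+1$. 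The correct argument (which the paper uses) is rationality: if $(l^*-E_j,E_j)\geq 2$ for every $j\in|l^*|$, then $(E_j,l^*+k_{can})\geq 0$ for all such $j$, whence $(l^*,l^*+k_{can})\geq 0$, contradicting $\chi_{can}(l^*)>0$ on the rational subgraph supported by $\calj^*\setminus\cals\iij$. Once you fix this, the rest of your Part~(III) goes through.

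\textbf{Part (I): the detour.} You correctly single out the equality $\chi_{k_r}(y+E_{\cals\iij})=\chi_{k_r}(y^*)$ as pivotal, but then tie yourself in knots trying to route a sequence from $x({\bf i})$ through $y+E_{\cals\iij}$ and invoke induction on $|\overline{J}|$ to break a perceived circularity. None of this is needed: the equality is exactly the content of (b), and it can be proved \emph{directly} by taking any Laufer sequence from $y+E_{\cals\iij}$ to $y^*$, setting $z_n=x_n-x({\bf i})$, and observing that on each connected component $C$ of $\overline{J}\cup\cals\iij$ the restricted sequence $z_n|_C$ starts at $E_C$, has $(z_n|_C,E_{j(n)})>0$ at each step (hence stays $\leq z_{min}(C)$), and lives in a rational subgraph of $\widetilde{G}$ since $j(n)\in\calj^*$ throughout; Laufer's Criterion then forces $(z_n,E_{j(n)})=1$, hence $(x_n+l'_{[k]},E_{j(n)})=1$. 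With the equality in hand your $\chi$-contradiction for the greedy first phase is a clean alternative to the paper's component-tracking contradiction for~(a). Your Part~(II) is fine and pleasantly different from the paper's shifting argument: analysing the equality case of Lemma~\ref{lem:laufb} to force $y_2=0$ is a nice shortcut.
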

\begin{proof}
(I) We will use the following notation: for any $x\geq x({\bf i})+E_{\overline{J}}$ we write
 $\|x\|$ for the support $|x- x({\bf i})-E_{\overline{J}}|$. Note that Lemma \ref{lem:lauf}
guarantees the existence of a  computation sequence connecting $x({\bf i})+E_{\overline{J}\cup \cals'}$ with
$x({\bf i}+1_{\overline{J}})$.  We consider such a sequence $\{x_n\}_{n=0}^t$ constructed in such a way
that in the procedure of choices of $j(n)$'s  at the first steps
we try to  increase  $\|x_n\|$ as much as possible.
 More precisely, for any $0\leq n<t_1$, the index $j(n)\in\calj^*$ is chosen as follows:
 \begin{equation}\label{eq:jn}
 \left\{\begin{array}{l} (x_n+l'_{[k]},E_{j(n)})>0 \\
 E_{j(n)}\not\in \|x_n\|.\end{array}\right.
 \end{equation}
 Assume that this  stops for $n=t_1$, that is,
for $n=t_1$ there is no index $j(n)\in\calj^*$ which would  satisfy (\ref{eq:jn}).
We claim that
$\|x_{t_1}\|=\|x({\bf i}+1_{\overline{J}})\|=\cals\iij$,
hence $t_s=t_1$ satisfies part (a) of the proposition.

Indeed, assume that this is not the case. Then we continue the construction of the sequence, and let
$t_2+1$ be the first index when $\|x\|$ increases again, that is $\|x_n\|=\|x_{t_1}\|$ for
$t_1\leq n\leq t_2$ and $\|x_{t_2+1}\|=\|x_{t_1}\|\cup \{j^*\}\not=\|x_{t_1}\|$ for some $j^*\in\calj^*$.
Hence $j^*=j(t_2)$.

Since $(x_{t_2}+l'_{[k]},E_{j^*})>0$ and $(x_{t_1}+l'_{[k]},E_{j^*})\leq 0$,
we get $(x_{t_2}-x({\bf i}),E_{j^*})>-(x({\bf i})+l'_{[k]},E_{j^*})\geq (x_{t_1}-x({\bf i}),E_{j^*})$.
Since $x_{t_2}-x({\bf i})$ and $x_{t_1}-x({\bf i})$ have the same support, which does not contain $j^*$,
this strict inequality can happen only if  $(x_{t_1}-x({\bf i}),E_{j^*})>0$.
By the same argument, in fact, there exists a connected component $C$ of the reduced cycle $x_{t_1}-x({\bf i})$
such that \begin{equation}\label{eq:GG}
((x_{t_2}-x({\bf i}))|_C,E_{j^*})> (C,E_{j^*})>0.\end{equation}
Next, we analyze the restriction of the sequence $z_n:=x_n-x({\bf i})$ to $C$ for $t_1\leq n\leq t_2$.
First note that $(z_n,E_{j(n)})=(x_n+l'_{[k]},E_{j(n)})-(x({\bf i})+l'_{[k]},E_{j(n)})>0$.
If $E_{j(n)}$ is supported by $C$ then it does not intersect any other components of $x_{t_1}-x({\bf i})$,
hence $(z_n|_C,E_{j(n)})>0$ too.
Let us consider that subsequence $\tilde{z}_*$
of $z_n|_C$ which is obtained from $z_n|_C$ by eliminating those steps from the computation sequence
of $\{x_n\}_{n=t_1}^{t_2}$  which correspond to elements $j(n)$ not supported by $C$.
Then the sequence starts with $E_C$, ends with $(x_{t_2}-x({\bf i}))|_C$, it is the beginning of a Laufer
sequence connecting the {\it connected}
 $E_C$ with the fundamental cycle of $C$, but at the step $t_2$ one has
$(z_{t_2}|_C,E_{j(t_2)})\geq 2$, cf. (\ref{eq:GG}).

Note also that the sequence $z_n|_C$ is reduced along $\overline{J}$, hence along the procedure we do not add any base element from $\overline{J}$, hence if we decrease the self--intersections of these vertices we will not
modify the Laufer data along the sequence. Hence, we can assume that $C$ is  supported by a
rational graph.
But this contradicts the existence of  $\tilde{z}_*$, cf. \ref{LC}.

Part (b) uses the same argument.
We fix a connected component  of $x_{t_s}-x({\bf i})$. Since in the Laufer steps the components do not
interact, we can even assume that the support of $x_{t_s}-x({\bf i})$ is connected.
 Then  $x_n-x({\bf i})$ for  $n\geq t_s$ is part of the computations sequence connecting
 the reduced connected $x_{t_s}-x({\bf i})$ to its fundamental cycle. Since we may assume that
  $C$ is rational (since the steps do not involve $\overline{J}$),
  along the sequence we must have
 $(x_n-x({\bf i}),E_{j(n)})=1$  by \ref{LC}. This happens only if
  $(x_n+l'_{[k]},E_{j(n)})=1$ and $(x({\bf i})+l'_{[k]},E_{j(n)})=0$.

(II) Assume that $\widetilde{\cals}\not\subseteq \cals\iij$, and set
$\cals':=  \widetilde{\cals}\cap\cals\iij$
and  $\Delta\cals:=  \widetilde{\cals}\setminus \cals\iij$.
Take  a computation sequence $\{x_n\}_{n=0}^t$ as in (I) connecting
$x({\bf i})+E_{\overline{J}\cup \cals'}$ with $x({\bf i}+1_{\overline{J}})$.
 Since $\chi_{k_r}(x_n)$ is non--increasing, cf.  \ref{lem:lauf},
$1-(E_{j(n)}, x_n+l'_{[k]})\leq 0$.
Therefore,
$1-(E_{j(n)}, x_n+E_{\Delta\cals}+l'_{[k]})\leq 0$
too, since $j(n)\not\in \Delta\cals$. Since $\{x_n+E_{\Delta\cals}\}_n$ connects
$x({\bf i})+E_{\overline{J}\cup \widetilde{\cals}}$ with $x({\bf i}+1_{\overline{J}})+E_{\Delta\cals}$, we get
$$\chi_{k_r}(x({\bf i})+E_{\overline{J}\cup \widetilde{\cals}})\geq \chi_{k_r}
(x({\bf i}+1_{\overline{J}})+E_{\Delta\cals}).$$
This together  with assumption (\ref{eq:sss}) and Lemma \ref{lem:laufb} guarantee that, in fact,
\begin{equation}\label{eq:EQ}
\chi_{k_r}(x({\bf i}+1_{\overline{J}})+E_{\Delta\cals})=
\chi_{k_r}(x({\bf i}+1_{\overline{J}})).\end{equation}
On the other hand,
\begin{equation*}
\chi_{k_r}(x({\bf i}+1_{\overline{J}})+E_{\Delta\cals})-
\chi_{k_r}(x({\bf i}+1_{\overline{J}}))
=\chi_{can}(E_{\Delta\cals})-(E_{\Delta\cals},
x({\bf i}+1_{\overline{J}})+l'_{[k]})\geq \chi_{can}(E_{\Delta\cals}),
\end{equation*}
where the last inequality follows from the definition of $x({\bf i}+1_{\overline{J}})$. Since
$\chi_{can}(E_{\Delta\cals})$ is the number of connected components of $E_{\Delta\cals}$, it is strictly
positive, a fact which contradicts (\ref{eq:EQ}).

For the second part we construct a computation sequence as in (I), applied for
$\cals'=0$, in such a way that first we choose only the $j(n)$'s from $\widetilde{\cals}$. We claim that in this
way we fill in all $\widetilde{\cals}$. Indeed, assume that this procedure stops at the level of $x_m$; that is,
$x({\bf i})+E_{\overline{J}}\leq x_m <  x({\bf i})+E_{\overline{J}\cup \widetilde{\cals}}$
and
\begin{equation}\label{eq:ineqtilde}
(E_j,x_m+l'_{[k]})\leq 0 \ \ \mbox{for all $j\in \Delta\widetilde{\cals}:=
\widetilde{\cals}\setminus ||x_m||$}.
\end{equation}
Then
$$\chi_{k_r}( x({\bf i})+E_{\overline{J}\cup \widetilde{\cals}})
-\chi_{k_r}(x_m)=\chi_{can}(E_{\Delta\widetilde{\cals}})-
(E_{\Delta\widetilde{\cals}}, x_m+l'_{[k]})\geq
\chi_{can}(E_{\Delta\widetilde{\cals}}),$$
where the last inequality follows from (\ref{eq:ineqtilde}). Since
$\chi_{can}(E_{\Delta\widetilde{\cals}})>0$,   the
assumption (\ref{eq:sss}) imply
 $\chi_{k_r}(x_m)<\chi_{k_r}( x({\bf i}
+1_{\overline{J}}))$, a fact which contradicts Lemma \ref{lem:laufb}.

(III) The statement follows by induction from the following fact: if $l^*>0$,
$|l^*|\subseteq\calj^*\setminus \cals\iij$, then there exists $j\in|l^*|$ so that
$$\chi_{k_r}(x({\bf i})+E_{\overline{J}\cup\cals\iij}+l^*-E_j)\leq
\chi_{k_r}(x({\bf i})+E_{\overline{J}\cup\cals\iij}+l^*).$$
Indeed, if not, then $(E_j,   x({\bf i})+E_{\overline{J}\cup\cals\iij}+l'_{[k]}+l^*-E_j)\geq 2$
for any $j\in |l^*|$. On the other hand,
$(E_j,  x({\bf i})+E_{\overline{J}\cup\cals\iij}  +l'_{[k]})\leq 0$, by the proof of part (I) (namely, the choice of
$t_s=t_1$), or by the definition of $\cals\iij$. Therefore,
$(E_j,l^*-E_j)\geq 2$, or, $(E_j,l^*+k_{can})\geq 0$ for all $j$. Summing up over the coefficients of $l^*$,
we get $(l^*,l^*+k_{can})\geq 0$, which contradicts (\ref{eq:artin}) since the subgraph generated by $|l^*|$
is rational.
\end{proof}

\subsection{The lattice reduction.}\labelpar{ss:reduction}
Now we are ready to formulate the main result of this section: in the definition of
the lattice cohomology we wish to replace the (cubes of the) lattice $L$ with cubes of a smaller rank
free $\Z$--module associated with the bad vertices.

\bekezdes\label{bek:331}
 {\bf Definition of the (quadrant of the) new free $\Z$--module.}   Let us fix $[k]$ and
and a set of $\mu$ bad vertices. We define  $\overline{L}=(\Z_{\geq 0})^\nu$ and the function
$\overline{w}_0:(\Z_{\geq 0})^\nu\to \Z$ by
\begin{equation}
\overline{w}_0(i_1,\ldots,i_\nu):=\chi_{k_r}(x(i_1,\ldots,i_\nu)).
\end{equation}
 Then $\overline{w}_0$ defines a set $\{\overline{w}_q\}_{q=0}^\nu$ of compatible weight functions
 depending on $[k]$, defined similarly as in \ref{ex:ww}, denoted by $\overline{w}[k]$.

\begin{theorem}{\bf (Reduction Theorem)}\labelpar{red} \
Let $G$ be a negative definite connected graph and let $k_r$ be the distinguished representative
of a characteristic class. Suppose $\overline{\calj}=\{j_k\}_{k=1}^\nu$ is a (non--necessarily minimal)
set of bad vertices,  and $(\overline{L},\overline{w}[k])$ is the
first quadrant of the new weighted free $\Z$--module
associated with $\overline{\calj}$ and $k_r$. Then there is a graded $\Z[U]$--module isomorphism
\begin{equation}\label{eq:reda}
\bH^*(G,k_r)\cong\bH^*(\overline{L},\overline{w}[k]).
\end{equation}
\end{theorem}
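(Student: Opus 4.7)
My strategy is to compute both lattice cohomologies via their $\bS^*$--realization (Theorem~\ref{th:HS}) and to compare the two sides through the natural projection onto bad coordinates. By Lemma~\ref{propF2}(a) I may work on the first quadrant $[0,\infty)^s$. For each $N\in\Z$, let $S_N$ denote the CW--subcomplex of $[0,\infty)^s$ formed by cubes of $\chi_{k_r}$--weight $\leq N$, and let $\overline{S}_N$ denote the analogous subcomplex of $(\Z_{\geq 0})^\nu$ for $\overline{w}[k]$. The Theorem is reduced to constructing, for every $N$, a homotopy equivalence $S_N\simeq\overline{S}_N$ that is natural with respect to the inclusions $S_N\hookrightarrow S_{N+1}$ which encode the $U$--action.

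The candidate map is the cellular coordinate projection $\pi\colon[0,\infty)^s\to(\Z_{\geq 0})^\nu$ sending $(l,I)\mapsto(\pi(l),I\cap\overline{\calj})$. That $\pi$ restricts to $S_N\to\overline{S}_N$ is a direct consequence of Lemma~\ref{lem:laufb}: if $(l,I)\in S_N$ and $\overline{I}'\subseteq I\cap\overline{\calj}$, then the lattice point $l+E_{\overline{I}'}$ shares its bad coordinates with $\pi(l)+1_{\overline{I}'}$, hence
\[\overline{w}_0(\pi(l)+1_{\overline{I}'})\,=\,\chi_{k_r}(x(\pi(l)+1_{\overline{I}'}))\,\leq\,\chi_{k_r}(l+E_{\overline{I}'})\,\leq\,N,\]
so the projected cube has $\overline{w}$--weight at most $N$. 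The main content is then a Leray--type assertion: for every closed reduced cube $\overline{\square}=({\bf i},\overline{I})\subseteq\overline{S}_N$, the preimage $\pi^{-1}(\overline{\square})\cap S_N$ is non-empty and contractible, which forces $\pi^{*}$ to be a cohomology isomorphism. Non-emptiness is already delicate, because the naive lift $(x({\bf i}),\overline{I})$ is generally \emph{not} contained in $S_N$; Proposition~\ref{prop:G}(I) supplies the remedy by producing generalized Laufer sequences from $x({\bf i})+E_{\overline{I}}+E_{\cals'}$ to $x({\bf i}+1_{\overline{I}})$ along which $\chi_{k_r}$ never exceeds $\overline{w}(\overline{\square})\leq N$, hence providing honest cubes of $S_N$ lying in the fiber.

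The hard step, and the one I expect to demand the most effort, is contractibility of each fiber. Proposition~\ref{prop:G} is tailor--made for this and will be used in two coordinated stages. \emph{Downward collapse}: starting from an arbitrary cube of $\pi^{-1}(\overline{\square})\cap S_N$, part (II) furnishes a Laufer sequence in the non-bad coordinates which is non-increasing in $\chi_{k_r}$ and terminates inside the distinguished ``core cube'' built from $x({\bf i})$ and $E_{\overline{I}\cup\cals\iij}$; part (I) then ensures that these local collapses match consistently on the sub-cubes of $\overline{\square}$ corresponding to the various corners ${\bf i}+1_{\overline{I}'}$, so the retractions glue to a cellular deformation. \emph{Upward freedom}: part (III) says that adjoining any positive $l^{*}$ supported in $\calj^{*}\setminus\cals\iij$ to the core can be realized by Laufer steps along which $\chi_{k_r}$ is non-decreasing, so the fiber is ``star-shaped'' relative to the core and no high-weight excursion can obstruct the contraction. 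Iterating and patching these moves across the sub-cubes of $\overline{\square}$ produces an explicit strong deformation retract of the fiber onto the core cube, and ultimately onto a single point.

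With contractible fibers established, a standard Vietoris--type theorem (equivalently, the Leray spectral sequence of the cellular map $\pi$, which degenerates for dimension reasons) yields $\pi^{*}\colon H^{*}(\overline{S}_N)\xrightarrow{\sim}H^{*}(S_N)$. Naturality of $\pi$ in $N$ makes this compatible with the $U$--action defined by restrictions $H^{*}(S_{N+1})\to H^{*}(S_N)$, and the two gradings match by the very definition $\overline{w}_0=\chi_{k_r}\circ x$. Summing over $N$ and invoking Theorem~\ref{th:HS} on both sides produces the graded $\Z[U]$--module isomorphism $\bH^{*}(G,k_r)\cong\bH^{*}(\overline{L},\overline{w}[k])$ of the Reduction Theorem.
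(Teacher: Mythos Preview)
Your overall architecture---projection to bad coordinates, $\bS^*$--realization, and a Leray/Mayer--Vietoris argument reducing to ``each fiber is non-empty and contractible''---is exactly the paper's route. The serious gap is in your treatment of non-emptiness. You claim Proposition~\ref{prop:G}(I) produces a cube of $S_N$ over $({\bf i},\overline{I})$, but that proposition, applied to a fixed $\overline{J}$, only gives a lattice point $x({\bf i})+E_{\overline{J}\cup\cals({\bf i},\overline{J})}$ with $\chi_{k_r}$ equal to $\chi_{k_r}(x({\bf i}+1_{\overline{J}}))$. To get an $|\overline{I}|$--cube inside $S_N$ you must exhibit a \emph{single} support $\widetilde{\cals}\subseteq\cals({\bf i},\overline{I})$ such that $\chi_{k_r}(x({\bf i})+E_{\overline{J}\cup\widetilde{\cals}})\leq N$ holds \emph{simultaneously for every} $\overline{J}\subseteq\overline{I}$. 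Since $\cals({\bf i},\overline{J})$ genuinely varies with $\overline{J}$, no single application of \ref{prop:G}(I) (or any finite iteration of it) yields this; the paper devotes an entire subsection (Proposition~\ref{th:nonempty} and its proof) to constructing $\widetilde{\cals}$ by an inductive combinatorial argument over $\sigma$--decorated graphs, with three preliminary reductions and a multi-branch case analysis. Nothing in your plan substitutes for this.

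Your contractibility sketch has the same flavor of gap. After the preliminary retraction to the finite box $R_{\ii}(0,E_{\cals\ii})$ (which the paper does via Lemmas~\ref{lem:con1}--\ref{lem:con4} and is close to your ``upward freedom''), one is \emph{not} left with a star-shaped set: the fiber $\Phi^*_N\ii$ sits inside a cube whose $N$--filtration can be complicated, and the target of the contraction is a cube $(x({\bf i})+E_{\widetilde{\cals}},\overline{I})$ whose distinguished vertex is neither a $\chi_{k_r}$--minimum nor a $\chi_{k_r}$--maximum. The paper's contraction proceeds by adding or deleting one connected component of $\widetilde{\cals}$ or of $\cals\ii\setminus\widetilde{\cals}$ at a time, and the order in which these moves are performed is governed by a separate combinatorial result, the Selection Procedure (Proposition~\ref{prop:sel}), which uses acyclicity of $G$ in an essential way. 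Your appeal to parts (II) and (III) of Proposition~\ref{prop:G} does not cover this: part (II) has the restrictive hypothesis~(\ref{eq:sss}), which a generic cube in the fiber does not satisfy, and part (III) only controls directions \emph{outside} $\cals\ii$. In short, your plan identifies the correct skeleton but is missing both of the substantive constructions (the uniform support $\widetilde{\cals}$ and the Selection Procedure) that make the fiber argument go through.
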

Note that via Lemma \ref{propF2}, (\ref{eq:reda}) is equivalent to the isomorphism:
\begin{equation}\label{eq:redb}
\bH^*([0,\infty)^s,k_r)\cong\bH^*([0,\infty)^\nu,\overline{w}[k]).
\end{equation}
%

\begin{corollary}\labelpar{corF1} Fix $\nu\geq 1$.
If a graph $G$ has $\nu$ bad vertices then $\bH^q(G,k)=0$ for any $q\geq \nu$ and $k\in Char$.
\end{corollary}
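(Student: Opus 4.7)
The plan is to deduce Corollary~\ref{corF1} directly from the Reduction Theorem~\ref{red}: once the rank of the underlying lattice has been cut down to $\nu$, the vanishing in degrees $\geq\nu$ becomes essentially a dimension count.

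First I would reduce to the case of a distinguished representative. For an arbitrary $k\in Char$, Remark~\ref{re:depk} provides a graded $\Z[U]$--module isomorphism $\bH^*(G,k)\cong \bH^*(G,k')[\text{shift}]$ whenever $[k]=[k']$. Choosing $k'=k_r$ inside the class $[k]$, the vanishing of $\bH^q(G,k)$ for $q\geq \nu$ reduces to that of $\bH^q(G,k_r)$.

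Next, the Reduction Theorem identifies $\bH^*(G,k_r)\cong \bH^*(\overline{L},\overline{w}[k])$ with $\overline{L}=(\Z_{\geq 0})^\nu$. In the cochain realization of \S\ref{complex}, the cochain group $\calF^q$ is free over the set of $q$-cubes of $(\Z_{\geq 0})^\nu$; since the quadrant carries no cubes in dimension $>\nu$, we have $\calF^q=0$ for $q>\nu$, and consequently $\bH^q(\overline{L},\overline{w}[k])=0$ for every $q>\nu$.

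The borderline case $q=\nu$ is treated by passing to the topological $\bS^*$-realization of Theorem~\ref{th:HS}, which yields $\bH^\nu(\overline{L},\overline{w}[k])=\bigoplus_N H^\nu(S_N,\Z)$, with $S_N\subset [0,\infty)^\nu$ the cubical subcomplex on which $\overline{w}\leq N$. By the finiteness condition \ref{weight}(a) each $S_N$ is a finite subcomplex, hence is contained in a sufficiently large closed rectangle $R=\prod_{k=1}^\nu [0,T_k]$. Because $R$ is contractible and the relative cellular complex of the pair $(R,S_N)$ has no cells in dimension $>\nu$, the long exact sequence
\[ H^\nu(R)\longrightarrow H^\nu(S_N)\longrightarrow H^{\nu+1}(R,S_N) \]
has both outer terms equal to zero, forcing $H^\nu(S_N)=0$ for every $N$, and thus $\bH^\nu(\overline{L},\overline{w}[k])=0$.

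The only real work lies in the Reduction Theorem itself, which is the main result of the section; once that is granted, the corollary is immediate. The only mild subtlety is the top degree $q=\nu$, where the purely algebraic cochain argument fails and one has to invoke the geometric pair $(R,S_N)$ to push past the dimension of the ambient cubical complex.
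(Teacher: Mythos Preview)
Your proof is correct and follows essentially the same approach as the paper: apply the Reduction Theorem and then use that each level set $\overline{S}_N$ is a compact cubical subcomplex of $\R^\nu$, so its cohomology vanishes in degrees $\geq\nu$. The paper's own argument is terser --- it simply asserts the vanishing from compactness in $\R^\nu$ without separating the cases $q>\nu$ and $q=\nu$ --- whereas you spell out the reduction from general $k$ to $k_r$ via Remark~\ref{re:depk} and give an explicit long exact sequence argument for the borderline degree $q=\nu$; both additions are welcome clarifications rather than genuine differences in strategy.
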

\begin{proof}
Theorems \ref{red} and \ref{th:HS} provide an
 isomorphism $\bH^*(G)=\oplus_{N}H^*(\overline{S}_N,\Z)$.
But $\overline{S}_N$ is a compact cubical subcomplex of $\R^\nu$, hence the vanishing follows.
\end{proof}
The statement of Corollary \ref{corF1}
was proved in  \cite{OSZINV} for $\nu = 1$, and in general in
\cite{Nexseq}  using surgery exact sequences of lattice cohomology.

\section{The proof of Reduction Theorem}

\subsection{Notations, assumption.} \label{ss:assumption}
In this section we abbreviate $k_r$ into $k$, $\overline{w}[k]$ into $\overline {w}$.

Assume that there exists a pair  $j, j'\in\ocalj$, $j\not=j'$, such that $(E_j,E_{j'})=1$. Then we can blow up
the intersection point $E_j\cap E_{j'}$. We have to observe two facts. First,
the lattice cohomology $\bH^*(G,k)$ is stable with respect to this blow up \cite{Nlat,Nexseq}. Second,
the `strict transform' of the set $\ocalj$ can serve as a new set of bad vertices
and the right hand side of (\ref{eq:reda}) stays stable as well. Therefore, by additional blow ups,
{\it  we can assume that}
$$(E_j, E_{j'})=0 \ \ \mbox{for every pair  $j, j'\in\ocalj$, $j\not=j'$.} $$

\subsection{The first step. Comparing $S_N$ and $\overline{S}_N$.}\label{Leray}

We consider the projections  $\phi:(\Z_{\geq 0})^s\to (\Z_{\geq 0})^\nu$
and  $\phi:[0,\infty)^s\to [0,\infty)^\nu$
given by $(m_j)_{j\in\calj}\mapsto (m_{j})_{j\in\ocalj}$. This induces a projection
of the cubes too. If $(l,I)\in \calQ(L)$ is a cube of $L$, then write $I$ as $\overline{I}\cup I^*$,
where
$\overline{I}=I\cap \ocalj$ and $I^*=I\cap \calj^*$. Then the vertices of $(l,I)$ are projected via $\phi$ into the
vertices of the cube $(\phi(l),\overline{I})\in \calQ(\overline{L})$ of $\overline{L}$.
It is convenient to write $\overline {I}:=\phi(I)$ and $\phi(l,I):=(\phi(l),\overline{I})$.

By \ref{lem:laufb}, we get that  for any $l\in (\Z_{\geq 0})^s$ we have $w(l)\geq \overline{w}(\phi(l))$, hence
\begin{equation}\label{eq:ineq}
w((l,I))\geq \overline{w}(\phi(l,I)) \ \  \ \mbox{for any cube $(l,I)\in \calQ(L)$}.
\end{equation}
Recall that for any $N$ we define $S_N\subseteq [0,\infty)^s$ as the union of cubes of
$[0,\infty)^s$ of weight $\leq N$.
Similarly, let $\overline{S}_N\subseteq [0,\infty)^\nu$ be the
union of cubes $({\bf i},\overline{I})$
 with $\overline{w}({\bf i},\overline{I})\leq N$.
 Then, the statement of Theorem \ref{red}, via Theorem
 \ref{th:HS},  is equivalent to the fact that
 \begin{equation}\label{eq:leray}
\mbox{ {\it
$S_N$ and $\overline{S}_N$ have the same cohomology groups for any integer $N$.}}
\end{equation}
%
%
Note that by (\ref{eq:ineq}) $\phi(S_N)\subseteq \overline{S}_N$, and by construction
 $\phi|_{S_N}:S_N\to \overline{S}_N$ is a cubical map.
For any $\ii\subseteq \overline{S}_N$
we consider $\phi^*_N\ii\subseteq S_N$ defined as
the union of all cubes $(l,I)\subseteq S_N$ with  $\phi(l,I)=\ii$.
[We warm the reader that this is not the inverse image
$(\phi|_{S_N})^{-1}\ii$, rather it is the closure of the inverse image of the interiour
of the cube $\ii$; see also below.] If $\psi:[0,\infty)^s\to [0,\infty)^{s-\nu}$ is the second projection
on the $\calj^*$--coordinate direction, then $\phi^*_N\ii$ is the product of $\psi(\phi^*_N\ii)$ with the
cube $\ii$; in particular, it has the homotopy type of $\psi(\phi^*_N\ii)$.

A Mayer--Vietoris inductive (or Leray type spectral sequence)
argument shows that (\ref{eq:leray}) follows from
\begin{equation}\label{eq:leray3}
\mbox{ {\it
$\phi^{*}_N\ii$ is non--empty and contractible  for any $\ii\in\overline{S}_N$.}}
\end{equation}

\subsection{Generalities about contractions.}\label{ss:2}
 {\it In the sequel we fix a cube $\ii$ from $\overline{S}_N$}  and we start
to prove (\ref{eq:leray3}).  For any such cube $\ii$ we also consider the inverse image
$\phi^{-1}\ii$ consisting of the union of
all cubes $(l,I)$ of $[0,\infty)^s$ with $\phi(l,I)\subseteq \ii$ (not
necessarily from  $S_N$). We can also consider $(\phi|_{S_N})^{-1}\ii$,
the union of cubes $(l,I)$ from $S_N$ with $\phi(l,I)\subseteq \ii$.
Clearly, $$\phi^*_N\ii\subseteq (\phi|_{S_N})^{-1}\ii\subseteq \phi^{-1}\ii.$$
Note that $\phi^{-1}\ii$ is the product of the cube   $\ii$ with $[0,\infty)^{s-\nu}$.
Our goal is to contract this `fiber direction  space' $[0,\infty)^{s-\nu}$
in such a way that  along the contraction $\chi_{k}$ does not increase, and the
contraction preserves the subspaces $\phi^*_N\ii$ and $(\phi|_{S_N})^{-1}\ii$ as well.

The cycles supported on $\calj^*$ (`fiber direction') will be denoted by  $l^*=\sum_{j\in\calj^*}m_jE_j$.
For any pair $l_1^*$ and $l_2^*$ with
$l_1^*\leq l_2^*$ we consider the real $s$--dimensional rectangle
 $R_{\ii}(l_1^*,l_2^*)$, the product of a rectangle in the $(s-\nu)$--dimensional space
with the cube $\ii$:  it is the convex closure of the lattice points, which have the form
$$x({\bf i})+E_{\overline{J}}+l^* \ \ \mbox{with} \ \ \ \overline{J}\subseteq \overline{I} \ \ \
\mbox{and} \ \ l^*\in  L, \ \ l_1^*\leq l^*\leq l_2^*.$$
We extend this notation allowing $l_2^*$ to have all its entries  $\infty$.

Note that the lattice points $x({\bf i})+E_{\overline{J}}+l^*$, being in $[0,\infty)^s$, are effective,
hence the relevant $l^*$ satisfies $l^*\geq l^*_{1,min}:=-x({\bf i})+\sum_{j\in\ocalj}i_jE_j$
(the projection of $-x({\bf i})$ on the $\calj^*$-components). In particular,
 $R_{\ii}(l^*_{1,min},\infty)=\phi^{-1}\ii\subseteq [0,\infty)^s$, and  we can assume
 that $l_1^*$ and $l_2^*$ satisfy $l^*_{1,min}\leq l_1^*\leq l_2^*\leq \infty$. Note also that
 $l_{1,min}^*\leq 0$.

\vspace{2mm}

We start to discuss the existence of  a contraction
$c:R_{\ii}(l_1^*,l_2^*+E_j)\to R_{\ii}(l_1^*,l_2^*)$ for some $j\in\calj^*$, acting in the direction of the
$\calj^*$--coordinates and having the property that
 $\chi_k$ will  not increase along it. The map
$c$ is defined  as follows. If a lattice point $l$ is in $R_{\ii}(l_1^*,l_2^*)$, then
 $c(l)=l$. Otherwise $l$ has the form $l=x({\bf i})+E_{\overline{J}}+l^*+E_j$ for some
 $l^*$ with $l_1^*\leq l^*\leq l_2^*$ and $m_j(l^*)=m_j(l^*_2)$. Then set $c(l)=l-E_j$.
The next criterion guarantees that $\chi_k$ does not increase along this contraction.

\begin{lemma}\label{lem:con1}
Assume that for some $l_2^*$ and $j\in\calj^*$ one has
$$\chi_k(x({\bf i})+E_{\overline{I}}+l^*_2+E_j)\geq \chi_k(x({\bf i})+E_{\overline{I}}+l^*_2).
$$
Then, for any $l^*$ with $l_1^*\leq l^*\leq l_2^*$ and $m_j(l^*)=m_j(l^*_2)$,
and for every $\overline{J}\subseteq \overline{I}$, one also has
$$\chi_k(x({\bf i})+E_{\overline{J}}+l^*+E_j)\geq \chi_k(x({\bf i})+E_{\overline{J}}+l^*).
$$
Therefore, $\chi_k(c(l))\leq \chi_k(l)$ for any $l\in  R_{\ii}(l_1^*,l_2^*+E_j)$.
\end{lemma}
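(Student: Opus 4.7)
The plan is to recast both $\chi_k$--inequalities as inequalities between intersection numbers. Starting from $\chi_k(l)=-(l,l+k)/2$ and using the adjunction relation $(E_j,k_{can})=-(E_j,E_j)-2$ together with $k=k_{can}+2l'_{[k]}$, one gets the standard identity
\[
\chi_k(a+E_j)-\chi_k(a)\;=\;1-(E_j,\,a+l'_{[k]}).
\]
Thus the hypothesis of the lemma is equivalent to $(E_j,\,x({\bf i})+E_{\overline{I}}+l^*_2+l'_{[k]})\leq 1$, and the target inequality is equivalent to $(E_j,\,x({\bf i})+E_{\overline{J}}+l^*+l'_{[k]})\leq 1$. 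So it will suffice to prove the monotonicity relation
\[
(E_j,\,E_{\overline{J}}+l^*)\;\leq\;(E_j,\,E_{\overline{I}}+l^*_2).
\]

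First I would split the difference as
\[
(E_j,E_{\overline{I}}+l^*_2)-(E_j,E_{\overline{J}}+l^*)\;=\;(E_j,E_{\overline{I}\setminus\overline{J}})\;+\;(E_j,l^*_2-l^*),
\]
and check that each summand is non--negative. Since the underlying graph is a tree, every off--diagonal entry $(E_j,E_i)$ with $i\neq j$ is $0$ or $1$. The first summand ranges over indices in $\overline{I}\subseteq\ocalj$, all of which differ from $j\in\calj^*$, hence it is $\geq 0$. For the second, the cycle $l^*_2-l^*\geq 0$ is supported on $\calj^*$ and satisfies $m_j(l^*_2-l^*)=0$ by assumption; expanding the intersection, the potentially dangerous term $m_j(l^*_2-l^*)\cdot(E_j,E_j)$ vanishes, and every remaining term is a product of a non--negative coefficient with a non--negative off--diagonal intersection, so the second summand is also $\geq 0$.

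The final assertion $\chi_k(c(l))\leq\chi_k(l)$ is then immediate from the definition of the contraction $c$. If $l\in R_{\ii}(l_1^*,l_2^*)$ then $c(l)=l$ and there is nothing to show. Otherwise $l=x({\bf i})+E_{\overline{J}}+l^*+E_j$ with $\overline{J}\subseteq\overline{I}$, $l_1^*\leq l^*\leq l_2^*$, and $m_j(l^*)=m_j(l_2^*)$, so $c(l)=l-E_j$ and the first part of the lemma (applied to $a=x({\bf i})+E_{\overline{J}}+l^*$) gives exactly $\chi_k(l)=\chi_k(c(l)+E_j)\geq\chi_k(c(l))$.

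There is no serious obstacle here; the argument is a linear--algebra translation followed by two sign checks. The only thing to watch carefully is the bookkeeping of supports: one must verify that neither comparison touches the self--intersection $(E_j,E_j)<0$, which is guaranteed by $j\in\calj^*\setminus\overline{I}$ and by the assumption $m_j(l^*)=m_j(l_2^*)$ that forces $j\notin|l_2^*-l^*|$.
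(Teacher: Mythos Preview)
Your argument is correct and follows essentially the same route as the paper's proof: both reduce via the identity $\chi_k(z+E_j)=\chi_k(z)+1-(E_j,z+l'_{[k]})$ to the single non--negativity check $(E_j,\,E_{\overline{I}}-E_{\overline{J}}+l^*_2-l^*)\geq 0$, which holds because $j$ lies outside the support of the effective cycle $E_{\overline{I}}-E_{\overline{J}}+l^*_2-l^*$. Your writeup simply spells out the support verification that the paper leaves implicit.
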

\begin{proof} Use $\chi_k(z+E_j)=\chi_k(z)+1-(E_j,z+l'_{[k]})$ and
$(E_j, E_{\overline{I}}-E_{\overline{J}}+l^*_2-l^*)\geq  0$.\end{proof}
The following lemma generalizes  results of \cite[\S\,3.2]{Nlat}, where the case $\nu=1$ is treated.
\begin{lemma}\label{lem:con2}
Assume that for some fixed $l_2^*$ there exists an infinite sequence of cycles
$\{x^*_n\}_{n\geq 0}$, $x^*_n=\sum_{j\in\calj^*}m_{j,n}E_j$,  with $x^*_0=l_2^*$ such that

\begin{itemize}
\item[(a)] $x^*_{n+1}=x^*_n+E_{j(n)}$ for some $j(n)\in\calj^*$, $n\geq 0$;
\item[(b)] $\chi_k(x({\bf i})+E_{\overline{I}}+x^*_{n+1})\geq \chi_k(x({\bf i})+E_{\overline{I}}+x^*_n)$
for any $n\geq 0$.
\item[(c)] for any fixed $j$ the sequence $m_{j,n}$ tends to infinity as $n$ tends to infinity;
\end{itemize}
Then there exists a contraction of  $R_{\ii}(l_1^*,\infty)$ to  $R_{\ii}(l_1^*, l_2^*)$
along which $\chi_k$ is non--increasing.
\end{lemma}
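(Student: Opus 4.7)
The plan is to iterate Lemma \ref{lem:con1} along the given sequence $\{x_n^*\}$ read in reverse. For each $n\geq 0$, hypothesis (b) at index $n$ reads
\[
\chi_k(x({\bf i})+E_{\overline{I}}+x_n^*+E_{j(n)})\geq \chi_k(x({\bf i})+E_{\overline{I}}+x_n^*),
\]
which is exactly the input required by Lemma \ref{lem:con1} with the roles of $l_2^*$ and $j$ played by $x_n^*$ and $j(n)$ respectively. This produces a strong deformation retraction $c_n$ of $R_{\ii}(l_1^*,x_{n+1}^*)=R_{\ii}(l_1^*,x_n^*+E_{j(n)})$ onto its subrectangle $R_{\ii}(l_1^*,x_n^*)$ along which $\chi_k$ is non-increasing at lattice points; concretely, $c_n$ is the identity on $R_{\ii}(l_1^*,x_n^*)$ and elsewhere pushes the $j(n)$-coordinate down by one unit via the obvious linear homotopy inside the relevant cube.

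Composing inductively, set $\Phi_N := c_0\circ c_1\circ\cdots\circ c_{N-1}$. Then $\Phi_N$ is a continuous deformation retraction of $R_{\ii}(l_1^*,x_N^*)$ onto $R_{\ii}(l_1^*,l_2^*)$ with $\chi_k$ non-increasing at each intermediate lattice point, and $\Phi_{N+1}$ restricts to $\Phi_N$ on $R_{\ii}(l_1^*,x_N^*)$ (since $c_N$ is the identity there). Assumption (c) guarantees that for every $l\in R_{\ii}(l_1^*,\infty)$, the finitely many coordinates $m_j(l)$ ($j\in\calj^*$) are dominated by $m_{j,N}$ once $N$ is large enough, so $l\in R_{\ii}(l_1^*,x_N^*)$ for such $N$. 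Hence the direct limit $\Phi:=\lim_N \Phi_N$ is a well-defined retraction of $R_{\ii}(l_1^*,\infty)$ onto $R_{\ii}(l_1^*,l_2^*)$.

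Continuity of $\Phi$ is the one place that requires care, because $R_{\ii}(l_1^*,\infty)$ is non-compact. It follows from local stabilization: any compact $K\subseteq R_{\ii}(l_1^*,\infty)$ is contained in some $R_{\ii}(l_1^*,x_N^*)$, so $\Phi|_K=\Phi_N|_K$ and continuity of each $\Phi_N$ transfers to $\Phi$. To promote $\Phi$ to an honest deformation retraction, one concatenates the individual linear homotopies $H_n$ realizing the $c_n$ on a telescoping subdivision of $[0,1]$, for example on intervals $[1-2^{-n},\,1-2^{-(n+1)}]$; continuity at $t=1$ reduces once more to the same local stabilization argument, and $\chi_k$ is monotone along each $H_n$ by Lemma \ref{lem:con1}. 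The main (and essentially only) obstacle is this bookkeeping with non-compactness: every individual contraction step is a direct invocation of Lemma \ref{lem:con1}, and the role of assumption (c) is precisely to ensure that the sequence $\{x_n^*\}$ exhausts the fiber directions so that the telescope covers all of $R_{\ii}(l_1^*,\infty)$.
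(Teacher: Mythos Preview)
Your proof is correct and follows exactly the paper's approach: the paper's entire proof is ``Use Lemma \ref{lem:con1} and induction over $n$,'' and you have carried out precisely that, with the added service of spelling out the telescoping and local-stabilization arguments needed to pass from the finite-stage retractions to the limit on the non-compact $R_{\ii}(l_1^*,\infty)$.
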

\begin{proof} Use Lemma \ref{lem:con1} and induction over $n$. 
\end{proof}
Symmetrically, by similar proof, one has the following statements too.

\begin{lemma}\label{lem:con3} \

(I) \ For any  fixed $l_1^*$ and $j\in\calj^*$ with $l_1^*-E_j\geq l_{1,min}^*$ if
$$\chi_k(x({\bf i})+l^*_1-E_j)\geq \chi_k(x({\bf i})+l^*_1),
$$
then for any $l^*$ with $l_1^*\leq l^*\leq l_2^*$ and $m_j(l^*)=m_j(l^*_1)$,
and for every $\overline{J}\subseteq \overline{I}$, one also has
$$\chi_k(x({\bf i})+E_{\overline{J}}+l^*-E_j)\geq \chi_k(x({\bf i})+E_{\overline{J}}+l^*).
$$
Therefore, $ R_{\ii}(l_1^*-E_j,l_2^*)$ contracts onto $ R_{\ii}(l_1^*,l_2^*)$ such that
$\chi_k$ does non increase  along the contraction.

(II) \ Assume that there exists a sequence of cycles
$\{x^*_n\}_{n=0}^t$ with $x^*_0=l_{1,min}^*$  and  $x^*_t=l_{1}^*$ such that for any $0\leq n<t$
one has
\begin{itemize}
\item[(a)] $x^*_{n+1}=x^*_n+E_{j(n)}$ for some $j(n)\in\calj^*$,
\item[(b)] $\chi_k(x({\bf i})+x^*_{n})\geq \chi_k(x({\bf i})+x^*_{n+1})$.
\end{itemize}
Then there exists a contraction of  $R_{\ii}(l_{1,min}^*,l_2^*)$ to  $R_{\ii}(l_1^*, l_2^*)$
along which $\chi_k$ is non--increasing.
\end{lemma}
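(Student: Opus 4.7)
The plan is to treat (I) as the direct mirror image of Lemma~\ref{lem:con1}, via a parallel short computation, and then to deduce (II) from (I) by iteration, exactly as Lemma~\ref{lem:con2} is deduced from Lemma~\ref{lem:con1}.

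For part~(I), I would start from the identity $\chi_k(y+E_j)=\chi_k(y)+1-(E_j,y+l'_{[k]})$ used in the proof of Lemma~\ref{lem:con1}, rewriting it in the equivalent ``subtractive'' form
$$\chi_k(z-E_j)-\chi_k(z)=-1+(E_j,\,z-E_j+l'_{[k]}).$$
Substituting $z=x({\bf i})+l_1^*$ shows that the hypothesis of~(I) is equivalent to the single numerical inequality $(E_j,\,x({\bf i})+l_1^*-E_j+l'_{[k]})\geq 1$. Substituting instead $z=x({\bf i})+E_{\overline{J}}+l^*$, for arbitrary $l^*$ and $\overline{J}$ as in the statement, the same identity gives
$$\chi_k(x({\bf i})+E_{\overline{J}}+l^*-E_j)-\chi_k(x({\bf i})+E_{\overline{J}}+l^*)=-1+(E_j,\,x({\bf i})+l_1^*-E_j+l'_{[k]})+(E_j,E_{\overline{J}})+(E_j,l^*-l_1^*).$$
Because $j\in\calj^*$ and $\overline{J}\subseteq\ocalj$ are disjoint subsets of $\calj$, one has $(E_j,E_{\overline{J}})\geq 0$; likewise $l^*-l_1^*$ is an effective cycle whose $j$-coefficient vanishes and whose support lies in $\calj^*\setminus\{j\}$, so $(E_j,l^*-l_1^*)\geq 0$. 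Combined with the rephrased hypothesis, the right-hand side is $\geq 0$, which yields the desired pointwise inequality for every admissible $l^*$ and $\overline{J}$. The contraction $R_{\ii}(l_1^*-E_j,l_2^*)\to R_{\ii}(l_1^*,l_2^*)$ is then defined on lattice vertices as in Lemma~\ref{lem:con1}, but with the direction reversed: a vertex of the form $x({\bf i})+E_{\overline{J}}+l^*$ with $m_j(l^*)=m_j(l_1^*-E_j)$ is sent to $x({\bf i})+E_{\overline{J}}+l^*+E_j$, while vertices already in $R_{\ii}(l_1^*,l_2^*)$ stay fixed; extending affinely to cubes produces a deformation retract along which $\chi_k$ does not increase.

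For part~(II), I would apply~(I) step by step along the sequence $\{x_n^*\}_{n=0}^t$. At step $n$, take $l_1^*:=x_{n+1}^*$ and $j:=j(n)$: assumption~(b) is precisely the hypothesis of~(I) in this substitution, while the side condition $l_1^*-E_j\geq l_{1,min}^*$ holds because $x_n^*-x_0^*$ is a non-negative sum of base elements $E_{j(m)}$, hence effective. Part~(I) thus supplies a non-$\chi_k$-increasing contraction
$$R_{\ii}(x_n^*,l_2^*)=R_{\ii}(x_{n+1}^*-E_{j(n)},l_2^*)\longrightarrow R_{\ii}(x_{n+1}^*,l_2^*),$$
and composing these retracts for $n=0,\ldots,t-1$ produces the desired contraction of $R_{\ii}(l_{1,min}^*,l_2^*)$ onto $R_{\ii}(l_1^*,l_2^*)$ with $\chi_k$ non-increasing throughout.

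Since both parts parallel the already established ``addition'' versions in Lemmas~\ref{lem:con1}--\ref{lem:con2}, no genuinely new difficulty appears. The only point that deserves attention is the sign bookkeeping in the telescoping identity used for~(I); this reduces to the disjointness of $\calj^*$ and $\ocalj$ and to the support condition $m_j(l^*-l_1^*)=0$, both of which are immediate from the hypotheses of the lemma.
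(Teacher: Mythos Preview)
Your proposal is correct and follows exactly the approach intended by the paper, which simply states ``Symmetrically, by similar proof, one has the following statements too.'' Your explicit unpacking of the sign bookkeeping via $(E_j,E_{\overline{J}})\geq 0$ and $(E_j,l^*-l_1^*)\geq 0$ is the precise mirror of the inequality $(E_j,E_{\overline{I}}-E_{\overline{J}}+l_2^*-l^*)\geq 0$ used in Lemma~\ref{lem:con1}, and your iteration for part~(II) parallels Lemma~\ref{lem:con2} as expected.
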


\subsection{Contractions.}\label{ss:3}
In this subsection we apply the results of the previous subsection \ref{ss:2} in order to contract the triple
$(\phi^{-1}\ii,(\phi|_{S_N})^{-1}\ii,\phi^*_N\ii)$.

First we show the existence of a sequence of cycles
$\{x^*_n\}_{n=0}^t$ with $x^*_0=l_{1,min}^*$ and $x^*_t=0$  which satisfies the assumptions
of Lemma \ref{lem:con3}(II). This follows inductively from the following lemma.
\begin{lemma}\label{lem:con4} For any $x^*$  with $l^*_{1,min}\leq x^*<0$ and supported on $\calj^*$
there exists at least one index
$j\in |x^*|$ such that
\begin{equation}\label{eq:xx}
\chi_k(x({\bf i})+x^*)\geq \chi_k(x({\bf i})+x^*+E_j).\end{equation}
\end{lemma}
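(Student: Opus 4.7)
Using the standard identity $\chi_k(z+E_j)=\chi_k(z)+1-(E_j,z+l'_{[k]})$, the desired inequality (\ref{eq:xx}) is equivalent to the existence of some $j\in|x^*|$ with
\[
(E_j,\,x({\bf i})+x^*+l'_{[k]})\geq 1.
\]
My plan is to argue by contradiction, exploiting the minimality characterization (c) of the universal cycle $x({\bf i})$ in Proposition \ref{lemF1}: if no such $j$ exists, then the cycle $z:=x({\bf i})+x^*$ itself will satisfy conditions (a)--(b) for the same ${\bf i}$, while lying strictly below $x({\bf i})$.

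The steps I would carry out are the following. Assume to the contrary that $(E_j,x({\bf i})+x^*+l'_{[k]})\leq 0$ for every $j\in|x^*|$, and set $z:=x({\bf i})+x^*$, $y^*:=-x^*\geq 0$. Since $x^*$ is supported on $\calj^*$, one has $m_j(z)=m_j(x({\bf i}))=i_j$ for every $j\in\ocalj$, so $z$ satisfies condition (a) of Proposition \ref{lemF1}. Next I would verify (b), i.e.\ $(E_j,z+l'_{[k]})\leq 0$ for \emph{every} $j\in\calj^*$. For $j\in|x^*|$ this is exactly the contradiction hypothesis. For $j\in\calj^*\setminus|x^*|$ I split
\[
(E_j,z+l'_{[k]})=(E_j,x({\bf i})+l'_{[k]})-(E_j,y^*);
\]
the first summand is $\leq 0$ by the defining property (b) of $x({\bf i})$, while $(E_j,y^*)=\sum_{i\in|y^*|}m_i(y^*)(E_j,E_i)\geq 0$, since $j\notin|y^*|$ removes the (only negative) diagonal contribution and all off-diagonal intersection numbers in the plumbing tree are nonnegative. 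Hence (b) holds. Finally, since $x^*<0$ we have $z<x({\bf i})$ strictly, contradicting the minimality (c) of $x({\bf i})$ and proving the lemma. (The hypothesis $l^*_{1,min}\leq x^*$ guarantees in addition that $z\geq 0$, which is consistent but not actually needed for the contradiction.)

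The main obstacle, if one can call it that, is the verification of (b) at the indices $j\in\calj^*\setminus|x^*|$: there one must simultaneously invoke the defining inequality for $x({\bf i})$ and the nonnegativity of off-diagonal intersection pairings in the plumbing tree. Everything else --- the reformulation of (\ref{eq:xx}), condition (a) for $z$, and the closing appeal to minimality --- is entirely formal, and in particular the argument does not require the rationality/bad-vertex hypothesis (which would enter only in the sharper analyses of Proposition \ref{prop:G}).
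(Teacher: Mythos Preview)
Your proof is correct and follows essentially the same route as the paper's: reformulate (\ref{eq:xx}) as $(E_j,x({\bf i})+x^*+l'_{[k]})\geq 1$ for some $j\in|x^*|$, assume the contrary, show that $z=x({\bf i})+x^*$ then satisfies (a)--(b) of Proposition~\ref{lemF1} (splitting the verification of (b) into $j\in|x^*|$ and $j\in\calj^*\setminus|x^*|$, the latter using $(E_j,x({\bf i})+l'_{[k]})\leq 0$ and the nonnegativity of off-diagonal intersections), and conclude by contradicting the minimality (c). The paper's argument is identical, just phrased with $x^*$ instead of $y^*=-x^*$.
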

\begin{proof}
(\ref{eq:xx}) is equivalent to $(E_j,x({\bf i})+l'_{[k]}+x^*)\geq 1$ for some $j\in |x^*|$.
Assume the opposite, that is,  $(E_j,x({\bf i})+l'_{[k]}+x^*)\leq 0$ for every $j\in|x^*|$.
 On the other hand, for  $j\in\calj^*\setminus |x^*|$ one has $(E_j,x^*)\leq 0$ and
 $(E_j,x({\bf i})+l'_{[k]})\leq 0$ by \ref{lemF1}(b). Hence  $(E_j,x({\bf i})+l'_{[k]}+x^*)\leq 0$
 for every $j\in\calj^*$.
 This contradicts the minimality of $x({\bf i})$ in \ref{lemF1}(c).
 \end{proof}

 In particular, Lemma  \ref{lem:con3}(II) applies for $l_1^*=0$ and any $l_2^*\geq 0$ (including $\infty$).

Next, we search for a convenient small cycle $l_2^*$ for which Lemma \ref{lem:con2} applies as well.
First we show that $l^*_2=\infty$ can be replaced by
$x({\bf i}+1_{\overline{I}})-x({\bf i})-E_{\overline{I}}$.
\begin{lemma}\label{lem:i+I}
There exists a sequence as in Lemma \ref{lem:con2} with $x^*_0=
x({\bf i}+1_{\overline{I}})-x({\bf i})-E_{\overline{I}}$.
\end{lemma}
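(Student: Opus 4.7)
I would reformulate the required sequence in the language of cycles $y_n := x({\bf i}) + E_{\overline{I}} + x^*_n$, so that $y_0 = x({\bf i}+1_{\overline{I}})$. The task becomes to produce an infinite sequence $y_0, y_1, y_2, \ldots$ with $y_{n+1} = y_n + E_{j(n)}$, $j(n) \in \calj^*$, satisfying $(E_{j(n)}, y_n + l'_{[k]}) \leq 1$ (equivalently, $\chi_{k_r}(y_{n+1}) \geq \chi_{k_r}(y_n)$), and with each coefficient $m_j(y_n)$, $j \in \calj^*$, tending to infinity.

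\emph{Step 1.} By property \ref{lemF1}(b) applied to $x({\bf i}+1_{\overline{I}})$, one has $(E_j, y_0 + l'_{[k]}) \leq 0$ for every $j \in \calj^*$. This is the only property of $y_0$ used below.

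\emph{Step 2.} I would establish the following analog of Proposition \ref{prop:G}(III) anchored at the new base point $y_0$: for every $l^* > 0$ supported on $\calj^*$, there is a monotone Laufer-type sequence from $y_0$ to $y_0 + l^*$ along which $\chi_{k_r}$ is non-decreasing. The proof is by induction on $\sum_j m_j(l^*)$, and the inductive step requires finding $j \in |l^*|$ with $\chi_{k_r}(y_0 + l^* - E_j) \leq \chi_{k_r}(y_0 + l^*)$. If no such $j$ existed, then $(E_j, y_0 + l^* - E_j + l'_{[k]}) \geq 2$ for every $j \in |l^*|$; combined with Step~1 this rearranges to $(E_j, l^* + k_{can}) \geq 0$ for every $j \in |l^*|$. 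Weighted summation yields $(l^*, l^* + k_{can}) \geq 0$, i.e.\ $\chi_{can}(l^*) \leq 0$, contradicting Artin's criterion \eqref{eq:artin} since $|l^*| \subseteq \calj^*$ spans a rational subgraph.

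\emph{Step 3.} Apply Step~2 with $l^*_M := M \cdot E_{\calj^*}$ to produce valid monotone sequences of length $M |\calj^*|$ in which every $E_j$ with $j \in \calj^*$ is added exactly $M$ times. A K\"onig-type compactness argument on the finitely-branching tree of valid monotone prefixes rooted at $y_0$ produces an infinite monotone sequence $\{y_n\}_{n \geq 0}$; then $x^*_n := y_n - x({\bf i}) - E_{\overline{I}}$ satisfies (a) and (b) of Lemma \ref{lem:con2}. To secure condition (c), a round-robin refinement of the inductive choices in Step~2 organizes the $\sigma_M$'s so that each $j \in \calj^*$ is chosen with positive density, forcing every $m_j(y_n)$ to grow without bound.

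The principal obstacle is Step~3: one cannot naively restart Step~2 at an intermediate endpoint $y_n \neq y_0$, since the base estimate $(E_j, y_n + l'_{[k]}) \leq 0$ may fail there. Hence the infinite sequence cannot be assembled by simple concatenation of finite pieces; rather, the finite sequences provided by Step~2 must be extracted compatibly in a single infinite branch exhibiting balanced growth in every direction of $\calj^*$.
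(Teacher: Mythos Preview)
Your Steps 1 and 2 are correct and coincide with the second half of the paper's argument. The gap is in Step~3. K\"onig's lemma on the tree of valid monotone prefixes does produce an infinite branch satisfying (a) and (b), but there is no reason this branch satisfies (c): nothing prevents the infinite path from increasing only a proper subset of the $\calj^*$--coordinates. Your ``round-robin refinement'' does not repair this, because the inductive choice of $j$ in Step~2 is constrained (it must lie in $|l^*|$ and satisfy the Artin-type inequality), so you cannot simply cycle through indices at will; and the finite sequences $\sigma_M$ for different $M$ need not extend one another, so no diagonal extraction forces balanced growth either. You correctly identified that one cannot restart Step~2 at a generic intermediate $y_n$, but you did not actually resolve the difficulty.

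The paper avoids compactness entirely by an explicit construction. By negative definiteness of the subgraph on $\calj^*$ there is an effective cycle $Z^*$ supported on $\calj^*$ with $(Z^*,E_j)<0$ for every $j\in\calj^*$. Fixing any filling sequence from $0$ to $Z^*$, the linear term $\ell\,(Z^*,E_{j(n)})\to-\infty$ forces the monotonicity inequality for all $\ell\geq\ell_0$; concatenating the shifts $\ell Z^*\to(\ell+1)Z^*$ yields an explicit infinite tail with every coordinate tending to infinity. Your Step~2 then bridges $x^*_0$ to $\ell_0 Z^*$. Note incidentally that $(E_j,\,y_0+\ell Z^*+l'_{[k]})\leq 0$ for every $\ell\geq 0$, so the Step~1 hypothesis \emph{does} persist at the special waypoints $y_0+\ell Z^*$; iterating your Step~2 between consecutive such waypoints would also give a correct proof, and is perhaps what you were reaching for.
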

\begin{proof} First we show the existence of some $l^*_2$, with all its coefficient very large,
which  can be connected by a computation sequence to $\infty$ with properties
(a)-(b)-(c) of \ref{lem:con2}. For this,
consider the full subgraph supported by $\calj^*$. Since it is negative definite, it supports an
effective cycle $Z^*$ such that $(Z^*,E_j)<0$ for any $j\in \calj^*$. Consider any sequence $\{x^*_n\}_{n=0}^t$,
$x^*_{n+1}=x^*_n+E_{j(n)}$,  such that $x^*_0=0$ and $x^*_t=Z^*$. Then, there exists
$\ell_0\geq 1$ sufficiently large such that   for any $\ell\geq \ell_0$ and $n$ one has
$$\chi_k(x({\bf i})+E_{\overline{I}}+\ell Z^*+x^*_{n+1})
\geq \chi_k(x({\bf i})+E_{\overline{I}}+\ell Z^*+x^*_{n}).$$
Hence the sequence $\{\ell Z+x_n\}_{\ell \geq \ell_0,\, 0\leq n\leq t}$ connects $l_2^*=\ell_0Z^*$ with $\infty$ with the required properties.

Next, we connect $x({\bf i}+1_{\overline{I}})-x({\bf i})-E_{\overline{I}}$ with this $l^*_2$
via a sequence which satisfies (a)-(b)-(c) of Lemma \ref{lem:con2}. Its existence follows from the following statement:

For any $l^*>0$ supported by $\calj^*$ there exists at least one index $j\in|l^*|$ such that
$$\chi_k(x({\bf i}+1_{\overline{I}})+l^*-E_j)\leq \chi_k(x({\bf i}+1_{\overline{I}})+l^*).$$
Indeed, assume the opposite. Then $(E_j,l^*)\geq E_j^2+2$ for any $j\in|l^*|$.
Hence $(E_j,l^*+k_{can})\geq 0$, or $\chi_{can}(l^*)\leq 0$, which contradict the rationality of the subgraph
 supported by $\calj^*$.
\end{proof}

Finally,  by Proposition \ref{prop:G}(I) (applied for $\overline{I}=\overline{J}$ and
${\bf s'}={\bf s}({\bf i}, \overline{J})$),
the newly determined `upper' bound $l^*_2=x({\bf i}+1_{\overline{I}})-x({\bf i})-E_{\overline{I}}$
can be pushed down further to its support $\cals\ii$.
Hence \ref{prop:G}(I), \ref{lem:i+I} and \ref{lem:con4} imply the following.

\begin{corollary}\label{cor:con}
There exists a deformation contraction of $\phi^{-1}\ii$ to $R_{\ii}(0, E_{\cals\ii})$ along which $\chi_k$ is
non--increasing.  Moreover, its restriction induces a deformation
 retract from  $(\phi|_{S_N})^{-1}\ii$ to $S_N\cap R_{\ii}(0, E_{\cals\ii}  )$. Restricting further,
 it gives a deformation
 retract from  $\phi^*_N\ii$ to  $\Phi^*_N\ii$, where $\Phi^*_N\ii$ is the product of the cube $\ii$ with
  $$\psi(\phi^*_N\ii)\cap \{l^*\,:\, 0\leq l^*-\psi(x({\bf i}))\leq  E_{\cals\ii}\}.$$
\end{corollary}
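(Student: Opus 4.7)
The plan is to assemble the contraction of $\phi^{-1}\ii = R_{\ii}(l^*_{1,\min},\infty)$ onto $R_{\ii}(0, E_{\cals\ii})$ in three consecutive stages, each supplied by one of the three preceding results of this subsection, and then to observe that preserving monotonicity of $\chi_k$ at every intermediate step is exactly what forces the retract to descend to the nested subspaces $\phi^*_N\ii \subseteq (\phi|_{S_N})^{-1}\ii \subseteq \phi^{-1}\ii$.

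For \emph{Stage~1} (raise the lower bound from $l^*_{1,\min}$ up to $0$), iterating Lemma~\ref{lem:con4} produces a sequence $x^*_0=l^*_{1,\min},\, x^*_1,\ldots,\, x^*_t = 0$ with $x^*_{n+1}=x^*_n+E_{j(n)}$ for some $j(n)\in |x^*_n|\subseteq\calj^*$ and $\chi_k(x({\bf i})+x^*_n)\ge\chi_k(x({\bf i})+x^*_{n+1})$; feeding it to Lemma~\ref{lem:con3}(II) gives a $\chi_k$-non-increasing retract of $\phi^{-1}\ii$ onto $R_{\ii}(0,\infty)$. For \emph{Stage~2} (lower the upper bound from $\infty$ down to $l^*_2 := x({\bf i}+1_{\overline{I}})-x({\bf i})-E_{\overline{I}}$), Lemma~\ref{lem:i+I} delivers a sequence at $x^*_0 = l^*_2$ verifying the hypotheses (a)-(b)-(c) of Lemma~\ref{lem:con2}, which then produces the retract of $R_{\ii}(0,\infty)$ onto $R_{\ii}(0,l^*_2)$. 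For \emph{Stage~3} (refine the upper bound from $l^*_2$ down to $E_{\cals\ii}$), I would invoke Proposition~\ref{prop:G}(I) with $\overline{J}=\overline{I}$ and $\cals'=\cals\ii$; its conclusions produce a Laufer sequence joining $x({\bf i})+E_{\overline{I}}+E_{\cals\ii}$ to $x({\bf i}+1_{\overline{I}})$ along which $\chi_k$ is constant, and traversing it in reverse and applying Lemma~\ref{lem:con1} stepwise gives the last retract, from $R_{\ii}(0,l^*_2)$ onto $R_{\ii}(0,E_{\cals\ii})$.

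Composing the three stages yields the desired contraction of $\phi^{-1}\ii$ onto $R_{\ii}(0,E_{\cals\ii})$. The restrictions to $(\phi|_{S_N})^{-1}\ii$ and $\phi^*_N\ii$ then come for free: since $w((l,I))$ equals the maximum of $\chi_k$ over the vertices of $(l,I)$, a cube of weight $\le N$ cannot leave $S_N$ under a $\chi_k$-non-increasing motion, so we obtain the retract of $(\phi|_{S_N})^{-1}\ii$ onto $S_N\cap R_{\ii}(0,E_{\cals\ii})$; and since each stage moves only in the $\calj^*$-coordinate directions, the $\ocalj$-image of every cube is preserved throughout the deformation. This last observation identifies the image of $\phi^*_N\ii$ with the product of $\ii$ and $\psi(\phi^*_N\ii)\cap\{l^*:\, 0\le l^*-\psi(x({\bf i}))\le E_{\cals\ii}\}$, i.e.\ with $\Phi^*_N\ii$.

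The principal obstacle is Stage~3: the support bound $E_{\cals\ii}$ is strictly smaller than the `natural' upper bound $l^*_2$ whenever the latter has any multiplicity greater than $1$, and crossing this gap without increasing $\chi_k$ is exactly what the delicate combinatorial rationality argument in the proof of Proposition~\ref{prop:G}(I) was designed to deliver (and which in turn relied on the blow-up reduction of \ref{ss:assumption}). Without it, only Stages~1 and~2 would be available and the final retract would terminate at the larger rectangle $R_{\ii}(0,l^*_2)$, which is insufficient for the eventual proof of (\ref{eq:leray3}).
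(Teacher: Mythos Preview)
Your three-stage assembly is exactly the paper's argument: the text immediately preceding the corollary spells out precisely these stages (Lemma~\ref{lem:con4} feeding Lemma~\ref{lem:con3}(II), then Lemma~\ref{lem:i+I} feeding Lemma~\ref{lem:con2}, then Proposition~\ref{prop:G}(I) with $\overline{J}=\overline{I}$, $\cals'=\cals\ii$ reversed through Lemma~\ref{lem:con1}), and the corollary is stated as their immediate consequence. One small inaccuracy in your closing commentary: Proposition~\ref{prop:G} lives in Section~3 and does \emph{not} depend on the blow-up separation assumption of~\ref{ss:assumption}; that assumption is only invoked later, in Lemma~\ref{lem:tildeS}(c).
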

Note that this last space  $\Phi^*_N\ii$ is now rather `small': it is contained in the cube
$(x({\bf i}),\overline{I}\cup \cals\ii)$.
Nevertheless, the $N$--filtration of this cube can be rather complicated!

The statement of the above corollary
means that if  $\Phi^*_N\ii$ is empty if and only if  $\phi^*_N\ii$ is empty,
and when  they are not empty then they
have the same homotopy type. Therefore, via (\ref{eq:leray3}), we need to show that
$$\Phi^*_N\ii \ \ \mbox{ {\it is non--empty and contractible}.}$$
\subsection{The non--emptiness of  $\Phi^*_N\ii$.}\label{ss:nonempty}
Recall that we fixed an integer $N$ and a   cube
$({\bf i},\overline{I})$ which belongs to $\overline{S}_N$. By Definition \ref{bek:331} and
Proposition~\ref{prop:G}(I)(b) this reads as
\begin{equation}\label{eq:NN}
 \chi_k(x({\bf i}+1_{\overline{J}}))=
  \chi_k(x({\bf i})+E_{\overline{J}\cup \cals({\bf i},\overline{J})})
 \leq N \ \
 \mbox{for every} \ \  \overline{J}\subseteq\overline{I}.
\end{equation}
The non-emptiness follows from the following statement.
 \begin{proposition}\label{th:nonempty} For any fixed cube $({\bf i},\overline{I})\in \overline{S}_N$
 there exists  a cycle in $L$ of the form $x({\bf i})+E_{\widetilde{\cals}\ii}$ such that
 $\widetilde{\cals}\ii\subseteq \cals\ii$ and
 $(x({\bf i})+E_{\widetilde{\cals}\ii},\overline{I}) \subseteq \Phi_N^{*}\ii$; that is
 \begin{equation}\label{eq:NN2}
 \chi_k(x({\bf i})+E_{\overline{J}\cup \widetilde{\cals}\ii})
 \leq N \ \
 \mbox{for every} \ \  \overline{J}\subseteq\overline{I}.
\end{equation}
 \end{proposition}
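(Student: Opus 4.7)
The plan is to prove the proposition by induction on $|\overline{I}|$. The base case $|\overline{I}|=0$ is immediate: take $\widetilde{\cals}\ii=\emptyset$, for which the sole condition $\chi_k(x({\bf i}))\leq N$ is precisely the statement $({\bf i},\emptyset)\in\overline{S}_N$.

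For the inductive step, fix $j^*\in\overline{I}$ and set $\overline{I}':=\overline{I}\setminus\{j^*\}$. Both sub-cubes $({\bf i},\overline{I}')$ and $({\bf i}+1_{j^*},\overline{I}')$ lie in $\overline{S}_N$; the second follows because $\{j^*\}\cup\overline{J}\subseteq\overline{I}$ for every $\overline{J}\subseteq\overline{I}'$, so $\chi_k(x({\bf i}+1_{\{j^*\}\cup\overline{J}}))\leq N$. The inductive hypothesis then supplies sets $\widetilde{\cals}_1\subseteq\cals({\bf i},\overline{I}')$ and $\widetilde{\cals}_2\subseteq\cals({\bf i}+1_{j^*},\overline{I}')$ certifying the conclusion on the two opposite faces. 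Using \ref{lemF1}(iii) one verifies the nesting $\cals({\bf i},\overline{J})\subseteq\cals\ii$ for $\overline{J}\subseteq\overline{I}$, and an analogous calculation gives $\cals({\bf i}+1_{j^*},\overline{J})\subseteq\cals\ii$; hence the natural candidate
$$\widetilde{\cals}\ii\;:=\;\widetilde{\cals}_1\cup\cals({\bf i},\{j^*\})\cup\widetilde{\cals}_2$$
sits inside $\cals\ii$. The weight inequality at the extreme corners $\overline{J}=\emptyset$ and $\overline{J}=\overline{I}$ can then be handled by direct comparison, using the identity $\chi_k(z+E_\Delta)=\chi_k(z)+\chi_{can}(E_\Delta)-(E_\Delta,z+l'_{[k]})$ together with $(E_j,x({\bf i}+1_{\overline{J}})+l'_{[k]})\leq 0$ for $j\in\calj^*$.

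The main obstacle lies in the intermediate corners: since $\chi_{can}(E_\Delta)\geq 0$ for effective $\calj^*$-cycles (Laufer's criterion \ref{LC} applied to the rational $\calj^*$-subgraph), the naive candidate may overshoot $N$ at some corner. To repair such a failure I would invoke Proposition \ref{prop:G}(I): starting from the offending cycle at corner $\overline{J}$ and running the non-increasing Laufer sequence toward $x({\bf i}+1_{\overline{J}})$, one identifies a local exchange of elements within $\cals\ii$ that corrects the failing corner while preserving the already-controlled ones, while Proposition \ref{prop:G}(II) rules out needing elements outside $\cals\ii$. The delicate part, and what I expect to be the technical heart of the proof, is arranging this exchange so that a single $\widetilde{\cals}\ii$ works at all corners simultaneously; here the rationality of the $\calj^*$-subgraph --- a direct consequence of the bad-vertex hypothesis on $\overline{\calj}$ --- is essential, as it bounds $\chi_{can}$ from below and forces the repair procedure to terminate inside $\cals\ii$.
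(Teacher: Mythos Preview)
Your proposal is not a proof: you explicitly defer the central difficulty when you write that ``the delicate part, and what I expect to be the technical heart of the proof, is arranging this exchange so that a single $\widetilde{\cals}\ii$ works at all corners simultaneously.'' That \emph{is} the proposition. Everything before that point is setup, and the ``repair via local exchange'' is a hope rather than an argument: Proposition~\ref{prop:G}(I) produces Laufer sequences with non-increasing $\chi_k$ from a fixed starting point, but it gives no mechanism for modifying $\widetilde{\cals}$ so as to lower $\chi_k$ at one corner $\overline{J}$ while preserving the bound at the remaining $2^{|\overline{I}|}-1$ corners. Adding an element of $\cals\ii$ typically helps some corners and hurts others, and nothing you have written controls this interaction.

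There is also a concrete mismatch in your induction. The hypothesis at the face $({\bf i}+1_{j^*},\overline{I}')$ yields $\widetilde{\cals}_2$ with $\chi_k\big(x({\bf i}+1_{j^*})+E_{\overline{J}'\cup\widetilde{\cals}_2}\big)\leq N$, i.e.\ an inequality based at $x({\bf i}+1_{j^*})$; but the target statement requires $\chi_k\big(x({\bf i})+E_{\{j^*\}\cup\overline{J}'\cup\widetilde{\cals}}\big)\leq N$, based at $x({\bf i})$. The gap between these base points is precisely the (possibly large) support $\cals({\bf i},\{j^*\})$, and your candidate $\widetilde{\cals}_1\cup\cals({\bf i},\{j^*\})\cup\widetilde{\cals}_2$ does not visibly reconcile the two.

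The paper's route is structurally different. It does not induct on $|\overline{I}|$ by splitting into opposite faces. Instead it (i) reformulates the problem purely in terms of the integers $\sigma_j=1-(E_j,x({\bf i})+l'_{[k]})$, so that $x({\bf i})$ disappears from the statement; (ii) performs three reductions --- to $\overline{I}=\ocalj$, to $\sigma_j>0$ for all $j$, and to $G$ equal to the minimal subtree $G^-$ spanned by $\ocalj$; and (iii) then inducts on the \emph{graph}, removing an end-vertex $j_0\in\ocalj$ of $G^-$ and the adjacent component $G_0^*$ of $G\setminus\ocalj$. A case analysis (whether the maximum $\Delta N(G)$ is realized by some $\overline{J}$ avoiding $j_0$, whether the unique interior bad vertex $\overline{j}$ is adjacent to the support $\cals_0^*$, etc.) then produces $\widetilde{\cals}$ explicitly in each case from the inductive solution on the smaller graph $G'$ or $G'_{-1}$, possibly together with $\cals_0^*$ or a connecting string. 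The key combinatorial identity is (P2), which tracks how $\Delta(\overline{J};G)$ changes when a single vertex is added to $\overline{J}$; this is what makes the case splitting tractable and allows the pieces to be glued without the corner-by-corner repair you envisage.
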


\begin{proof}
The proof is long, it fills all this subsection \ref{ss:nonempty}.
 It is an induction over the cardinality of $\calj$, respectively of $\overline{I}$.
At start we reformulate it by keeping only the necessary combinatorial data, and we
also perform three  reductions to simplify the involved combinatorial complexity.
We will also write $\widetilde{\cals}:=\widetilde{\cals}\ii$ for the wished cycle.

\bekezdes\label{bek:egy}  {\bf Starting the reformulation.} Define (cf. Proposition \ref{prop:G}(I))
\begin{equation}\label{eq:NN3}
N(G):= \max_{\overline{J}\subseteq \overline{I}}\chi_k(x({\bf i}+1_{\overline{J}}))=
 \max_{\overline{J}\subseteq \overline{I}} \chi_k(x({\bf i})+E_{\overline{J}\cup \cals({\bf i},\overline{J})}).
\end{equation}
$N(G)$ is the smallest integer $N$ for which (\ref{eq:NN}) is valid; hence it is enough to prove
Theorem \ref{th:nonempty}  only for $N=N(G)$. Note that $N(G)$ depends on $\ii$, though in its notation this is
not emphasized.

In fact, even the weight $\chi_{k}(x({\bf i}))$ ---
and partly the cycle $x({\bf i})$, cf. \ref{bek:sigma}, ---
are irrelevant in the sense that it is enough to treat a relative version of the statement.
Indeed, we can consider only
the value $\Delta N(G):=N(G)-\chi_{k}(x({\bf i}))$, which equals (use the last term of (\ref{eq:NN3})):
\begin{equation}\label{eq:red1}
\Delta N(G)=\max_{\overline{J}\subseteq \overline{I}}\, \Big(
 \chi_{can}(E_{\overline{J}\cup \cals({\bf i},\overline{J})})-
(E_{\overline{J}\cup \cals({\bf i},\overline{J})}\,,\, x({\bf i})+l'_{[k]})\Big).\end{equation}
 Then, cf. (\ref{eq:NN2}),  we have to find
$\widetilde{\cals}\subseteq \cals\ii$, such that  for any $\overline{J}\subseteq \overline{I}$ one has
  \begin{equation}\label{eq:NN4}
  \chi_{can}(E_{\overline{J}\cup \widetilde{\cals}})-
(E_{\overline{J}\cup \widetilde{\cals}}\,,\, x({\bf i})+l'_{[k]})\leq \Delta N(G).
\end{equation}
 Note also that for a reduced cycle $Z$ of $G$ (as $E_{\overline{J}\cup \cals({\bf i},\overline{J})}$
 or $E_{\overline{J}\cup \widetilde{\cals}}$),  $\chi_{can}(Z)$ is the number of components of $Z$,
 which sometimes will also be denoted by $\#(Z)$.

\vspace{2mm}

It is convenient to set the following notation.
For any vertex $j$ and $J\subseteq \calj$ set
$$\sigma_j:=1-(E_j,x({\bf i})+l'_{[k]}) \ \ \mbox{and} \ \
\sigma_j(J):=\sigma_j -(E_j,E_J).$$
By definition of $x({\bf i})$,  one has  $\sigma_j>0$ for any $j\in\scalj$.
Note also  that the information needed in (\ref{eq:red1}) and (\ref{eq:NN4}) about $x({\bf i})+l'_{[k]}$ can
be totally codified  by the integers $\sigma_j$. This permits to reformulate the statement of the Paragraph
\ref{bek:egy} into the following version:

\bekezdes\label{bek:sigma}{\bf Final Reformulation.}
Let $G$ be a connected graph
(e.g. a plumbing graph whose Euler decorations are deleted),
with $\calj=\ocalj\sqcup\scalj$, such that any two vertices of $\ocalj$ are not adjacent, and
with additional decorations
$\{\sigma_j\}_{j\in\calj}$ where $\sigma_j>0$ for $j\in\scalj$.
Fix $\overline{I}\subseteq \ocalj$.  For each $\overline{J}\subseteq \overline{I}$ we
define $\cals(\overline{J})$ as the minimal support in $\scalj$ such that for any $j\in \scalj\setminus \cals(\overline{J})$ one has $\sigma_j(\overline{J}
\cup\cals(\overline{J}))>0$. [Clearly, $\cals(\overline{J})$ corresponds to $\cals({\bf i}, \overline{J})$
in the original version, see also  \ref{prop:G}.]

The `modified' Laufer algorithm to find $\cals(\overline{J})$ (transcribed in the language
of $\sigma_j$'s) is the following. We construct the sequence of supports
$\{s_n\}_{n=0}^t$ by the next principle: $s_0=\emptyset$, and  if $s_n$ is already constructed and there exists some
$j(n)\in \scalj\setminus s_n$ such that
\begin{equation}\label{eq:degger}
\sigma_{j(n)}(\overline{J}\cup s_n)
=\sigma_{j(n)}-(E_{j(n)},E_{\overline{J}\cup s_n})
\leq 0\end{equation}
then take $s_{n+1}:=s_n\cup j(n)$; otherwise stop,
and set $t=n$. [This again follows from the fact that $(E_j, x({\bf i})+E_{\overline{J}\cup s_n}+l'_{[k]})>0$
if and only if  $\sigma_j(\overline{J}\cup s_n)\leq 0$.]
Note that $\cals(\emptyset)=\emptyset$.

Then  the statements  form  \ref{bek:egy} (hence what we need to show) read as follows.


For any $\overline{J}\subseteq \overline{I}$ set
\begin{equation}\label{eq:red2}
\Delta (\overline{J};G) :=
 \#(E_{\overline{J}\cup \cals(\overline{J})})+ \sum_{j\in
\overline{J}\cup \cals(\overline{J})}( \sigma_j-1), \ \  \mbox{and} \ \ \
\Delta N(G)=\max_{\overline{J}\subseteq \overline{I}}\,
\Delta (\overline{J};G).\end{equation}
Then there exists  $\widetilde{\cals}\subseteq \cals(\overline{I})$
 which, for any $\overline{J}\subseteq \overline{I}$, satisfies
\begin{equation}\label{eq:red3}
 \#(E_{\overline{J}\cup \widetilde{\cals}})+ \sum_{j\in
\overline{J}\cup \widetilde{\cals}}( \sigma_j-1)\leq \Delta N(G).\end{equation}

\vspace{2mm}

Before we formulate the reductions, we list some additional {\it properties of this setup}.

\bekezdes\label{bek:s_n}
\ {\bf (P1)} \
We analyze how the numerical invariants are modified along the computation sequence
$\{s_n\}_{n=0}^t$ of \ref{bek:sigma}.
Note that if (\ref{eq:degger}) occurs, since $\sigma_{j(n)}>0$, $j(n)$ should be adjacent to $\overline{J}\cup s_n$.
If it is adjacent to only one vertex of $\overline{J}\cup s_n$, then necessarily $\sigma_{j(n)}=1$.
Furthermore,  in any situation,  $\#(E_{\overline{J}\cup s_n})$ is decreasing by
$(E_{j(n)}, E_{\overline{J}\cup s_n})-1$. Therefore, the sequence
$a_n(\overline{J}):=\#(E_{\overline{J}\cup s_n})+
\sum_{j\in \overline{J}\cup s_n}(\sigma_j-1)$ is modified during this step by
$$a_{n+1}(\overline{J})-a_{n}(\overline{J})=
\sigma_{j(n)}-(E_{j(n)}, E_{\overline{J}\cup s_n})\leq 0.$$

\vspace{2mm}

\noindent {\bf (P2)} \
For any $\overline{J}\subseteq \overline{I}$ and vertex  $j\in \overline{I}\setminus \overline{J}$ one has
\begin{equation*}\label{eq:j_0}
\Delta(\overline{J}\cup j;G)=\Delta(\overline{J};G)+
\sigma_{j}- (E_j,E_{\cals(\overline{J})}).
\end{equation*}
The proof runs as follows. Let $\{s_n\}_{n=0}^t$ be the computation sequence for
$\cals(\overline{J})$. It can be considered as the first part of a sequence for $\cals(\overline{J}\cup j)$
too; let $\{s_n\}_{n=t+1}^{t'}$ be its continuation for $\cals(\overline{J}\cup j)$.
The coefficients $a_n(\overline{J})$ and $a_n(\overline{J}\cup j)$ for $n\leq t$ can be compared. Indeed,
$a_0(\overline{J}\cup j)=a_0(\overline{J})+\sigma_j$, and, similarly as in (P1),
$a_{t}(\overline{J}\cup j)=a_t(\overline{J})+\sigma_j-(E_j,E_{\cals(\overline{J})})$,
which is
the right hand side of the above identity (since $a_t(\overline{J})=\Delta (\overline{J};G)$).

   Next, we show that $a_n(\overline{J}\cup j)$ is constant for any further value $n\geq t$.
First take  $n=t$. Then $\sigma_{j(t)}-(E_{j(t)}, E_{\overline{J}\cup \cals(\overline{J})})>0$
(since $\cals(\overline{J})$ is completed), but
$\sigma_{j(t)}-(E_{j(t)}, E_{\overline{J}\cup \cals(\overline{J})\cup j})\leq 0$
(since $\cals(\overline{J}\cup j)$ is not completed). Hence $(E_j,E_{j(t)})=1$ and (using (P1) too)
$a_{t+1}(\overline{J}\cup j)-a_t(\overline{J}\cup j)=
\sigma_{j(t)}-(E_{j(t)}, E_{\overline{J}\cup \cals(\overline{J})\cup j})= 0$.

In general, set  $s^j_n:=s_n\setminus \cals(\overline{J})$, e.g. $s_t^j=\emptyset$.
 At every step,
by induction, $E_{j\cup s^j_n}$is connected, hence $(E_{j(n)},  E_{j\cup s^j_n})$ can
be at most one (since the graph contains no loops). Hence,
$\sigma_{j(n)}-(E_{j(n)}, E_{\overline{J}\cup \cals(\overline{J})})>0$,
and $\sigma_{j(n)}-(E_{j(n)}, E_{\overline{J}\cup \cals(\overline{J})\cup s^j_n\cup j})\leq 0$ imply
 $(E_{j(n)},  E_{j\cup s^j_n})=1$ and $a_{n+1}(\overline{J}\cup j)=a_n(\overline{J}\cup j)$.

\vspace{2mm}

\noindent {\bf (P3)}
Fix a vertex $\overline{j}\in\overline{I}$ with $\sigma_{\overline{j}}\geq 1$,  and
assume that  {\it for all  realizations} of  $\Delta N(G) $  as $\Delta(\overline{J},G)$
(as in (\ref{eq:red2})) one has  $\overline{J}\ni \overline{j}$.
Let $G_{-1}$ be the graph obtained from $G$ by replacing the decoration $\sigma_{\overline{j}}$ by
$\sigma_{\overline{j}}-1$. We claim that
\begin{equation}\label{eq:-1}
\Delta N(G_{-1})=\Delta N(G)-1.
\end{equation}
Indeed,  since $\{\sigma_j\}_{j\in\calj^*}$ is unmodified,
the support $\cals(\overline{J})$
for any $\overline{J}$  is the same determined in $G_{-1}$ or in $G$.
If $\overline{J}\not \ni \overline{j}$ then $\Delta(\overline{J},G_{-1})=\Delta (\overline{J},G)$ by (\ref{eq:red2}),
hence $\Delta (\overline{J},G_{-1}) <\Delta N(G)$.
If $\overline{J}\ni \overline{j}$ then
$\Delta (\overline{J},G_{-1})=\Delta (\overline{J},G)-1$ by the same
(\ref{eq:red2}). Since one such $\overline{J}$ realizes $\Delta N(G)$, the claim follows.

 \bekezdes\label{bek:ibar} {\bf First Reduction: $\overline{I}=\ocalj$.} \
Consider $\ocalj\setminus \overline{I}=\overline{I}^c$ and the graph $G\setminus \overline{I}^c$ obtained from the
original graph $G$ by deleting the vertices $\overline{I}^c$ and their adjacent edges.
The connected components of  $G\setminus \overline{I}^c$
do not interact  from the point of view of the statement of Proposition~
\ref{th:nonempty}. Indeed, the Laufer algorithm does not
propagate along the bad vertices  $\overline{I}^c$, and it is also enough to find supports  $\widetilde{\cals}$ for each component independently. Hence, {\it we may assume that $\overline{I}=\ocalj$}.

\bekezdes
{\bf Second Reduction: $\sigma_j>0$ for any $j$.} \
Consider the situation from \ref{bek:sigma} with $\overline{I}=\ocalj$, cf. \ref{bek:ibar}.
Assume that $\sigma_j\leq 0$ for some $j\in \overline{I}=\ocalj$, and consider the graph $G\setminus j$ obtained from
$G$ by deleting the vertex $j$ and its adjacent edges. Note the following facts:

$\bullet$ \ The maximum $\Delta N(G)$ in (\ref{eq:red2})
can be realized by a subset $\overline{J}$ which does not contain $j$.
In fact, for any $\overline{J}$ with $j\not\in\overline{J}$ one has
$\Delta(\overline{J}\cup j;G)\leq \Delta(\overline{J};G)$. Indeed, using the notations from \ref{bek:s_n},
$a_0(\overline{J}\cup j)\leq a_0(\overline{J})$; the sequence $s_n$ associated with
 $\overline {J}$ is good as the beginning of the sequence of $\overline{J}\cup j$, and during this inductive steps
 $a_n(\overline{J}\cup j)$ drops more than $a_n(\overline{J})$; and finally, if the sequence of
 $\overline{J}\cup j$ is longer, then its $a_{n}$--values decrease
 even more (cf. \ref{bek:s_n}).

$\bullet$ \  All the supports of type
 $\cals(\overline{J})$ definitely are included  in $G\setminus j$ (since are subsets of
${\mathcal J}^*$).

$\bullet$ \ If we find for each component of $G\setminus j$ some $\widetilde{\cals}$ satisfying the statements of the theorem for that component, then their union solves the problem for $G$ as well.

Therefore, having $G$ with some $\sigma_j\leq 0$, we can delete $j$ and continue to search for $\widetilde{\cals}$
for $G\setminus j$: that support will work for $G$ as well.

If we delete all vertices with
$\sigma_j\leq 0$ ($j\in \overline{I}$) then
 we arrive to a situation when $\sigma_j>0$ for any $j\in\overline{I}$, hence,
a posteriori, $\sigma_j>0$ for any $j\in\calj$.

\vspace{2mm}

{\it Note that the wished reformulated statement from \ref{bek:sigma}, even for all $\sigma_j=1$, when the problem depends purely on the shape of the graph, is far to be trivial.}

\bekezdes\label{bek:G^-} {\bf Third Reduction: $G=G^-$}. \ Assume $\overline{I}=\ocalj$,
 cf. \ref{bek:ibar}.
 Let $G^-$ be the minimal connected subgraph of $G$ generated  by the vertices $\overline{I}$.
Here the vertices $\calj(G^-)$ have an induced  disjoint decomposition into $\ocalj(G^-)=\ocalj$ and
$\scalj(G^-)=\calj(G^-)\cap \scalj$.
Moreover, each connected component of $G\setminus G^-$ is glued to $G^-$
via a unique $j\in \overline{I}$.

We claim that a solution $\widetilde{\cals}$ for $G^-$ provides a solution for $G$ too.
Indeed, for any $\overline{J}\subseteq \overline{I}$, the supports  $\cals_G(\overline{J})$ and
$\cals_{G^-}(\overline{J})$ generated in $G$, respectively in $G^-$ satisfy the following.

 $\overline{J}\cup\cals_G(\overline{J})$
can be obtained from $\overline{J}\cup \cals_{G^-}(\overline{J})$ by gluing some subtrees of
$G\setminus G^-$ along some elements of $\overline{J}$. These subtrees are maximal among
those connected subgraphs of $G$ (supported in $\calj^*\setminus \calj(G^-)$)
with all $\sigma_j=1$ and adjacent to $G^-$. In particular,
 $\overline{J}\cup \cals_{G^-}(\overline{J})\subseteq \overline{J}\cup\cals_G(\overline{J})$,
and their topological realizations are homotopy equivalent;
$\sigma_j=1$ for any $j\in  \cals_{G}(\overline{J})\setminus  \cals_{G^-}(\overline{J})$;
and the integers $\#E_{\overline{J}\cup \cals(\overline{J})}$
computed for $G$ and $G^-$ are the same.

Therefore,  $\Delta N(G)=\Delta N(G^-)$, and
a solution $\widetilde{\cals}$ for $G^-$ is a solution for $G$ too.

Hence, {\it we can assume that $G=G^-$}.\\

This ends the possible reductions/preparations  and we start the inductive argument.

\bekezdes {\bf The induction.}
The proof is based on inductive argument over $\sigma_j$-- decorated  graphs
(with  $\overline{I}=\ocalj$, $\sigma_j>0$ and $G=G^-$), where we will consider subgraphs (with
induced decorations $\sigma_j$), and eventually we will decrease the decorations $\{\sigma_j\}_{j\in\ocalj}$.

If $\overline{I}$ is empty then $\Delta N(G)=0$;  if
$\overline{I}$  contains exactly one element $j_0$,  then by (\ref{bek:G^-}) $G=\{j_0\}$ and
by (\ref{eq:red2}) $\Delta N(G)=\sigma_{j_0}$. In both cases
$\widetilde{\cals}=\emptyset$ answers the problem.
\bekezdes
The inductive step is based on the following picture.
Recall that $G$ agrees with the smallest connected subgraph generated by
$\ocalj$. Let $j_0\in\ocalj$ be one of its end--vertices (that is, a vertex which has only one adjacent vertex
in $G$). Denote that connected component of $G\setminus \ocalj$ which is adjacent to $j_0$ by $G^*_0$.

If $G\setminus \ocalj=G^*_0$ then all the vertices from $\ocalj$ are adjacent to $G^*_0$ and
$\ocalj=\overline{I}$ is {\it exactly}  the set of end--vertices of $G$. Then one verifies
 (use \ref{bek:s_n}(P2)) that

$\bullet$ \   $\Delta(\overline{J};G) $ is increasing function in $\overline{J}$, hence
  $\Delta N(G)=\Delta(\overline{I},G)$,  and

$\bullet$ \   $\#(E_{\overline{J}\cup \cals(\overline{I})})=
\#(E_{\overline{I}\cup \cals(\overline{I})})$, hence (\ref{eq:red3}) holds for $\widetilde{\cals}=
\cals(\overline{I})$.

\vspace{2mm}

Next, assume that $G\setminus \ocalj\not=G^*_0$. We may also assume (by a good choice of
$j_0$) that there is {\it only one} vertex $\overline{j}$ of $\ocalj$ which is simultaneously adjacent to
$G^*_0$ and to some other component of  $G\setminus \ocalj$. Let $\{j_0,j_1,\ldots, j_k,\overline{j}\}$
be the elements of $\ocalj$ which are adjacent to $G^*_0$. Then $j_0,j_1,\ldots, j_k$ are end--vertices of
$G$.
%
Let $G'$ be  obtained from $G$ be deleting $G^*_0$, $\{j_0,j_1,\ldots, j_k\}$
and all their adjacent edges.

Here is the schematic picture of $G$, where the vertices from $\scalj$ are not emphasized:

\begin{picture}(390,100)(10,-10)
\put(30,10){\framebox(65,30)}
\put(80,35){\line(2,-1){20}}
\put(80,15){\line(2,1){20}}
\put(83,28){\makebox(0,0){$\vdots$}}\put(190,45){\makebox(0,0){$\ldots$}}
\put(100,25){\circle*{4}}
\put(100,25){\line(1,0){200}}
 \put(300,25){\circle*{4}}
\put(150,25){\line(0,1){20}}
\put(150,45){\line(1,1){20}}
\put(150,45){\line(-1,1){20}}
\put(170,65){\circle*{4}} \put(130,65){\circle*{4}}
\put(250,25){\line(0,1){40}} \put(250,25){\line(-1,1){40}}
 \put(250,45){\line(-1,1){20}}
 \put(250,65){\circle*{4}} \put(230,65){\circle*{4}} \put(210,65){\circle*{4}}
\put(100,33){\makebox(0,0){\tiny{$\overline{j}$}}}
\put(300,33){\makebox(0,0){\tiny{$j_0$}}}
\put(130,72){\makebox(0,0){\tiny{$j_k$}}}
\put(250,72){\makebox(0,0){\tiny{$j_1$}}}
\put(230,72){\makebox(0,0){\tiny{$j_2$}}}
\put(190,72){\makebox(0,0){$\ldots$}}
\put(285,50){\makebox(0,0){\small{$G^*_0$}}}
\put(35,50){\makebox(0,0){\small{${G}'$}}}
\put(340,25){\circle*{4}}
\put(385,27){\makebox(0,0){\small{\mbox{= elements of $\ocalj$}}}}
\dashline[3]{3}(108,5)(295,5)\dashline[3]{3}(108,5)(108,60)
\dashline[3]{3}(108,60)(295,60)\dashline[3]{3}(295,5)(295,60)
\dashline[3]{3}(25,5)(104,5)\dashline[3]{3}(104,5)(104,60)
\dashline[3]{3}(25,60)(104,60)\dashline[3]{3}(25,5)(25,60)
\end{picture}

\vspace{5mm}

%

The inductive step splits in several cases ({\bf A} and {\bf B}, {\bf A} splits
into {\bf I} and {\bf II},
while {\bf I} has two subcases {\bf I.a} and {\bf I.b}).

\bekezdes\label{bek:j_02} {\bf A.} {\it \underline{ Assume that $\Delta N(G)$ in (\ref{eq:red2}) can be
realized by some $\overline{J}$
with  $j_0\not\in \overline{J}$.} }

\vspace{1mm}

Fix such a $\overline{J}$.
Since $\sigma_{j_0}\geq 1$ and 
$\Delta(\overline{J},G)\geq\Delta(\overline{J}\cup j_0,G)$, from \ref{bek:s_n}(P2) one gets
\begin{equation}\label{eq:delta2}
\mbox{$\sigma_{j_0}=1$ and  $j_0$ is adjacent to a vertex  of $\cals(\overline{J})$}.
\end{equation}
Assume that some $j_\ell$ ($1\leq \ell\leq k$) is not in $\overline{J}$. Then again by
$\Delta(\overline{J}\cup j_\ell,G)\leq \Delta(\overline{J},G)$ and \ref{bek:s_n}(P2) we get
that $\sigma_{j_\ell}=1$ and $j_\ell$ is adjacent to $\cals(\overline{J})$.
In particular, $\Delta(\overline{J}\cup j_\ell,G)=\Delta(\overline{J},G)$, and we can replace
$\overline{J}$ by $\overline{J}\cup j_\ell$. Hence, for uniform treatment, in such a situation
 we can always assume that
\begin{equation}\label{eq:jell}
\{j_1,\ldots, j_k\}\subseteq \overline{J}.
\end{equation}
Let $\cals^*_0$ be the support generated by $\{j_1,\ldots,j_k\}$ via the (reformulated)
Laufer's algorithm   \ref{bek:sigma};
then  $\cals^*_0\subseteq G^*_0$.

We will need another fact  too. Let $\overline{J}'$ be a subset of $\ocalj(G')$. Then
\begin{equation}\label{eq:''}
\Delta(\overline{J}',G')=\Delta(\overline{J}',G),
\end{equation}
that is, the $\Delta$--invariants of $\overline{J}'$
 in $G'$ and in $G$ are the same. Indeed, if $\overline{j}\not\in \overline{J}'$,
then the identity  is clear since $\overline{J}'$  generates the same supports
 $\cals(\overline{J}',G')=\cals(\overline{J}',G)$ in $G'$ and $G$. Otherwise,
  $\cals(\overline{J}',G)$ is the union of $\cals(\overline{J}',G')$
with the  maximal element of those
connected subgraph of $G^*_0$ which are  adjacent to $\overline{j}$ and  $\sigma_j=1$ for all
 their vertices $j$.

Now, our discussion bifurcates into two cases: {\it
whether $\overline{j}$ \, is adjacent to $\cals^*_0$ or not.}

\vspace{2mm}

\noindent
{\it {\bf I.} \underline{The case when  $\overline{j}$ is not adjacent to $\cals^*_0$.} }

\vspace{2mm}

We start with the following general statement, valid for any  $\overline{J}\subseteq \overline{I}$,
which does not contain $j_0$ but it contains $\{j_1,\ldots, j_k\}$. For such $\overline{J}$,
whenever $\overline{j}$ is not adjacent to $\cals^*_0$  one has:
\begin{equation}\label{eq:Delta}
\Delta(\overline{J},G)=\Delta(\{j_1,\ldots, j_k\},G)+\Delta(\overline{J}\cap G',G),
\end{equation}
where $\overline{J}\cap G'$ stands for $\overline{J}\cap \calj(G')$.
For its proof run first the Laufer algorithm for the vertices $ \{j_1,\ldots, j_k\}$ getting
$\cals^*_0$, then add the remaining vertices from $\overline{J}\cap G'$ and continue the
algorithm.

Therefore, for any $\overline{J}$ as in the assumption \ref{bek:j_02} (and with (\ref{eq:jell}))
we get that $\overline{J}\cap G'$ realizes $\Delta N(G')$. (Otherwise, we would be able to replace the subset
$\overline{J}\cap G'$ of $\overline{J}$ by another subset of $\ocalj\cap G'$ which would give larger
$\Delta(\overline{J}\cap G',G')=\Delta(\overline{J}\cap G',G)$, cf. also with (\ref{eq:''}), which would contradict
(\ref{eq:Delta}).)
Hence, (\ref{eq:Delta})  combined with (\ref{eq:''})  give:
\begin{equation*}\label{eq:DeltaN}
\Delta N(G)=\Delta N(G')+\Delta(\{j_1,\ldots, j_k\},G).
\end{equation*}

\noindent
{\bf I.a.} {\it \underline{Assume that $\Delta N(G') $ can be  realized by some $\overline{J}'$ in $G'$
which does not contain $\overline{j}$.}}

Then, we can apply the above statements for $\overline{J}=\overline{J}'\cup\{j_1,\ldots, j_k\}$.
Note that the Laufer algorithm runs in two independent regions cut by $\overline{j}$,
namely in $G^*_0$ and in  $G'\setminus \overline{j}$. Hence (\ref{eq:delta2}) guarantees that
$j_0$ is adjacent to $\cals^*_0$.

Furthermore, if $\widetilde{\cals}(G')$ is a support answering the problem
for $G'$, then $\widetilde{\cals}=\widetilde{\cals}(G') \cup \cals^*_0$ is a solution for $G$.
Note also that in this case $\cals^*_0$ coincides with the collection of components of
$\cals(\overline{I})$ sitting in $G^*_0$.

\vspace{2mm}

\noindent
{\bf I.b.} {\it \underline{Assume that all realizations of  $\Delta N(G') $  by some $\overline{J}'$ in $G'$
 contain $\overline{j}$.}}

Let $G'_{-1}$ be the graph obtained from $G'$ by replacing the decoration $\sigma_{\overline{j}}$ by
$\sigma_{\overline{j}}-1$. Then,  by \ref{bek:s_n}(P3),  we get
\begin{equation*}
\Delta N(G'_{-1})=\Delta N(G')-1.
\end{equation*}
By induction, one can find a support  $\widetilde{\cals}(G'_{-1})$ which solves the problem for  $G'_{-1}$.
Let $\calst$ be the connected (minimal) string in $G^*_0$ adjacent to both  $\overline{j}$ and $j_0$ (connecting them).

If $j_0$ is adjacent to $\cals^*_0$ then $\widetilde{\cals}= \widetilde{\cals}(G'_{-1})\cup \cals^*_0$
is a solution for $G$.

Otherwise  $\widetilde{\cals}= \widetilde{\cals}(G'_{-1})\cup \cals^*_0\cup \calst$
is a solution for $G$.

 \vspace{2mm}

\noindent
{\it {\bf II.} \underline{ The case when  $\overline{j}$ is adjacent to $\cals^*_0$. }}

\vspace{2mm}

Note  that in this case by the combinatorics of the choice of $j_0$ and by (\ref{eq:delta2})
we get that $j_0$ is adjacent to $\cals^*_0$ too.

We claim that for $G'_{-1}$ associated with the graph $G'$ and its vertex $\overline{j}$ one gets
\begin{equation*}\label{eq:DeltaN2}
\Delta N(G)=\Delta N(G'_{-1})+\Delta(\{j_1,\ldots, j_k\},G).
\end{equation*}
Moreover, $\widetilde{\cals}= \widetilde{\cals}(G'_{-1})\cup \cals^*_0$
is a solution for $G$.

\bekezdes\label{bek:j_03} {\bf B.} {\it \underline{Assume that for all  realizations of
$\Delta N(G)$ as $\Delta(\overline{J},G)$ one has
  $j_0\in \overline{J}$. }}

\vspace{2mm}

Replace in $G$ the decoration $\sigma_{j_0}$ by  $\sigma_{j_0}-1$, find a solution for
$G_{-1}$, then that solution works for $G$ too.
\end{proof}

This ends the proof of Proposition \ref{th:nonempty}. We continue with the contraction part.
\subsection{Additional properties of  $\widetilde{\cals}$.}\label{ss:tilde}
 Fix an integer $N$ and $\ii\in \overline{S}_N$ as in subsection \ref{ss:nonempty}.
 The cube $\ii$ determines the integer
  $N(G)=\max_{\overline{J}\subseteq \overline{I}}\chi_k(x({\bf i}+1_{\overline{J}}))$, cf.
 (\ref{eq:NN3}).
  Choose $\widetilde{J}\subseteq \overline{I}$ which realizes this maximum:
$N(G)=\chi_k(x({\bf i}+1_{\widetilde{J}}))$.   $N(G)$  is the smallest integer $N$
 for which $\ii\in \overline{S}_N$.

 Theorem \ref{th:nonempty} applied for $\ii$ and $N=N(G)$
 provides a cycle $x({\bf i})+E_{\widetilde{\cals}}$ with
 $\widetilde{\cals}\subseteq \cals\ii $  and
 \begin{equation}\label{eq:ssng}
 \chi_k(x({\bf i})+E_{\widetilde{\cals}}+E_{\overline{J}})\leq N(G) \ \ \mbox{for any $\overline{J}
 \subseteq \overline{I}$}.
 \end{equation}
In the next paragraphs we will list some additional properties of  $\widetilde{\cals}$ and $\widetilde{J}$.
\begin{lemma}\label{lem:tildeS} (a) \ $\chi_k(x({\bf i}) +E_{\widetilde{\cals}}+E_{\widetilde{J}})=N(G)$.
In particular, the weight of the cube $(x({\bf i})+E_{\widetilde{\cals}},\overline{I})$ is $N(G)$.

(b) (i) There exists a computation sequence $\{x_n\}_{n=0}^t$ with
$x_0=x({\bf i})+E_{\widetilde{J}}$ and $x_t=x({\bf i})+E_{\widetilde{J}}+E_{\widetilde{\cals}}$ such that
$\chi_k(x_{n+1})\leq \chi_k(x_n)$ for any $n$.

(ii) There exists a computation sequence $\{y_n\}_{n=0}^{t'}$ with
$y_0=x({\bf i})+E_{\widetilde{J}}+E_{\widetilde{\cals}}$ and $y_{t'}=
x({\bf i})+E_{\widetilde{J}}+E_{\cals\ii}$ such that
$\chi_k(x_{n+1})\geq \chi_k(x_n)$ for any $n$.

(c) Using the notation  $\sigma_j(J)$ from \ref{bek:sigma}, one has:
$$\begin{array}{lll} (i) & \sigma_j(\widetilde{\cals})\geq 0 & \ \mbox{if \ $j\in \widetilde{J}$}\\
(ii) & \sigma_j(\widetilde{\cals}
)\leq 0 & \ \mbox{if \ $j\not\in \widetilde{J}$}.\end{array}$$

\end{lemma}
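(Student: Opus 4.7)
For part (a) I would prove the asserted equality by two opposite inequalities. The upper bound $\chi_k(x({\bf i})+E_{\widetilde{J}}+E_{\widetilde{\cals}})\leq N(G)$ is just (\ref{eq:ssng}) specialized at $\overline{J}=\widetilde{J}$. For the lower bound observe that $x({\bf i})+E_{\widetilde{J}}+E_{\widetilde{\cals}}$ shares its $\ocalj$-coordinates with $x({\bf i}+1_{\widetilde{J}})$, because $\widetilde{J}\subseteq\ocalj$ while $\widetilde{\cals}\subseteq\calj^{*}$; hence Lemma \ref{lem:laufb} applied with parameter ${\bf i}+1_{\widetilde{J}}$ yields $\chi_k(x({\bf i})+E_{\widetilde{J}}+E_{\widetilde{\cals}})\geq \chi_k(x({\bf i}+1_{\widetilde{J}}))=N(G)$. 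The claim about the weight of the cube $(x({\bf i})+E_{\widetilde{\cals}},\overline{I})$ follows immediately: all of its vertex weights are $\leq N(G)$ by (\ref{eq:ssng}), while the $\widetilde{J}$-corner attains $N(G)$.

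Part (b)(i) is then essentially free from Proposition \ref{prop:G}(II) applied with $\overline{J}=\widetilde{J}$ and the given $\widetilde{\cals}$: its hypothesis $\chi_k(x({\bf i})+E_{\widetilde{J}\cup\widetilde{\cals}})=\chi_k(x({\bf i}+1_{\widetilde{J}}))$ is exactly (a), and the conclusion produces the non-increasing computation sequence. As a byproduct one also extracts the inclusion $\widetilde{\cals}\subseteq\cals({\bf i},\widetilde{J})$, which feeds into (b)(ii).

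Part (b)(ii) is the crux; I would build the sequence in two concatenated stages. \emph{Stage~1} applies Proposition \ref{prop:G}(I) with $\overline{J}=\widetilde{J}$ and $\cals'=\widetilde{\cals}$ (legal by the inclusion just noted) to obtain a Laufer-type sequence from $x({\bf i})+E_{\widetilde{J}\cup\widetilde{\cals}}$ to $x_{t_{s}}=x({\bf i})+E_{\widetilde{J}\cup\cals({\bf i},\widetilde{J})}$; this sequence is non-increasing in $\chi_k$ (Lemma \ref{lem:lauf}), and since both endpoints equal $N(G)$ — start by (a), end by \ref{prop:G}(I)(b) — it is in fact constant at $N(G)$. \emph{Stage~2} enlarges the $\calj^{*}$-support from $\cals({\bf i},\widetilde{J})$ to $\cals\ii$; first note that every $\widetilde{J}$-Laufer step is also a valid $\overline{I}$-Laufer step (adding $E_{\overline{I}\setminus\widetilde{J}}$ can only increase $(x_n+l'_{[k]},E_{j(n)})$), so $\cals({\bf i},\widetilde{J})\subseteq\cals\ii$, and the difference $F:=\cals\ii\setminus\cals({\bf i},\widetilde{J})\subseteq\calj^{*}$ is a forest since $G$ is a tree. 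The halting of the $\widetilde{J}$-Laufer at $x_{t_{s}}$ means $\sigma_{j}(\widetilde{J}\cup\cals({\bf i},\widetilde{J}))\geq 1$ for every $j\in\calj^{*}\setminus\cals({\bf i},\widetilde{J})$, in particular for every $j\in F$. I enumerate the vertices of $F$ by a DFS (or BFS) traversal of each of its components, so that each new vertex $j_{m}$ has at most one $G$-neighbor inside $\{j_{1},\ldots,j_{m-1}\}$. This yields
\[
\sigma_{j_{m}}\!\bigl(\widetilde{J}\cup\cals({\bf i},\widetilde{J})\cup\{j_{1},\ldots,j_{m-1}\}\bigr)\;\geq\;\sigma_{j_{m}}(\widetilde{J}\cup\cals({\bf i},\widetilde{J}))-1\;\geq\;0,
\]
which is precisely the non-decrease condition $\chi_k(y_{m-1}+E_{j_{m}})\geq\chi_k(y_{m-1})$. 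Concatenating Stage~1 and Stage~2 produces the required sequence from $x({\bf i})+E_{\widetilde{J}\cup\widetilde{\cals}}$ to $x({\bf i})+E_{\widetilde{J}\cup\cals\ii}$.

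Part (c) is a one-step comparison of $\chi_k$-values. For (i), using that distinct vertices of $\ocalj$ are non-adjacent by \ref{ss:assumption}, the difference $\chi_k(x({\bf i})+E_{\widetilde{J}\cup\widetilde{\cals}})-\chi_k(x({\bf i})+E_{(\widetilde{J}\setminus\{j\})\cup\widetilde{\cals}})$ collapses to $\sigma_{j}(\widetilde{\cals})$; by (a) and (\ref{eq:ssng}) the left hand side equals $N(G)-(\,\text{something}\leq N(G)\,)\geq 0$, giving $\sigma_{j}(\widetilde{\cals})\geq 0$. Part (ii) is the symmetric comparison with $\widetilde{J}\cup\{j\}$ in place of $\widetilde{J}\setminus\{j\}$ and gives $\sigma_{j}(\widetilde{\cals})\leq 0$. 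The principal obstacle in this plan is Stage~2 of (b)(ii): the need to exploit the termination of the $\widetilde{J}$-Laufer to obtain the uniform slack $\sigma_{j}(\widetilde{J}\cup\cals({\bf i},\widetilde{J}))\geq 1$ on $F$, and to match this slack against the tree/forest structure via a careful choice of enumeration so that monotonicity holds at every single step.
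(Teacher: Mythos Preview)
Your proof is correct and, for parts (a), (b)(i) and (c), essentially identical to the paper's. The only substantive difference is Stage~2 of (b)(ii): where the paper simply invokes Proposition~\ref{prop:G}(III) with $\overline{J}=\widetilde{J}$ and $l^{*}=E_{\cals\ii\setminus\cals({\bf i},\widetilde{J})}$ to obtain the non-decreasing sequence, you re-derive the needed monotonicity by hand from the tree structure. Your argument is sound --- the termination of the $\widetilde{J}$-Laufer at $x_{t_s}$ indeed gives $\sigma_{j}(\widetilde{J}\cup\cals({\bf i},\widetilde{J}))\geq 1$ on $F$ (this is the content of the choice of $t_s=t_1$ in the proof of \ref{prop:G}(I)), and the DFS/BFS ordering on the forest $F$ loses at most one unit of slack per step --- but it duplicates work already packaged in \ref{prop:G}(III), whose proof uses the rationality criterion $\chi_{can}(l^{*})>0$ rather than an explicit tree traversal. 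Citing \ref{prop:G}(III) is shorter; your route is more self-contained and makes the mechanism transparent.
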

\begin{proof} Note that
$$N(G)\stackrel{(1)}{=}
\chi_k(x({\bf i} +1_{\widetilde{J}}))\stackrel{(2)}{\leq}
\chi_k(x({\bf i}) +E_{\widetilde{\cals}}+E_{\widetilde{J}})
\stackrel{(3)}{\leq }N(G).$$
(1) follows from the definition of $N(G)$ and the choice of $\widetilde{J}$,
(2) from Lemma \ref{lem:laufb}, and (3) from Theorem \ref{th:nonempty} applied for $N=N(G)$. This proves  (a).
Identity (a) together with Proposition \ref{prop:G}(II) imply that $\widetilde{\cals}\subseteq
\cals({\bf i},\widetilde{J})$. Then there exists a  computation sequence connecting $x({\bf i})+E_{\widetilde{J}}$
with $x({\bf i})+E_{\widetilde{J}}+E_{\widetilde{\cals}}$  by  \ref{prop:G}(II), a sequence
connecting $x({\bf i})+E_{\widetilde{J}}+E_{\widetilde{\cals}}$  with
$x({\bf i})+E_{\widetilde{J}}+E_{\cals({\bf i},\widetilde{J})}$ by \ref{prop:G}(I), and finally, from
$x({\bf i})+E_{\widetilde{J}}+E_{\cals({\bf i},\widetilde{J})}$ to
$x({\bf i})+E_{\widetilde{J}}+E_{\cals({\bf i},\overline{I})}$ by \ref{prop:G}(III). This ends part (b).

Part (c) follows from (a) and equation (\ref{eq:ssng}) applied for $\widetilde{J}\setminus \{j\}$
(case $j\in \widetilde{J}$), respectively $\widetilde{J}\cup \{j\}$
(case $j\not\in \widetilde{J}$), and from the assumption \ref{ss:assumption}, which guarantees   $(E_j,E_{\widetilde{J}\setminus \{j\}})=0$.
\end{proof}

\bekezdes Let us recall what we already proved. For any fixed $\ii\in\overline{S}_N$ the space
 $\phi^{*}_N\ii$ is non-empty, cf. \ref{th:nonempty}, and it has the homotopy type of
 the product (cf. \ref{cor:con}):
 $$\Phi^{*}_N\ii=
 \psi(\phi^*_N\ii)\cap \{l^*\,:\, 0\leq l^*-\psi(x({\bf i}))\leq  E_{\cals\ii}\}
 \times \ii.$$
If $x\in \Phi^{*}_N\ii$ then  $x-x({\bf i})$ is reduced.  Moreover, $\Phi^{*}_N\ii$ has in it a distinguished $|I|$--dimensional
cube  $\{\psi(x({\bf i}))+E_{\widetilde{\cals}}\}\times \ii=(x({\bf i})+E_{\widetilde{\cals}},\overline{I})$.
Our goal is to construct a deformation retract from  $\Phi^{*}_N\ii$ to this cube (acting in the fiber direction).
This will be more complicated than the `standard' retractions
\ref{lem:con1}--\ref{lem:con2}--\ref{lem:con3}.
(Note that the point $x({\bf i})+E_{\widetilde{\cals}}+E_{\widetilde{J}}$
is not a $\chi_k$--minimal point of $\Phi^{*}_N\ii$,
it is maximal point in the direction $\ocalj$ and a minimal point in the direction $\calj^*$.)

To start with, we consider the connected components $\{G_\alpha\}_{\alpha\in A}$ of $\widetilde{\cals}$, and
the connected components $\{C_\beta\}_{\beta\in B}$ of $\cals\ii\setminus \widetilde{\cals}$.
During the contraction the supports $G_\alpha$ should be `added' and the supports $C_\beta$ should be `deleted'.
According to this, it is performed in several steps, during one step either we add one $G_\alpha$--type component, or we
delete one $C_\beta$--type component. At each step the fact that
which type is performed, or which $G_\alpha/C_\beta$ is manipulated is
decided by a technical `selection procedure'. This is the subject of the next Proposition, which will be applied at any
situation when the components $\{G_\alpha\}_{\alpha\in A'}$ still should be added and the
components $\{C_\beta\}_{\beta\in B'}$ still should be deleted: it chooses an element of $A'\cup B'$.
 The technical properties
associated with  the corresponding cases  will guarantee that the contraction stays below level $N$  of $\chi_k$.

Below, for any subset ${\calj'}\subseteq \ocalj$ and $i\in\calj^*$ we write $\calj'_i:=\{j\in \calj'\,:\, (E_i,E_j)=1\}$.
\begin{proposition}\label{prop:sel} {\bf (Selection Procedure)}
Fix subsets $A'\subseteq A$ and $B'\subseteq B$ such that $A'\cup B'\not=\emptyset$.
Then either there exists $\alpha\in A'$ such that
$$(i) \ \ \ \mbox{for every  $i\in |G_\alpha|$ and every $j\in \widetilde{J}_i$ one has
$\sigma_j(({\widetilde{\cals}}\setminus i )\cup \cup_{\beta\in B'}C_\beta ) >0$}\hspace{8mm}$$
or, there exists $\beta\in B'$ such that
$$(ii)  \ \ \mbox{for every  $i\in |C_\beta|$ and every
$j\in \overline{I}_i\setminus  \widetilde{J}$ one has
$\sigma_j(({\widetilde{\cals}}\cup i)\setminus \cup_{\alpha\in A'}G_\alpha)< 0$}.$$
\end{proposition}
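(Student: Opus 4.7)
The plan is to argue by contradiction. Assume both (i) and (ii) fail. Then for every $\alpha\in A'$ we may select a witness pair $(i_\alpha,j_\alpha)$ with $i_\alpha\in|G_\alpha|$, $j_\alpha\in\widetilde{J}_{i_\alpha}$ and
\[
\sigma_{j_\alpha}\bigl((\widetilde{\cals}\setminus i_\alpha)\cup\cup_{\beta\in B'}C_\beta\bigr)\leq 0,
\]
and for every $\beta\in B'$ a witness pair $(i_\beta,j_\beta)$ with $i_\beta\in|C_\beta|$, $j_\beta\in\overline{I}_{i_\beta}\setminus\widetilde{J}$ and
\[
\sigma_{j_\beta}\bigl((\widetilde{\cals}\cup i_\beta)\setminus\cup_{\alpha\in A'}G_\alpha\bigr)\geq 0.
\]
The strategy is to confront these witness inequalities with the structural information about $\widetilde{\cals}$ recorded in Lemma~\ref{lem:tildeS}(c) and ultimately derive a contradiction either with the tree-ness of $G$ or with the definition of $N(G)$ in (\ref{eq:NN3}).

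First I would unfold each witness inequality using $\sigma_j(J)=1-(E_j,x({\bf i})+l'_{[k]}+E_J)$, subtract the base-line inequality $\sigma_{j_\alpha}(\widetilde{\cals})\geq 0$ (respectively $\sigma_{j_\beta}(\widetilde{\cals})\leq 0$) coming from Lemma~\ref{lem:tildeS}(c), and apply the non-adjacency assumption of \S\ref{ss:assumption} inside $\ocalj$. The net effect is the adjacency statements
\[
(E_{j_\alpha},E_{\cup_{\beta\in B'}C_\beta})\geq 1,\qquad (E_{j_\beta},E_{\cup_{\alpha\in A'}G_\alpha})\geq 1.
\]
Since $G$ is a tree and each $G_\alpha$, $C_\beta$ is connected, $j_\alpha$ is in fact adjacent to a unique component $C_{\beta(\alpha)}$ (with $\beta(\alpha)\in B'$), and similarly $j_\beta$ is adjacent to a unique $G_{\alpha(\beta)}$ (with $\alpha(\beta)\in A'$). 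This defines two maps $A'\to B'$ and $B'\to A'$.

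Iterating these maps from any fixed $\alpha_0\in A'$ produces an alternating sequence $\alpha_0,\beta_0=\beta(\alpha_0),\alpha_1=\alpha(\beta_0),\beta_1=\beta(\alpha_1),\ldots$ whose successive terms are linked in $G$ by paths
\[
G_{\alpha_n}\ni i_{\alpha_n}\,\text{---}\,j_{\alpha_n}\,\text{---}\, \text{(vertex of $C_{\beta_n}$)}\,\subset\, C_{\beta_n}\,\supset\, i_{\beta_n}\,\text{---}\,j_{\beta_n}\,\text{---}\,\text{(vertex of $G_{\alpha_{n+1}}$)}.
\]
By finiteness of $A'\cup B'$ this sequence must revisit some component. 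Closing up the repeated loop gives a non-degenerate cycle in $G$, contradicting that $G$ is a tree; the main technical point is to check non-degeneracy, i.e.\ that the connecting vertices $j_{\alpha_n},j_{\beta_n}\in\overline{I}$ along one full loop are pairwise distinct (they are separated by membership in $\widetilde{J}$ versus its complement) and that the sequence does not instantaneously backtrack across the same edge, so that the loop is genuinely closed and of positive length.

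The main obstacle I anticipate is precisely this distinctness/non-backtracking argument. If the direct cycle-finding route turns out to be delicate, the back-up plan is analytic: use the witness inequalities to compare $\chi_k(x({\bf i})+E_{\widetilde{J}}+E_{\widetilde{\cals}})$, which equals $N(G)$ by Lemma~\ref{lem:tildeS}(a), with $\chi_k(x({\bf i})+E_{\widetilde{J}'}+E_T)$, where $T=(\widetilde{\cals}\setminus\cup_{\alpha\in A'}G_\alpha)\cup\cup_{\beta\in B'}C_\beta$ and $\widetilde{J}'=(\widetilde{J}\setminus\{j_\alpha\}_{\alpha\in A'})\cup\{j_\beta\}_{\beta\in B'}$. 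Expressing the difference as a sum of single-step $\sigma$-increments along a suitable computation sequence and bounding each increment using the witness inequalities together with Proposition~\ref{prop:G}(I)(b), one obtains a strict increase $\chi_k(x({\bf i})+E_{\widetilde{J}'}+E_T)>N(G)$, which combined with Lemma~\ref{lem:laufb} forces $\chi_k(x({\bf i}+1_{\widetilde{J}'}))>N(G)$ and contradicts the definition (\ref{eq:NN3}) of $N(G)$.
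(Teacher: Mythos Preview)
Your approach is essentially the paper's own: use Lemma~\ref{lem:tildeS}(c) to turn each witness inequality into an adjacency between the bad vertex $j_\alpha\in\widetilde{J}$ (resp.\ $j_\beta\notin\widetilde{J}$) and some component $C_\beta$ (resp.\ $G_\alpha$), then chain these alternately and invoke tree-ness. Two small remarks: your claim that $j_\alpha$ is adjacent to a \emph{unique} $C_{\beta(\alpha)}$ is not justified (just choose one); and the paper sidesteps your ``non-degeneracy'' worry by framing the argument as a terminating search rather than a full contradiction---starting from any $\alpha$, one alternately passes to a new component whenever the current one fails its condition, and since the connecting middle vertices alternate between $\widetilde{J}$ and $\overline{I}\setminus\widetilde{J}$, tree-ness forces all visited $\alpha$'s (and all visited $\beta$'s) to be distinct, so finiteness of $A'\cup B'$ ends the search at a good element. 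Your back-up analytic plan is not needed.
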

\begin{proof}
Fix some $\alpha\in A'$ and assume that it does not satisfy (i). Then there exists $i_\alpha \in |G_\alpha|$
and $j_\alpha\in \widetilde{J}_{i_\alpha}$ such that
$\sigma_{j_\alpha}(({\widetilde{\cals}}\setminus i )\cup \cup_{\beta\in B'}C_\beta ) \leq 0$.
Note that $\sigma_{j_\alpha}({\widetilde{\cals}}\setminus i )=
\sigma_{j_\alpha}({\widetilde{\cals}} )+(E_{j_\alpha},E_{i_\alpha})>0$ by \ref{lem:tildeS}(c). These two combined
prove the existence of some $\beta\in B'$ and $i_\beta\in|C_\beta|$ with $(E_{j_\alpha}, E_{i_\beta})=1$.

Symmetrically, if for some $\beta\in B'$ (ii) is not true, then there exists $i_\beta\in |C_\beta|$ and
$j_\beta\in \overline{I}_{i_\beta}\setminus \widetilde{J}$ with
$\sigma_{j_\beta}(({\widetilde{\cals}}\cup i_\beta)\setminus \cup_{\alpha\in A'}G_\alpha)\geq  0$.
Since by \ref{lem:tildeS}(c) we have $\sigma_{j_\beta}({\widetilde{\cals}}\cup i_\beta)=
\sigma_{j_\beta}({\widetilde{\cals}})-(E_{j_\beta},E_{i_\beta})<0$, we get the existence of some
$\alpha\in A'$ and $i_\alpha\in |G_\alpha|$ with $(E_{j_\beta}, E_{i_\alpha})=1$.

Now the proof runs as follows. Start with any $\alpha\in A'$. If it satisfy (i) we are done. Otherwise,
as in the first paragraph,
we get  a $\beta$, such that $G_\alpha$ and $C_\beta$ are connected by a length two  path
having the middle vertex in $\widetilde{J}$.
If this $\beta $ satisfy (ii) we stop, otherwise we get by the second paragraph an $\alpha'$
such that $C_\beta$ and  $G_{\alpha'}$ are connected by a length two path whose middle vertex is not in
$\widetilde{J}$. Since the graph $G$  has no cycles,
$\alpha'\not=\alpha$. Then we continue the procedure with $\alpha'$. Either it satisfies (i) or
$G_{\alpha'}$ is connected with some $C_{\beta'}$ with $\beta'\not=\beta$. Continuing in this way, all the involved
 $\alpha$ indices, respectively  all the  $\beta$ indices are pairwise distinct because of the non--existence of
 a cycle in the graph. Since $A'\cup B'$ is finite, the procedure must stop.
\end{proof}

\subsection{Contraction of  $\Phi^{*}_N\ii$.}\label{ss:cont} We will drop the symbol $\ii$ from the notation
$\Phi^*_N\ii$: we  write simply $\Phi_N^*$. On the other hand, for any pair
$\emptyset\subseteq \cals_1\subseteq \cals_2\subseteq \cals\ii$, we define
$$\Phi^*_N(\cals_1,\cals_2):=
[\psi(\phi^*_N\ii)\cap
 \{l^*\,:\, E_{\cals_1}\leq l^*-\psi(x({\bf i}))\leq  E_{\cals_2}\}]
 \times \ii.$$
For example, $\Phi^*_N(\emptyset,\cals\ii)=\Phi^*_N$, while
$\Phi^*_N(\widetilde{\cals},\widetilde{\cals})=
\{(\psi(x({\bf i}))+E_{\widetilde{\cals}})\}\times \ii$, the cube on which we wish to contract $\Phi^*_N$.

If the Selection Procedure chooses some $\alpha'\in A'$ then we have to construct a deformation retract
$$c_{\alpha'}:\Phi^*_N( \bigcup_{\alpha\not\in A'}|G_\alpha|\,,\,
 \widetilde{\cals}\cup \bigcup_{\beta\in B'}|C_\beta|) \longrightarrow
 \Phi^*_N( \bigcup_{\alpha\not\in A'\setminus \alpha'}|G_\alpha|\,,\,
 \widetilde{\cals}\cup \bigcup_{\beta\in B'}|C_\beta|).$$
Otherwise, if some   $\beta'\in B'$ is chosen then we have to construct a deformation retract
$$c_{\beta'}:\Phi^*_N(\bigcup_{\alpha\not\in A'}|G_\alpha|\,,\,
 \widetilde{\cals}\cup \bigcup_{\beta\in B'}|C_\beta|) \longrightarrow
 \Phi^*_N( \bigcup_{\alpha\not\in A'}|G_\alpha|\,,\,
 \widetilde{\cals}\cup \bigcup_{\beta\in B'\setminus \beta'}|C_\beta|).$$
Their composition (in the selected order)  provides the wished deformation retract
$\Phi^*_N\to \Phi^*_N(\widetilde{\cals},\widetilde{\cals})$.
The two types of contractions have some asymmetries,
hence we will provide the details for both of them.

\bekezdes {\bf The construction of $c_{\alpha'}$. } Let $|G_{\alpha'}|=\{j_1,\ldots , j_t\}$.
By the properties of $\widetilde{J}$, cf. \ref{lem:tildeS}(b),
we have a computation sequence with $\chi_k$ non--increasing from
$x({\bf i})+E_{\widetilde{J}}$ to $x({\bf i})+E_{\widetilde{J}\cup\widetilde{\cals}}$.
Since the components $\{G_\alpha\}_\alpha$ do not interact, we can permute elements
belonging to different components $G_\alpha$, hence we may assume that the first part
completed the components $\cup_{\alpha\not\in A'}G_{\alpha}$, then we complete $G_{\alpha'}$
and the order $\{j_1,\ldots , j_t\}$ is imposed by the computation sequence. Therefore, for any $1\leq n\leq t$,
\begin{equation}\label{eq:CS}
\sigma_{j_n}(\widetilde{J}\cup \cup_{\alpha\not \in A'}|G_\alpha|\cup\{j_1,\ldots,j_{n-1}\})\leq 0.
\end{equation}

The contraction $c_{\alpha'}$ will be a composition $c_{\alpha',t}\circ \cdots \circ c_{\alpha',1}$, where
$c_{\alpha',n}$ corresponds to the completion of the cycles with $E_{j_n}$ ($1\leq n\leq t$):
\begin{align*}c_{\alpha',n}:\Phi^*_N( \bigcup_{\alpha\not \in A'}|G_\alpha|\cup\{j_1,\ldots,j_{n-1}\}\,,\,
& \widetilde{\cals}\cup \bigcup_{\beta\in B'}|C_\beta|) \longrightarrow \\
& \Phi^*_N(\bigcup_{\alpha\not \in A'}|G_\alpha|\cup\{j_1,\ldots,j_n\}
 \,,\,
 \widetilde{\cals}\cup \bigcup_{\beta\in B'}|C_\beta|)
 \end{align*}
defined as follows. Write $x=x({\bf i})+E_{\overline{J}}+l^*$ ($l^*$ is reduced) with
\begin{equation}\label{eq:SUPL}
\cup_{\alpha\not \in A'}|G_\alpha|\cup\{j_1,\ldots,j_{n-1}\}\subseteq
|l^*|\subseteq  \widetilde{\cals}\cup \bigcup_{\beta\in B'}|C_\beta|.\end{equation}
Then
$$c_{\alpha',n}(x)=\left\{\begin{array}{ll}
x & \ \mbox{if $ j_n\in |l^*|$},\\
x+E_{j_n} & \ \mbox{if $ j_n\not\in |l^*|$}.\end{array}
\right.$$
Note that for any $l^*$ as above with $|l^*|\not\ni j_n$, the inequality (\ref{eq:CS}) implies
\begin{equation}\label{eq:CS2}
\sigma_{j_n}(\widetilde{J}\cup |l^*|)\leq 0.
\end{equation}
Fix such an $l^*$ with $|l^*|\not \ni j_n$. Then,
for {\it  any} $\overline{J}\subseteq \overline {I}$, we have to prove
\begin{equation}\label{eq:NNN}
\chi_k(x({\bf i})+E_{\overline{J}}+l^*+E_{j_n})\leq N.
\end{equation}
Set $\overline{J}(l^*):=\{j\in\overline{I}\,:\, \sigma_j(|l^*|)>0\}$.
We claim that if (\ref{eq:NNN}) is valid for $\overline{J}(l^*)$ then it is valid for every $\overline{J}\subseteq
\overline{I}$. This follows from the next identity whose second term is
$\leq 0$ by the definition of $\overline{J}(l^*)$.
\begin{equation}\label{eq:MMM}\begin{split}
&\chi_k(x({\bf i})+E_{\overline{J}}+l^*+E_{j_n})-
\chi_k(x({\bf i})+E_{\overline{J}(l^*)}+l^*+E_{j_n})\\
&=\sum _{j\in \overline{J}\setminus \overline{J}(l^*)}\big[\sigma_j(|l^*|)-(E_j,E_{j_n})\big]
-
\sum _{j\in \overline{J}(l^*)\setminus \overline{J}}\big[\sigma_j(|l^*|)-(E_j,E_{j_n})\big].
\end{split}\end{equation}
On the other hand, using  Selection Procedure (and its notations) we get
$\widetilde{J}_{j_n}\subseteq \overline{J}(l^*)$. Indeed, by the choice of $\alpha'$ in \ref{prop:sel}(i),
for $j_n\in|G_{\alpha'}|$ and for any $j\in \widetilde{J}_{j_n}$ one has
$\sigma_j(\widetilde{\cals}\setminus j_n\cup\cup_{\beta\in B'}C_\beta)>0$.
Then
$\sigma_j(|l^*|)>0$ by the support condition (\ref{eq:SUPL}).
Then $\widetilde{J}_{j_n}\subseteq \overline{J}(l^*)$ implies:
\begin{equation}\label{eq:SIGIN}
\sigma_{j_n}(\overline{J}(l^*)\cup |l^*|)\stackrel{(1)}{\leq} \sigma_{j_n}(\widetilde{J}_{j_n}\cup |l^*|)
\stackrel{(2)}{=}
\sigma_{j_n}(\widetilde{J}\cup |l^*|)\stackrel{(3)}{\leq} 0.
\end{equation}
(1) follows from $\widetilde{J}_{j_n}\subseteq \overline{J}(l^*)$, (2) from
$(E_{j_n},E_{\widetilde{J}_{j_n}})=(E_{j_n},E_{\widetilde{J}})$, and (3)  from (\ref{eq:CS2}).
Therefore,
$$\chi_k(x({\bf i})+E_{\overline{J}(l^*)}+l^*+E_{j_n})-
\chi_k(x({\bf i})+E_{\overline{J}(l^*)}+l^*)=\sigma_{j_n}(\overline{J}(l^*)\cup |l^*|)\leq 0.$$
Since $\chi_k(x({\bf i})+E_{\overline{J}(l^*)}+l^*)\leq N$ (by induction),
(\ref{eq:NNN}) is valid for $\overline{J}(l^*)$.

\bekezdes {\bf The construction of $c_{\beta'}$ }. Let $|C_{\beta'}|=V_1 \cup V_2$, where $V_1:=|C_{\beta'}|\cap (\cals(\vasi,\widetilde{J})\setminus \widetilde\cals)$ and $V_2:=|C_{\beta'}|\cap (\cals\ii\setminus \cals(\vasi,\widetilde{J}))$. The Laufer computation sequence given by \ref{prop:G}(I) connecting $x(\vasi)+E_{\widetilde{J}}+E_{\widetilde\cals}$ with $x(\vasi)+E_{\widetilde{J}}+E_{\cals(\vasi,\widetilde{J})}$ gives an ordering on $V_1=\{j_1,\ldots,j_{t_s}\}$ with the property
\begin{equation}\label{eq:CS12}
\sigma_{j_n}(\widetilde{J}\cup \{j_1,\ldots,j_{n-1}\})=0
\end{equation}
for every $1\leq n \leq t_s$. Similarly, applying \ref{prop:G}(III) for $E_{\cals\ii \setminus \cals(\vasi,\widetilde{J})}$ we have an ordering on $V_2=\{j_{t_s+1},\ldots,j_t\}$ such that
\begin{equation}\label{eq:CS13}
\sigma_{j_n}(\widetilde{J}\cup \{j_1,\ldots,j_{n-1}\})\geq 0
\end{equation}
for every $t_s+1\leq n \leq t$.

The contraction $c_{\beta'}$ will be $c_{\beta',1}\circ \ldots \circ c_{\beta',t}$, where $c_{\beta',n}$ corresponds to the deletion of the cycles with $E_{j_n}$ ($1\leq n\leq t$), i.e.
\begin{align*}
c_{\beta',n}:\Phi^*_N( \bigcup_{\alpha\not \in A'}|G_\alpha|\,,\,
& \widetilde{\cals}\cup \bigcup_{\beta\in B'}|C_\beta|\setminus \{j_{n+1},\ldots,j_t\}) \longrightarrow
\\ & \Phi^*_N(\bigcup_{\alpha\not \in A'}|G_\alpha|
 \,,\,
 \widetilde{\cals}\cup \bigcup_{\beta\in B'}|C_\beta| \setminus \{j_{n},\ldots,j_t\})
 \end{align*}
defined in the following way. Write $x=x(\vasi)+E_{\widetilde{J}}+l^*$ with
\begin{equation}\label{eq:SUPL2}
\cup_{\alpha\not \in A'}|G_\alpha|\subseteq
|l^*|\subseteq  \widetilde{\cals}\cup \bigcup_{\beta\in B'}|C_\beta|\setminus \{j_{n+1},\ldots,j_t\},
\end{equation}
then
$$c_{\beta',n}(x)=\left\{\begin{array}{ll}
x & \ \mbox{if $ j_n\not\in |l^*|$},\\
x-E_{j_n} & \ \mbox{if $ j_n\in |l^*|$}.\end{array}
\right.$$
Fix such an $l^*$ with $j_n\in |l^*|$, then we have to prove
\begin{equation}\label{eq:NNN2}
\chi_k(x({\bf i})+E_{\overline{J}}+l^*-E_{j_n})\leq N
\end{equation}
for any $\overline{J}\subseteq \overline{I}$. In this case the inequalities (\ref{eq:CS12}) and (\ref{eq:CS13}) implies
\begin{equation}\label{eq:CS22}
\sigma_{j_n}(\widetilde{J}\cup |l^*|\setminus j_n)\geq 0.
\end{equation}

Here we set $\overline{J}(l^*):=\{j\in \overline{I}\,:\, \sigma_j(|l^*|)\geq 0 \}$. Then if (\ref{eq:NNN2}) is valid for $\overline{J}(l^*)$ then it is so for any $\overline{J}\subseteq \overline{I}$. Indeed,
\begin{equation}\label{eq:MMM2}\begin{split}
&\chi_k(x({\bf i})+E_{\overline{J}}+l^*-E_{j_n})-
\chi_k(x({\bf i})+E_{\overline{J}(l^*)}+l^*-E_{j_n})\\
&=\sum _{j\in \overline{J}\setminus \overline{J}(l^*)}\big[\sigma_j(|l^*|)+(E_j,E_{j_n})\big]
-
\sum _{j\in \overline{J}(l^*)\setminus \overline{J}}\big[\sigma_j(|l^*|)+(E_j,E_{j_n})\big]\leq 0,
\end{split}\end{equation}
by the definition of $\overline{J}(l^*)$.
By the selection of $\beta'$ via \ref{prop:sel}(ii), for $j_n\in |C_\beta'|$ and for any $j\in \overline{I}_{j_n}\setminus \widetilde{J}$ one has $\sigma_j(({\widetilde{\cals}}\cup j_n)\setminus \cup_{\alpha\in A'}G_\alpha)< 0$, hence $\sigma_j(|l^*|)<0$, in other words $\overline{J}(l^*)\subseteq \widetilde{J}$.
Finally, from (\ref{eq:CS22}) we can deduce the inequality
\begin{align*}
\chi_k(x({\bf i})+E_{\overline{J}(l^*)}+l^*-E_{j_n})- &
\chi_k(x({\bf i})+E_{\overline{J}(l^*)}+l^*)=\\
& -\sigma_{j_n}(\overline{J}(l^*)\cup |l^*|\setminus j_n)\leq -\sigma_{j_n}(\widetilde{J}\cup |l^*|\setminus j_n)\leq 0.\end{align*}

\section{Application. Series and the Seiberg--Witten invariants.}\label{s:5}

\subsection{Seiberg--Witten invariants.}
Recall that the Seiberg--Witten invariants of the oriented 3--manifold $M$
are rational numbers $\frsw_{\frs}(M)$ associated with the $spin^c$--structures $\frs$ of $M$.
In terms of Heegaard--Floer homology $$HF^+(M)=\oplus_{\frs\in Spin^c(M)}HF^+(M,\frs)=
\oplus_{\frs\in Spin^c(M)}\big(\calt_{d(M,\frs)}\oplus HF^+_{red}(M,\frs)\big)$$ (for details see
articles of Ozsv\'ath and Szab\'o, e.g. \cite{OSz,OSz7}) it can be recovered as
$$\frsw_{\frs}(M)={\rm rank}_{\Z} HF^+_{red,even}(M,\frs)-{\rm rank}_{\Z} HF^+_{red,odd}(M,\frs)
-d(M,\frs)/2.$$
In \cite{NSW}  is proved that the normalized Euler characteristics of the lattice cohomology
also agrees with the Seiberg--Witten invariant (note that the weight function
of \cite{NSW} is shifted compared with the present one; see the comment
after  \ref{NSW} as well). This reads as follows.
\begin{theorem}[\cite{NSW}]\label{th:ECH} For any characteristic element  $k$ one has
\begin{equation}\label{th:SW}
-\frsw_{-[k]}(M(G))-(k^2+|\calj|)/8= eu(\bH^*(G,k)),\end{equation}
where $eu(\bH^*(G,k)):=-\m_k+
\textstyle{\sum_q}(-1)^q\rank_\Z \bH^q_{red}(G,k)$.
\end{theorem}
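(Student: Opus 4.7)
My strategy is to transfer the computation of $eu(\bH^*(G,k))$ to the reduced lattice via the Reduction Theorem, express the result as an alternating cube-sum on $\overline{L}$, and then match it with the normalized Seiberg--Witten invariant through the periodic constant of the multivariable topological Poincar\'e series introduced in Section \ref{s:5}. By Theorem \ref{red} there is a graded $\Z[U]$--module isomorphism $\bH^*(G,k_r)\cong \bH^*((\Z_{\geq 0})^\nu,\overline{w}[k])$, so the minimal weights agree and the reduced modules are isomorphic. Combined with Corollary \ref{corF1} this gives
\[
eu(\bH^*(G,k))=-\min_{\vasi\in(\Z_{\geq 0})^\nu}\chi_{k_r}(x(\vasi))+\sum_{q=0}^{\nu-1}(-1)^q\rank_{\Z}\bH^q_{red}\bigl((\Z_{\geq 0})^\nu,\overline{w}[k]\bigr).
\]

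Next I would convert this into a finite combinatorial sum. Via the $\bS^*$--realization (Theorem \ref{th:HS}), $\bH^q_{red}\cong \bigoplus_N \widetilde{H}^q(\overline{S}_N)$, and each $\overline{S}_N\subseteq \R^\nu$ is a finite cubical complex. Applying Euler-characteristic additivity at each level $N$ and an Abel summation/telescoping in $N$ rewrites the right-hand side as an alternating sum
\[
-eu(\bH^*(G,k))=\sum_{(\vasi,\overline{I})}(-1)^{|\overline{I}|}\,f\bigl(\overline{w}(\vasi,\overline{I})\bigr)
\]
indexed by cubes of $\overline{L}$, where $f$ is a simple linear functional and $\overline{w}(\vasi,\overline{I})=\max_{\overline{J}\subseteq\overline{I}}\chi_{k_r}(x(\vasi+1_{\overline{J}}))$. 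The recursion of Proposition \ref{propF1} expresses the coordinate differences $\overline{w}_0(\vasi+1_j)-\overline{w}_0(\vasi)$ as $1-(x(\vasi)+l'_{[k]},E_j)$, so the cube-sum is encoded purely by the intersection data of $G$ and the class $[k]$.

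Then I would identify this combinatorial expression with the periodic constant $pc_{[k]}(Z)$ of the multivariable topological Poincar\'e series $Z(\mathbf{t})$ of $G$, restricted to the $spin^c$--class $[k]$. The coefficients of $Z(\mathbf{t})$ are the rank-jumps of a natural filtration indexed by $\overline{L}$, and Proposition \ref{propF1} together with the universal properties of $x(\vasi)$ established in Section \ref{s:4a} (notably Proposition \ref{prop:G}) matches these rank-jumps with the cube weights $\overline{w}(\vasi,\overline{I})$. Finally I would invoke the identity
\[
pc_{[k]}(Z)=-\frsw_{-[k]}(M(G))-(k^2+|\calj|)/8
\]
from \cite{NSW,BN,NO}, which closes the chain of identifications.

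The main technical obstacle is the cube-sum assembly in the second step: one must control the cancellations in the telescoping over $N$ carefully enough to see that the boundary contributions either vanish or collect into the normalization term $(k^2+|\calj|)/8$. For $\nu=1$ this reduces to the Seifert case treated in \cite{OSZINV,SWII}; for general $\nu$, the multivariable combinatorics becomes delicate, and it is precisely here that the universal cycles $x(\vasi)$ together with the structural results of Section \ref{s:4a} make the argument go through.
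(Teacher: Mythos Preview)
The paper does not prove this theorem. Theorem~\ref{th:ECH} is a quoted result from \cite{NSW}; it appears here as background, and the present paper offers no independent argument for it. So there is no ``paper's own proof'' to compare against---the paper simply cites \cite{NSW}.

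Your proposal does not give an independent proof either. The final step invokes the identity
\[
pc_{[k]}(Z)=-\frsw_{-[k]}(M(G))-(k^2+|\calj|)/8
\]
from \cite{NSW,BN,NO}. In \cite{NSW} this identity is established precisely by showing that the periodic constant equals $eu(\bH^*(G,k))$ (this is essentially Theorem~\ref{NSW}(2) here) and then combining with the identification of $eu$ with the normalized Seiberg--Witten invariant, which is Theorem~\ref{th:ECH} itself. So your argument is circular: you route through the Reduction Theorem only to appeal, at the end, to a statement whose proof in the cited sources already contains the theorem you are trying to establish. The Reduction Theorem is logically downstream of Theorem~\ref{th:ECH} in this paper, not a tool for deriving it.

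Separately, the middle step---rewriting $eu$ as an alternating cube-sum $\sum(-1)^{|\overline{I}|}f(\overline{w}(\vasi,\overline{I}))$ with an unspecified ``simple linear functional $f$''---is too vague to be assessed. The precise version of this formula (with $f$ the identity and a specific finite-rectangle cutoff) is what \cite[Theorem~2.3.7]{NSW} provides, and is used in this paper in the proof of Theorem~\ref{thm:redZ}; but again that is machinery from \cite{NSW}, not something you have supplied.
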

To emphasize the role of the Reduction Theorem, let us also write
$$eu(\bH^*(\overline{L},\overline{w}[k])):=-\m_k+
\textstyle{\sum_q}(-1)^q\rank_\Z \bH^q_{red}(\overline{L},\overline{w}[k])$$
computed from the reduced lattice. Then, evidently,
 the right hand side of (\ref{th:SW}) can be replaced by $eu(\bH^*(\overline{L},\overline{w}[k]))$.
 Recall also, cf. \ref{corF1}, that $\bH^q_{red}(\overline{L},\overline{w}[k])=0$ for
 $q\geq \nu$.

\subsection{Multivariable series.}\label{ss:series}
The above statement connecting the Euler characteristic of the lattice cohomology
with the normalized Seiberg--Witten invariant can be lifted to the level of a
combinatorial `zeta function'.  This object,  associated with the  plumbing graph,
is defined as follows. It is the multivariable Taylor expansion
$\sum_{l'}p_{l'}{\bt}^{l'}$ at the origin of
\begin{equation}\label{eq:Zdef}
Z({\bf t})=\prod_{j\in \calj}(1-{\bf t}^{E_j^*})^{\delta_j-2}\end{equation}
where $\delta_j$ is the valency of the vertex $j$, and
for any $l'=\sum_j l_j E_j \in L'$ we write ${\bf t}^{l'}:=\prod_{j\in \calj} t_{j}^{l_j}$. This lives in
$\Z[[L']]$, the $\Z[L']$--submodule of formal power series $\Z[[\bt^{\pm 1/d}]]$ in variables $\{t_j^{\pm 1/d}\}_j$, where
$d=\det(-\frI)$.

$Z(\bt)$ has a  unique natural decomposition $Z({\bf t})=\sum_{h\in H}Z_{h}({\bf t})$, where
$Z_{h}({\bf t})=\sum_{[l']=h}p_{l'}{\bf t}^{l'}$. 
Sometimes we also write $Z_{l'}$ for $Z_{[l']}$.

Since the rational Lipman cone $\calS'$ is $\Z_{\geq 0}\langle E_j^*\rangle_j$,
definition ({\ref{eq:Zdef}) shows that
$Z(\bt)$ is supported in $\calS'$, hence $Z_{l'}(\bt)$ is supported in $(l'+L)\cap \calS'$.

$Z$ is also called the {\em combinatorial Poincar\'e series}. This is motivated by the following fact, for details
see \cite{CDGPs,CDGEq,NPS,NCL}.  We may consider the equivariant divisorial Hilbert series $\calH(\bt)$ of
a normal surface singularity $(X,0)$ with fixed resolution graph $G$.
The key point connecting $\calH(\bt)$ with the topology of the link $M$ and the
graph $G$ is introducing the series $\calP(\bt)=-\calH(\bt) \cdot \prod_j(1-t_j^{-1})\in \Z[[L']]$. Then $Z(\bt)$ is the
topological candidate for $\calP(\bt)$. They agree for several singularities, e.g. for splice quotients (see \cite{NCL}), which
contain all the rational, minimally elliptic or weighted homogeneous singularities.
Motivated by analytic properties  of $\calP$ (see e.g. \cite{NCL}),
the second author proved that $Z(\bt)$ also codifies the
Seiberg--Witten invariants of the link $M$, and it is  related in a subtle way with the
weight function of the lattice complex. We recall these facts next.

\begin{theorem}[\cite{NSW}]\labelpar{NSW} Fix  one of the elements $l'_{[k]}$.
Then the following facts hold.

(1)\begin{equation*}\label{eq:Z}
Z_{\lk}(\bt)=
\sum_{l\in L}\,\Big(\sum_{I\subseteq \mathcal{J}} (-1)^{|I|+1}w(l,I)\Big)\, \vast^{l+l'_{[k]}}.
\end{equation*}

(2) Fix some   $l\in L$ with  $l+\lk \in -k_{can}+{\rm interior}(\calS')$. Then
\begin{equation*}
\sum_{\overline{l}\in L,\, \overline l \not\geq l}p_{\overline{l}+\lk }=
\chi_{k_r}(l)+eu( \bH^*(G,k_r).\end{equation*}
Hence, the truncated summation from the left hand side admits
a multivariable Hilbert polynomial, where the non-free part is provided by the quadratic
weight function $\chi_{k_r}(l)$, while the free term is $eu( \bH^*(G,k_r)$, the expression
which appears in Theorem \ref{th:ECH} as well.
 \end{theorem}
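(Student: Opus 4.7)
The plan is to prove part (1) by matching coefficients in the power-series expansion of $Z(\bt)$, and then derive part (2) from (1) via truncation and the $\bS^*$-realization of the lattice cohomology (Theorem~\ref{th:HS}).

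For (1), I would fix $l\in L$ and compare coefficients of $\bt^{l+l'_{[k]}}$ on both sides. On the left, expanding $Z(\bt)=\prod_{j\in\calj}(1-\bt^{E_j^*})^{\delta_j-2}$ presents $p_{l+l'_{[k]}}$ as an alternating sum over decompositions of $l+l'_{[k]}$ into nonnegative combinations of the $E_j^*$'s: end-vertex factors ($\delta_j=1$) expand as geometric series, node factors as polynomials, and chain vertices contribute trivially. On the right, using
\[\chi_{k_r}(l+E_{I'})=\chi_{k_r}(l)+\chi_{can}(E_{I'})-(E_{I'},l+l'_{[k]}),\]
I would reorganize $\sum_I(-1)^{|I|+1}w(l,I)$ via a M\"obius-inversion argument on the Boolean lattice of subsets $I\subseteq\calj$. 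The identity would first be checked for $l$ with $l+l'_{[k]}$ deep inside $-k_{can}+\calS'$, where $(E_j,l+l'_{[k]})\leq 0$ forces the maximum in $w(l,I)$ to be attained at $I'=I$ (so $w(l,I)=\chi_{k_r}(l+E_I)$), and then extended to arbitrary $l\in L$ by induction, tracking how both sides transform under the unit lattice shifts $l\mapsto l+E_j$.

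For (2), starting from (1), exchange the order of summation:
\[\sum_{\bar l\not\geq l} p_{\bar l+l'_{[k]}}=\sum_{(\bar l,I):\,\bar l\not\geq l}(-1)^{|I|+1}w(\bar l,I),\]
and group cubes by their weight $N=w(\bar l,I)$. For each $N$, the alternating sum of cubes with $\bar l\not\geq l$ and weight $\leq N$ computes (up to sign) the Euler characteristic of $S_N$ minus the subcomplex lying in $l+[0,\infty)^s$. For $l+l'_{[k]}\in -k_{can}+{\rm interior}(\calS')$, these regions stabilize homotopically as $N\to\infty$: the contributions from cubes far outside cancel thanks to the strict positivity of $\chi_{k_r}$ in the interior of the cone, leaving $eu(\bH^*(G,k_r))$ (via Theorem~\ref{th:HS}) as the free term, while the quadratic term $\chi_{k_r}(l)$ arises from the weight $w(l,\emptyset)=\chi_{k_r}(l)$ of the corner vertex $l$.

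The main obstacle is part (1): matching the convolution-type coefficients of the product $\prod_j(1-\bt^{E_j^*})^{\delta_j-2}$, whose signs come from expanding negative-exponent factors (end vertices) and positive-exponent factors (nodes) in asymmetric ways, with the alternating sums of maxima of the quadratic function $\chi_{k_r}$ over cube vertices. The asymmetry between $E_j^*$ (driving $Z$) and $E_j$ (driving the cube structure) is bridged only by the pairing $(E_j^*,E_i)=-\delta_{ij}$, and carefully controlling which $I'\subseteq I$ realizes the maximum in $w(l,I)$ as $l$ varies---and how the inclusion-exclusion signs cancel accordingly---is the technical heart of the argument.
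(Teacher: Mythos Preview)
This theorem is quoted from \cite{NSW} and is not proved in the present paper; the bracketed remark after the statement only reconciles the notational shift between the weight $-(k^2+|\calj|)/8$ used in \cite{NSW} and $\chi_{k_r}$ here. So there is no in-paper proof to compare against. I can nevertheless evaluate your sketch.

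For part (1) there is a concrete error in the proposed base case. You claim that for $l$ with $(E_j,l+l'_{[k]})\leq 0$ for all $j$ the maximum in $w(l,I)=\max_{I'\subseteq I}\chi_{k_r}(l+E_{I'})$ is attained at $I'=I$. This is false: one computes
\[
\chi_{k_r}(l+E_I)-\chi_{k_r}(l+E_{I'})=\chi_{can}(E_{I\setminus I'})-(E_{I\setminus I'},E_{I'})-(E_{I\setminus I'},l+l'_{[k]}),
\]
and the first two terms combine to $\#\text{comp}(I)-\#\text{comp}(I')$, which is negative whenever removing $I\setminus I'$ disconnects $I$. Only under the much stronger hypothesis $(E_j,l+l'_{[k]})<-\max_j\delta_j$ does your claim hold, and in that regime one checks directly that \emph{both} sides of (1) vanish (for $|\calj|\geq 3$, $\sum_I(-1)^{|I|}\chi_{k_r}(l+E_I)=0$, and the product expansion forces $p_{l+l'_{[k]}}=0$ once $-(l+l'_{[k]},E_j)>0$ at any chain vertex or exceeds $\delta_j-2$ at any node). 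So the base case is the trivial identity $0=0$ and carries no information.

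The entire content must then come from your induction, but ``tracking how both sides transform under $l\mapsto l+E_j$'' is not an argument: neither $p_{l+l'_{[k]}}$ nor $\sum_I(-1)^{|I|+1}w(l,I)$ satisfies any simple recurrence under such shifts. The left side is a single product of binomial-type coefficients in the basis $\{E_j^*\}$, while the right side involves maxima over subsets in the basis $\{E_j\}$; the only link is $(E_j^*,E_i)=-\delta_{ij}$, and you have not said how this produces a matching recursion. As written, part (1) has no proof.

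The actual argument in \cite{NSW} is not coefficient matching but goes through the counting function $\mathcal{M}(t)=\sum_{(l,I)}(-1)^{|I|}t^{w(l,I)}$ and its interpretation via Euler characteristics of the level sets $S_N$; this is exactly the tool (\cite[2.3.7]{NSW}) invoked in the proof of Theorem~\ref{thm:redZ}(1) here. Your outline for part (2) is closer in spirit to that method and to the proof of Theorem~\ref{thm:redZ}(2), which explicitly follows \cite[3.1.1]{NSW}, but it is too vague to stand on its own: the key step is constructing computation sequences (as in Lemma~\ref{ilem} and \ref{lem:con2}--\ref{lem:con3}) along which $\chi_{k_r}$ is monotone, which allows one to replace the infinite region $\{\bar l\not\geq l\}$ by a finite rectangle without changing the lattice cohomology, and hence without changing its Euler characteristic.
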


\noindent [In \cite{NSW} $w(k)$ is defined as $-(k^2+|\calj|)/8$ for  $k \in Char$.
If $k=k_r+2l$ then $w(k)=\chi_{k_r}(l)-(k_r^2+|\calj|)/8$. The last constant can be neglected in the
 sum of (1) since $\sum_{I\subseteq \mathcal{J}}(-1)^{|I|}=0$.
The sum in (2) is finite since $Z$ is supported in $\Z_{\geq 0}\langle E_j^*\rangle_j$ and
 all the entries of $E^*_j$ are strict positive, cf. (\ref{eq:POS}).]

Our next goal is to show that the series introduced above, but now with reduced
variables, still preserves all these properties; namely,  it can be recovered from the
reduced weighted cubes and it contains  all the information about the Seiberg--Witten invariant.

\subsection{Definition. The reduced zeta function.}
Recall that $\calj=\overline{\calj}\sqcup \calj^*$, where $\overline{\calj}$ is an
index set containing all the bad vertices. Let $\phi:L\to \overline{L}$ be the projection to the
$\overline{\calj}$--coordinates.
We also write $\overline{\bt}=\{t_j\}_{j\in\overline{\calj}}$ for the monomial variables
associated with $\overline{L}$, and
$\overline{\bt}^{\vasi}=\prod_{j\in\overline{\calj}} t_{j}^{i_j}$ for $\vasi=(i_1,\dots,i_{\nu})\in \overline{L}$.
Therefore,  $\bt^{l'}|_{t_j=1, \,\forall \,j\in \calj^*}=\overline{\bt}^{\phi(l')}$.
For any $h\in H$ set
\begin{equation*}
\overline Z_{h}(\overline{\bt}):=Z_{h}(\vast)\vert_{t_j=1, \, \forall j\in \mathcal{J}^*}.\end{equation*}
[We warn the reader that the reduced `non--decomposed' series $Z(\vast)\vert_{t_j=1, \,
\forall  j\in \mathcal{J}^*}$
usually does not contain sufficient information to reobtain each term
$\overline Z_{h}(\overline{\bt})$ ($h\in H$) from it.]

Fix one $\lk$, and write  $\overline Z_{\lk}(\overline{\bt})=
\sum_{\vasi\in \overline{L}} \overline p_{\vasi+\phi(l'_{[k]})}\overline{\bt}^{\vasi+\phi(l'_{[k]})}$.
Moreover, let $\overline \calS_{k}'$ be the projection of $\calS'\cap (l'_{[k]}+L)$.
Then $\overline Z_{k}(\overline{\bt})$ is supported on
$\overline \calS_{k}'$, and
for any $\vasi$ the sum  $ \sum_{\vasi'\ngeq \vasi}\overline p_{\vasi'+\phi(l'_{[k]})}$ is  finite
(properties inherited from $Z$).  Note that $\overline{\calS}:=\phi(\calS'\cap L)$ is a semigroup, and
$\overline \calS_{k}'$ is an $\overline{\calS}$--module.

\begin{theorem}\labelpar{thm:redZ}
Let $(\overline{L},\overline{w}[k])$ be as in \ref{bek:331}.
Then

(1) \begin{equation*}
  \overline Z_{\lk}(\overline{\bt})=
  \sum_{\vasi \in \overline L}\Big(\sum_{\overline{I}
  \subseteq \overline{\mathcal J}}(-1)^{|\overline{I}|+1}
  \overline{w}(\vasi,\overline{I})\Big)\overline{\bt}^{\vasi+\phi(l'_{[k]})}.
 \end{equation*}

(2)
 There exists $\vasi_0 \in \ocalS $ (characterized in the next Lemma
\ref{ilem})  such that for any $\vasi \in \vasi_0+\ocalS$
 $$ \sum_{\vasi'\ngeq \vasi}\overline p_{\vasi'+\phi(\lk)}=\overline{w}(\vasi)+
 eu(\bH^*(\overline{L},\overline{w}[k])).$$
Here $\overline{w}(\vasi)$ is a quasi--polynomial (cf. \ref{rem:QP}),
and   $eu(\bH^*(\overline{L},\overline{w}[k]))$ equals\,
 $eu( \bH^*(G,k_r))$.
\end{theorem}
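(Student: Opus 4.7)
My plan is to bootstrap Theorem \ref{NSW} from the original pair $(L,w)$ to the reduced pair $(\overline L,\overline w[k])$, using both the Reduction Theorem \ref{red} and the fiberwise analysis developed in its proof.

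For part (1), I start from Theorem \ref{NSW}(1) applied to $Z_{l'_{[k]}}(\bt)$ and specialize $t_j=1$ for $j\in\calj^*$. Regrouping the lattice sum by $\phi(l)=\vasi$ and splitting $I=\overline I\sqcup I^*$ with a factor $(-1)^{|\overline I|}$ pulled out, the statement of (1) reduces to the fiberwise identity
\begin{equation*}
\sum_{l\in\phi^{-1}(\vasi)\cap L}\,\sum_{I^*\subseteq\calj^*}(-1)^{|I^*|}\,w(l,\overline I\cup I^*)=\overline w(\vasi,\overline I)\qquad(\forall\,\overline I\subseteq\overline{\calj}).
\end{equation*}
To prove it, I write each integer weight via $w=m+\sum_{N\geq m}\mathbf 1_{w>N}$ for a convenient lower bound $m$; the constant $m$-part vanishes after the collapse $\sum_{I^*}(-1)^{|I^*|}=0$ (for $\calj^*\neq\emptyset$; the case $\nu=s$ is tautological). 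This reduces (1) to the level-by-level identity
\begin{equation*}
\sum_{l\in\phi^{-1}(\vasi)\cap L}\,\sum_{I^*\subseteq\calj^*}(-1)^{|I^*|}\,\mathbf 1_{(l,\overline I\cup I^*)\subseteq S_N}=\mathbf 1_{(\vasi,\overline I)\subseteq\overline S_N},
\end{equation*}
whose left side is precisely the combinatorial Euler characteristic of the cubical fiber $\psi(\phi^*_N(\vasi,\overline I))$. The chain \ref{cor:con}, \ref{th:nonempty}, together with the explicit contractions of \ref{ss:cont}, shows this fiber is non-empty and contractible when $(\vasi,\overline I)\subseteq\overline S_N$, and empty otherwise (the latter by Lemma \ref{lem:laufb}, which forces every $l\in\phi^{-1}(\vasi)$ to be above level $N$ whenever some vertex of $(\vasi,\overline I)$ is). This yields the identity and hence (1).

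Part (2) is then formally the image of Theorem \ref{NSW}(2) under the reduction. Substituting (1) into $\sum_{\vasi'\ngeq\vasi}\overline p_{\vasi'+\phi(l'_{[k]})}$ and performing an inclusion--exclusion/Abel summation coordinate by coordinate in $\overline{\calj}$, the cube sums telescope to produce $\overline w_0(\vasi)$ plus a correction that is independent of $\vasi$ once $\vasi$ lies sufficiently deep inside $\overline{\calS}$ (this is the role of the threshold $\vasi_0$ in Lemma \ref{ilem}; the required "deep enough" is controlled by the strict positivity of the entries of $E_j^*$, cf.\ (\ref{eq:POS})). That constant term is identified with $eu(\bH^*(\overline L,\overline w[k]))$ by the same bookkeeping as in the proof of Theorem \ref{NSW}(2): the telescoped quantity counts the total Betti contribution in $\overline S_N$ summed over $N$ with signs, which is precisely $-\m_{k_r}+\sum_q(-1)^q\rank\bH^q_{red}(\overline L,\overline w[k])$. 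The identification $eu(\bH^*(\overline L,\overline w[k]))=eu(\bH^*(G,k_r))$ is then immediate from the Reduction Theorem, since the graded $\Z[U]$-module isomorphism there preserves both $\m$ and each $\rank\bH^q_{red}$. Finally, the quasi-polynomial nature of $\overline w_0(\vasi)=\chi_{k_r}(x(\vasi))$ follows from Proposition \ref{propF1}: the recursion $\overline w_0(\vasi+1_j)-\overline w_0(\vasi)=1-(x(\vasi)+l'_{[k]},E_j)$ has piecewise-linear step in $\vasi$ (because $\vasi\mapsto x(\vasi)$ is piecewise-linear on the chambers of $\overline L_{\geq 0}$), so iterating once more produces a quasi-polynomial.

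The main obstacle I anticipate is the chain-level packaging of part (1). The Reduction Theorem itself asserts only a homotopy equivalence $S_N\simeq\overline S_N$ for each $N$, whereas here I need the matching of \emph{combinatorial Euler characteristics} of the individual fibers $\phi^*_N\ii$ at every $N$ simultaneously. Fortunately, the explicit simplicial retractions of \ref{ss:cont} — as opposed to an abstract Mayer--Vietoris argument — deliver exactly this, but one must verify carefully that the signs $(-1)^{|I^*|}$ (rather than the "total" $(-1)^{|I|}$) do assemble into the correct Euler characteristic of $\psi(\phi^*_N\ii)$ regarded as a cubical complex in the fiber direction. A secondary obstacle is pinpointing $\vasi_0$: this requires a chamber-wise analysis of the piecewise-linear map $\vasi\mapsto x(\vasi)$ via the recursion of \ref{propF1}, determining the region beyond which no further boundary corrections to the truncated sum appear.
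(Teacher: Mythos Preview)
Your approach is correct and essentially the same as the paper's. For part (1), both you and the paper start from Theorem~\ref{NSW}(1), specialize $t_j=1$ for $j\in\calj^*$, regroup, and reduce to the claim that the fiberwise alternating sum $T(\vasi,\overline I)$ equals $\overline w(\vasi,\overline I)$; the paper packages this via the counting function $\mathcal M(t)=\sum_\square(-1)^{\dim\square}t^{w^*(\square)}$ and the vanishing of $\bH^*_{red}(R^*,w^*)$, while you unwind the same computation as a level-by-level Euler-characteristic identity $\chi(\psi(\phi^*_N\ii))=\mathbf 1_{\ii\subseteq\overline S_N}$ --- these are equivalent formulations, and both rest on the fiber contractibility established in \ref{cor:con}--\ref{ss:cont} together with the min-max identification via Lemmas~\ref{lem:laufb} and~\ref{lem:tildeS}(a). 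One small point: you should restrict the infinite sum over $l^*\in L^*$ to a large rectangle $R^*$ (as the paper does) before decomposing $w$ level by level, since otherwise the constant $m$-term involves an infinite sum and the interchange $\sum_l\sum_N$ needs justification; for part~(2) your sketch is brief but correctly follows the combinatorial contraction argument of \cite[Theorem 3.1.1]{NSW} transported to $(\overline L,\overline w)$ via Lemma~\ref{ilem}, exactly as the paper does.
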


\begin{proof} (1)
We abbreviate $k_r$ by $k$ and $\overline{w}[k]$ by $\overline{w}$.
By \ref{eq:Z}(1) we get
$$\overline{Z}_{\lk}(\overline{\vast})=
\sum_{\vasi\in\overline{L}}\sum_{\overline{I}\subseteq \overline{\mathcal J}}(-1)^{|\overline{I}|+1}
\Big( \sum_{l^*\in L^*} \sum_{I^*\subseteq \mathcal{J}^*} (-1)^{|I^*|}w(x(\vasi)+l^*,\overline{I}\cup I^*)
 \Big)\,\overline{\vast}^{\vasi+\phi(l'_{[k]})},$$
 where $L^*\subset L$ is the sublattice of $\calj^*$--coordinates.
For a fixed $\vasi$ and $\overline{I}\subseteq \overline{\mathcal{J}}$, denote the coefficient in the last bracket by $T=T(\vasi,\overline{I})$.  Then we have to show that $T=\overline{w}(\vasi,\overline{I})$.

We define a weighted lattice $(L^*,w^*)$ as follows: the weight
 of a cube $(l^*,I^*)$ in $L^*$ is  $w^*(l^*,I^*):=w(x(\vasi)+l^*,\overline{I} \cup I^*)$
(hence it  depends on $(\vasi,\overline{I})$).
This is a compatible weight function on $L^*$ since $w$ is so, moreover
$T= \sum_{l^*\in L^*}\sum_{I^*\subseteq \mathcal{J}^*}(-1)^{|I^*|}w^*(l^*,I^*)$.

Note also that for any fixed ${\bf i}$ there are only finitely many
 $l^*\in L^*$ for which $x({\bf i})+l^*\in\calS'$
(use (\ref{eq:POS})).  Hence,  the sum in $T$ is finite. Therefore,
(cf. \ref{ss:2} and \ref{ss:3}),  we can find a `large' rectangle
$R^*=R^*(l_1^*,l_2^*)=\{l^*\in L^* \,:\, l_1^*\leq l^*\leq l_2^*\}$ with certain $l_1^*$ and $l_2^*$
such that
$$T=\sum_{l^*\in R^*}\sum_{I^*\subseteq \mathcal{J^*}}(-1)^{|I^*|}w^*(l^*,I^*) \ \ \ \mbox{and} \ \ \
\bH^*(L^*,w^*)=\bH^*(R^*,w^*).$$
Using the result and methods of \cite[Theorem 2.3.7]{NSW}, for the counting function
$\mathcal{M}(t):=\sum_{l^*\in R^*}\sum_{I^*\subseteq \mathcal{J^*}}(-1)^{|I^*|}t^{w^*(l^{*},I^*)}$ we have
$$\lim_{t\rightarrow 1}\frac{\mathcal{M}(t)-t^{\min( w^*\vert_{R^*})}}{1-t}=\sum_{q\geq 0}(-1)^q
\rank_\Z(\bH^q_{red}(R^*,w^*)).$$
The Reduction Theorem \ref{red} and its proof says that $(L^*,w^*)$ has vanishing reduced cohomology, in particularly
$\bH^q_{red}(R^*,w^*)=0$ for any $q\geq 0$. Hence
 $$T=\tiny{\frac{d\mathcal{M}(t)}{dt}}\big|_{t=1}=\min( w^*\vert_{R^*})=\min_{l^*\in L^*} \{w(x({\bf i})+l^*,\overline{I})\}=
 \min_{l^*\in L^*}\max_{\overline{J}\subseteq \overline{I}} \{\chi_k(x({\bf i})+l^*+E_{\overline{J}})\}.$$
By Lemma \ref{lem:laufb} $\chi_k(x({\bf i})+l^*+E_{\overline{J}})\geq \chi_k(x({\bf i}+1_{\overline{J}}))$, hence
\begin{equation}\label{eq:minmax}
\max_{\overline{J}\subseteq \overline{I}}
\chi_k(x({\bf i})+l^*+E_{\overline{J}})\geq
\max_{\overline{J}\subseteq \overline{I}}
\chi_k(x({\bf i}+1_{\overline{J}}))=\overline{w}({\bf i},\overline{I}).\end{equation}
But, by Lemma \ref{lem:tildeS}(a) (for notations see also \ref{ss:tilde}), the minimum
over $l^*$ of the left hand side is realized for $l^*=E_{\widetilde{\cals}}$ with equality
in (\ref{eq:minmax}),  hence
$T=\overline{w}({\bf i},\overline{I})$.

\vspace{2mm}

We start the proof of part (2) by the following lemma
 (the analogue for $(\overline{L},\overline{w})$
  of Lemmas \ref{lem:con2} and \ref{lem:i+I}),
which identifies $\vasi_0$.

\begin{lemma}\label{ilem}
(a) Fix $l+\lk\in \calS'$ and take the projection
$\vasi:=\phi(l)$. Then $x(\vasi)+\lk \in \calS'$, hence
$\overline{w}(\vasi+1_j)>\overline{w}(\vasi)$
for every $j\in \overline{\mathcal{J}}$.

(b) There exists $\widetilde{\vasi} \in  \overline \calS$ such that for any $\vasi\in \widetilde{\vasi}+
\overline \calS$ one has a
sequence $\{\vasi_n\}_{n\geq 0}\in\overline \calS$ with
\begin{itemize}
\item[(i)] $\vasi_0=\vasi$, $\vasi_{n+1}=\vasi_n+1_{j(n)}$ for certain
$j(n)\in \overline{\mathcal{J}}$, and all entries of $\vasi_n$ tend to infinity as
$n\rightarrow\infty$;
\vspace{1mm}
\item[(ii)] for any  $n$ and  $0\leq \vasi_n'\leq \vasi_n$ with the same  $j(n)$-th coefficients, one has
$$\overline{w}(\vasi_n'+1_{j(n)})>\overline{w}(\vasi_n').$$
\end{itemize}

\end{lemma}
\begin{proof} (a) Since $l+\lk$ satisfies conditions (a)-(b) of \ref{lemF1}
in the definition of $x(\vasi)$, by the minimality of $x(\vasi)$ we get
that $l-x(\vasi)$ is effective and is supported on $\calj^*$.
Hence, $(x(\vasi)+\lk,E_j)\leq (l+\lk,E_j)\leq 0$ for any $j\in \ocalj$.
The last inequality follows from  \ref{propF1}.

(b) The negative definiteness of the intersection from guarantees the existence of
$\widetilde{\vasi}$ with (i). For (ii) note that if $\vasi=\phi(l)$ as in (a), and $0\leq \vasi'\leq \vasi$,
such that their $j$-entries agree, then
automatically $\overline{w}(\vasi'+1_{j})>\overline{w}(\vasi')$. Indeed,
$x(\vasi)-x(\vasi') $ is effective and supported on $\calj\setminus j$, hence
 $(x(\vasi')+\lk,E_j)\leq (x(\vasi)+\lk,E_j)\leq 0$ and \ref{propF1} applies again.
\end{proof}

We fix an $\vasi$ as in Lemma \ref{ilem}(b). Then
similarly as in subsection \ref{ss:3}, one obtains
\begin{equation}
\bH^*(\overline{L},\overline{w})\cong \bH^*(R(0,\vasi),\overline{w}),
\end{equation}
where $R(0,\vasi)=\{\vasi'\in \overline{L} \, : \, 0\leq \vasi'\leq \vasi \}$.
In particularly, if we set
$$\cale(R(0,\vasi)):=\sum_{(\vasi',\overline{I})\subseteq R(0,\vasi)}(-1)^{|\overline{I}|+1}\overline{w}(\vasi',\overline{I})$$
(sum over all the cubes of $R(0,\vasi)$),
then \cite[Theorem 2.3.7]{NSW} ensures that
\begin{equation}\label{cale}
\cale(R(0,\vasi))=eu(\bH^*(R(0,\vasi),\overline{w})).
\end{equation}
In the sequel we follow closely the
proof of Theorem \ref{NSW}(2) from  \cite[Theorem 3.1.1]{NSW}).

We choose a computation sequence $\{\vasi_n\}_{n\geq 0}$ as in \ref{ilem}
and set $R':=\{ \vasi' \in \overline L \ : \vasi'\geq 0 \ \mbox{and} \ \exists \,
j\in \overline{\calj} \ \mbox{with} \
(\vasi'-\vasi)_j\leq 0\}$.  $R'$ is
not finite, but $R'\cap  \ocalS_{k}'$ is a finite set.  Fix $\widetilde n$ so that $R'\cap
\ocalS_{k}'\subseteq R(0,\vasi_{\widetilde n})$, and define
$R'(\widetilde n):=R'\cap R(0,\vasi_{\widetilde n})$, \
$\partial_1 R'(\widetilde n):=R'\cap R(\vasi,\vasi_{\widetilde n})$,
and
$$\partial_2 R'(\widetilde n):=\{\vasi'\in  R'(\widetilde n)\,:\,
\exists j\in\overline{\calj} \ \mbox{with } \ (\vasi'-\vasi_{\widetilde n})_j=0\}.$$
 Then by part (1) of the theorem we have
$$\sum_{\vasi'\ngeq \vasi}\overline p_{\vasi'+\phi(\lk)}=\cale(R'(\widetilde n))-\cale
(\partial_1 R'(\widetilde n)\cup \partial_2 R'(\widetilde n)).$$
The right hand side is simplified as follows.
First, notice that we may find $\widetilde n$ sufficiently high
in such a way, that if we choose a sequence
$\{{\bf j}_m\}_{m=0}^t$ from ${\bf j}_0=0$ to ${\bf j}_t=\vasi$
with ${\bf j}_{m+1}={\bf j}_m+1_{j(m)}$,
we have the following property:
\begin{center}
\noindent for every ${\bf j}' \in \partial_2 R'(\widetilde n)$ with ${\bf j}'\geq {\bf j}_m$ and
$({\bf j}')_{j(m)}=({\bf j}_m)_{j(m)}$ one has
$\overline w({\bf j}'+1_{j(m)})\leq \overline w({\bf j}')$.
\end{center}
Indeed, $(x({\bf j}')+\lk,E_{j(m)})$ is increasing in ${\bf j}'$ with fixed $j(m)$-th coefficient.
(Any  ${\bf j}' \in \partial_2 R'(\widetilde n)$
has `large' entries corresponding to coordinates
 $j$ when  $({\bf j}'-i_{\widetilde n})_j=0$, and `small' entry corresponding to $j(m)$.
 Hence, when we increase the $j(m)$-th  entry by one,
the positivity of the quantities $(x({\bf j}')+\lk,E_{j(m)})$ is guaranteed  by the presence of `large' entries.)

Therefore, using the sequence $\{{\bf j}_m\}$ and
\ref{lem:con3}, there exists a contraction of $\partial_2 R'(\widetilde n)$ to
$\partial_1 R'(\widetilde n)\cap \partial_2 R'(\widetilde n)$ along which $\overline w$ is
non--increasing.
Then similarly as in (\ref{cale}), one gets $\cale(\partial_2 R'(\widetilde n))=
\cale(\partial_1 R'(\widetilde n)\cap \partial_2 R'(\widetilde n))$,
hence $\cale (\partial_1 R'(\widetilde n)\cup \partial_2 R'(\widetilde n))=
\cale(\partial_1 R'(\widetilde n))$ too.

Next, we claim that $\cale(R'(\widetilde n))=\cale(R(0,\vasi))$. Indeed,
using induction on the sequence $\{\vasi_n\}_{0\leq n<\widetilde n}$,
it is enough to show that $\cale(R'(n))=\cale(R'(n+1))$. This follows from
 \ref{ilem}, since for all $\overline I$ containing $j(n)$ and each
$(\vasi',\overline I)\in R'(n+1)\setminus R'(n)$ we have
$$\overline \omega(\vasi',\overline I)=\overline \omega(\vasi'+1_{j(n)},\overline I\setminus j(n)).$$
This ensures a combinatorial cancelation in the sum $\cale(R'(n+1))$, or an isomorphism in the
 corresponding lattice cohomologies, which gives the expected
equality.

With the same procedure applying to $\partial_1 R'(\widetilde n))$ we deduce the equality
$\cale(\partial_1 R'(\widetilde n))=\cale(\partial_1 R'(0))=-\overline \omega(\vasi)$. Hence the
identity follows.
\end{proof}

\begin{remark}\label{rem:QP}
The fact that $\overline{w}(\vasi)$ is a quasi-polynomial can be seen as follows.
Choose  $l\in \calS'\cap L$, $l=(\overline{l},l^*)$, such that $(l,E_j)=0$ for any $j\in \calj^*$.
Then one checks that $x(\vasi+n\overline{l})=x(\vasi)+nl$ for any $n\in \Z_{\geq 0}$, hence
$\overline{w}(\vasi+n\overline{l})=\chi_{k_r}(x(\vasi)+nl)$ is a polynomial in $n$.
\end{remark}

\begin{example} \cite{OSZINV}\label{ex:Seifert}
In the reduced case, the expression $\overline{w}(\vasi)$ usually is a rather complicated
arithmetical quasi--polynomial. E.g., assume that $G$ is a star--shaped graph whose central vertex has
Euler decoration $b$ and the legs have Seifert invariants $(\omega_j,\alpha_j)_{j=1}^\ell$,
$0<\omega_j<\alpha_j$, ${\rm gcd}(\omega_j,\alpha_j)=1$, $\ell\geq 3$.
We fix the central vertex as the unique
bad vertex. Then, by Reduction Theorem, the lattice cohomology is completely determined by the
sequence $\{\overline{w}(i)\}_{i\geq 0}$.
Moreover, in the case of the canonical $spin^c$--structure, for any $i\geq 0$ one has
$$\overline{w}(i)=\sum_{0\leq k<i}N(k), \ \ \ \mbox{where} \ \
N(k):=1-kb-\sum_{j}\big\lceil \,k\omega_j/\alpha_j\,\big\rceil.$$
In this case $\overline{Z}_0(t)=\sum_{k\geq 0}\max\{0,  N(k)\}\,t^k$ and
$eu(\bH^*(G,can))=\sum_{k\geq 0}\max\{0, - N(k)\}$.
\end{example}

\section{Example}

Consider the following graph

\begin{center}
\begin{picture}(140,70)(80,20)
\put(110,60){\circle*{3}}
\put(140,60){\circle*{3}}
\put(170,60){\circle*{3}}
\put(200,60){\circle*{3}}
\put(80,60){\circle*{3}}
\put(50,60){\circle*{3}}
\put(230,60){\circle*{3}}
\put(260,60){\circle*{3}}
\put(50,60){\line(1,0){210}}
\put(80,60){\line(0,-1){30}}
\put(80,30){\circle*{3}}
\put(230,60){\line(0,-1){30}}
\put(230,30){\circle*{3}}
\put(50,70){\makebox(0,0){$-2$}}
\put(80,70){\makebox(0,0){$-1$}}
\put(110,70){\makebox(0,0){$-7$}}
\put(140,70){\makebox(0,0){$-3$}}
\put(170,70){\makebox(0,0){$-3$}}
\put(200,70){\makebox(0,0){$-7$}}
\put(230,70){\makebox(0,0){$-1$}}
\put(260,70){\makebox(0,0){$-2$}}
\put(95,30){\makebox(0,0){$-3$}}
\put(215,30){\makebox(0,0){$-3$}}
\end{picture}
\end{center}
and its associated negative definite plumbed 3--manifold $M$. Notice that $M$ can be realized as the
link of the Newton
non--degenerate hypersurface singularity with equation $x^{13}+y^{13}+x^2y^2+z^3=0$.
In particular, $k_{can}$ is integral, and the weight function
$\chi_{can}(l)=-(l,l+k_{can})/2$ is fixed by the symmetry $l\mapsto -l-k_{can}$.

In the sequel we will calculate the lattice cohomology of $M$ 
associated with $k_{can}$. 
We can choose the two nodes for bad vertices. Then Reduction Theorem \ref{red} implies that
$\bH^*(M,k_{can})\cong \bH^*(\R_{\geq0}^2,\overline w)$, where $\overline w(i,j):=\chi_{can}(x(i,j))$ for any $(i,j)\in \Z_{\geq0}^2$. Using
Proposition \ref{propF1}, one can calculate the expressions
$$\overline w(i+1,j)-\overline w(i,j)=1+i-\lceil (53i+j)/351\rceil-\lceil i/2\rceil-\lceil i/3\rceil$$
$$\overline w(i,j+1)-\overline w(i,j)=1+j-\lceil (i+53j)/351\rceil-\lceil j/2\rceil-\lceil j/3\rceil.$$
Moreover, $R(0,(14,14))=\{(i,j)\in \R_{\geq0}^2 \ : \ (i,j)\leq (14,14)\}$ contains all the lattice cohomological
information. Indeed, one can prove that $L$ can be contracted to $R(0,-\K)$ along which $\chi_{can}$ is non--
increasing (see e.g. \ref{propF2} and \ref{lem:con2}). This also implies that after reduction enough to look at
$\phi(R(0,-\K))=R(0,(14,14))$, since $\phi(-\K)=(14,14)$.

We consider the picture below, illustrating the weighted lattice structure of $R(0,(14,14))$. (The lattice
point $(0,0)$ is at the lower left corner, $i$ increases in the horizontal direction, while $j$ increases in the
vertical one.)

\begin{center}
\begin{picture}(0,160)(120,-5)
\put(15,0){\small$0$}\put(30,0){\small$1$}\put(45,0){\small$0$}\put(60,0){\small$0$}\put(75,0){\small$0$}
\put(90,0){\small$0$}\put(105,0){\small$0$}\put(120,0){\small$1$}\put(135,0){\small$0$}\put(150,0){\small$0$}
\put(165,0){\small$0$}\put(180,0){\small$0$}\put(195,0){\small$0$}\put(210,0){\small$1$}\put(225,0){\small$1$}
\put(12,-2){\line(1,0){11}}\put(23,-2){\line(0,1){10}}\put(23,8){\line(-1,0){11}}\put(12,8){\line(0,-1){10}}

\put(15,10){\small$1$}\put(30,10){\small$1$}\put(45,10){\small$0$}\put(60,10){\small$0$}\put(75,10){\small$0$}
\put(90,10){\small$0$}\put(105,10){\small$0$}\put(120,10){\small$1$}\put(135,10){\small$0$}\put(150,10){\small$0$}
\put(165,10){\small$0$}\put(180,10){\small$0$}\put(195,10){\small$0$}\put(210,10){\small$1$}\put(225,10){\small$1$}

\put(15,20){\small$0$}\put(30,20){\small$0$}\put(38,20){\small$-1$}\put(53,20){\small$-1$}\put(68,20){\small$-1$}
\put(83,20){\small$-1$}\put(98,20){\small$-1$}\put(120,20){\small$0$}\put(128,20){\small$-1$}\put(143,20){\small$-1$}
\put(158,20){\small$-1$}\put(173,20){\small$-1$}\put(188,20){\small$-1$}\put(210,20){\small$0$}\put(225,20){\small$0$}
\put(36,18){\line(1,0){76}}\put(112,18){\line(0,1){50}}\put(112,68){\line(-1,0){76}}\put(36,68){\line(0,-1){50}}
\put(126,18){\line(1,0){76}}\put(202,18){\line(0,1){50}}\put(202,68){\line(-1,0){76}}\put(126,68){\line(0,-1){50}}

\put(15,30){\small$0$}\put(30,30){\small$0$}\put(38,30){\small$-1$}\put(53,30){\small$-1$}\put(68,30){\small$-1$}
\put(83,30){\small$-1$}\put(98,30){\small$-1$}\put(120,30){\small$0$}\put(128,30){\small$-1$}\put(143,30){\small$-1$}
\put(158,30){\small$-1$}\put(173,30){\small$-1$}\put(188,30){\small$-1$}\put(210,30){\small$0$}\put(225,30){\small$0$}

\put(15,40){\small$0$}\put(30,40){\small$0$}\put(38,40){\small$-1$}\put(53,40){\small$-1$}\put(68,40){\small$-1$}
\put(83,40){\small$-1$}\put(98,40){\small$-1$}\put(120,40){\small$0$}\put(128,40){\small$-1$}\put(143,40){\small$-1$}
\put(158,40){\small$-1$}\put(173,40){\small$-1$}\put(188,40){\small$-1$}\put(210,40){\small$0$}\put(225,40){\small$0$}

\put(15,50){\small$0$}\put(30,50){\small$0$}\put(38,50){\small$-1$}\put(53,50){\small$-1$}\put(68,50){\small$-1$}
\put(83,50){\small$-1$}\put(98,50){\small$-1$}\put(120,50){\small$0$}\put(128,50){\small$-1$}\put(143,50){\small$-1$}
\put(158,50){\small$-1$}\put(173,50){\small$-1$}\put(188,50){\small$-1$}\put(210,50){\small$0$}\put(225,50){\small$0$}

\put(15,60){\small$0$}\put(30,60){\small$0$}\put(38,60){\small$-1$}\put(53,60){\small$-1$}\put(68,60){\small$-1$}
\put(83,60){\small$-1$}\put(98,60){\small$-1$}\put(120,60){\small$0$}\put(128,60){\small$-1$}\put(143,60){\small$-1$}
\put(158,60){\small$-1$}\put(173,60){\small$-1$}\put(188,60){\small$-1$}\put(210,60){\small$0$}\put(225,60){\small$0$}

\put(15,70){\small$1$}\put(30,70){\small$1$}\put(45,70){\small$0$}\put(60,70){\small$0$}\put(75,70){\small$0$}
\put(90,70){\small$0$}\put(105,70){\small$0$}\put(120,70){\small$1$}\put(135,70){\small$0$}\put(150,70){\small$0$}
\put(165,70){\small$0$}\put(180,70){\small$0$}\put(195,70){\small$0$}\put(210,70){\small$1$}\put(225,70){\small$1$}
\put(122,73){\circle{12}}

\put(15,80){\small$0$}\put(30,80){\small$0$}\put(38,80){\small$-1$}\put(53,80){\small$-1$}\put(68,80){\small$-1$}
\put(83,80){\small$-1$}\put(98,80){\small$-1$}\put(120,80){\small$0$}\put(128,80){\small$-1$}\put(143,80){\small$-1$}
\put(158,80){\small$-1$}\put(173,80){\small$-1$}\put(188,80){\small$-1$}\put(210,80){\small$0$}\put(225,80){\small$0$}
\put(36,78){\line(1,0){76}}\put(112,78){\line(0,1){50}}\put(112,128){\line(-1,0){76}}\put(36,128){\line(0,-1){50}}
\put(126,78){\line(1,0){76}}\put(202,78){\line(0,1){50}}\put(202,128){\line(-1,0){76}}\put(126,128){\line(0,-1){50}}

\put(15,90){\small$0$}\put(30,90){\small$0$}\put(38,90){\small$-1$}\put(53,90){\small$-1$}\put(68,90){\small$-1$}
\put(83,90){\small$-1$}\put(98,90){\small$-1$}\put(120,90){\small$0$}\put(128,90){\small$-1$}\put(143,90){\small$-1$}
\put(158,90){\small$-1$}\put(173,90){\small$-1$}\put(188,90){\small$-1$}\put(210,90){\small$0$}\put(225,90){\small$0$}

\put(15,100){\small$0$}\put(30,100){\small$0$}\put(38,100){\small$-1$}\put(53,100){\small$-1$}\put(68,100){\small$-1$}
\put(83,100){\small$-1$}\put(98,100){\small$-1$}\put(120,100){\small$0$}\put(128,100){\small$-1$}\put(143,100){\small$-1$}
\put(158,100){\small$-1$}\put(173,100){\small$-1$}\put(188,100){\small$-1$}\put(210,100){\small$0$}\put(225,100){\small$0$}

\put(15,110){\small$0$}\put(30,110){\small$0$}\put(38,110){\small$-1$}\put(53,110){\small$-1$}\put(68,110){\small$-1$}
\put(83,110){\small$-1$}\put(98,110){\small$-1$}\put(120,110){\small$0$}\put(128,110){\small$-1$}\put(143,110){\small$-1$}
\put(158,110){\small$-1$}\put(173,110){\small$-1$}\put(188,110){\small$-1$}\put(210,110){\small$0$}\put(225,110){\small$0$}

\put(15,120){\small$0$}\put(30,120){\small$0$}\put(38,120){\small$-1$}\put(53,120){\small$-1$}\put(68,120){\small$-1$}
\put(83,120){\small$-1$}\put(98,120){\small$-1$}\put(120,120){\small$0$}\put(128,120){\small$-1$}\put(143,120){\small$-1$}
\put(158,120){\small$-1$}\put(173,120){\small$-1$}\put(188,120){\small$-1$}\put(210,120){\small$0$}\put(225,120){\small$0$}

\put(15,130){\small$1$}\put(30,130){\small$1$}\put(45,130){\small$0$}\put(60,130){\small$0$}\put(75,130){\small$0$}
\put(90,130){\small$0$}\put(105,130){\small$0$}\put(120,130){\small$1$}\put(135,130){\small$0$}\put(150,130){\small$0$}
\put(165,130){\small$0$}\put(180,130){\small$0$}\put(195,130){\small$0$}\put(210,130){\small$1$}\put(225,130){\small$1$}

\put(15,140){\small$1$}\put(30,140){\small$1$}\put(45,140){\small$0$}\put(60,140){\small$0$}\put(75,140){\small$0$}
\put(90,140){\small$0$}\put(105,140){\small$0$}\put(120,140){\small$1$}\put(135,140){\small$0$}\put(150,140){\small$0$}
\put(165,140){\small$0$}\put(180,140){\small$0$}\put(195,140){\small$0$}\put(210,140){\small$1$}\put(225,140){\small$0$}
\put(222,138){\line(1,0){11}}\put(233,138){\line(0,1){10}}\put(233,148){\line(-1,0){11}}\put(222,148){\line(0,-1){10}}
\end{picture}
\end{center}
Then, using \ref{HS}, one can read off the lattice cohomology from the picture: the big frames illustrate
the generators of $H^0(S_{-1},\Z)$, the small frames mark the generators of $H^0(S_{0},\Z)$
appeared at degree $0$ and the circle shows the generator of $H^1(S_{0},\Z)$. Hence,
$$\bH^0(M,k_{can})=\calt^+_{-2}\oplus \calt^3_{-2}(1)\oplus \calt^2_{0}(1) \ \ \ \mbox{and} \ \ \
\bH^1(M,k_{can})=\calt_0(1).$$

\end{document}